\def\can{\mathrm{can}}
\def\cd{\operatorname{cd}}
\def\pt{\mathrm{pt}}
\def\BvS{Buchweitz and van Straten }
\def\BvSns{Buchweitz and van Straten}
\def\and{ \, \, \mathrm{and} \, \,   }
\def\alg{{\mathrm{alg}}}
\def\link{\lk}
\def\lk{\operatorname{link}}
\def\o{\overline}
\def\id{{\text{id}}}
\def\hen{{\mathrm{hen}}}
\def\sh{{\mathrm{sh}}}
\def\ov{\overline}
\def\a{\alpha}
\def\b{\beta}
\def\g{\gamma}
\newcommand{\pd}[2]{{\scriptstyle{\frac{\partial #1}{\partial #2}}}}
\newcommand{\adi}[1]{[{\scriptstyle{\frac{1}{#1}}}]}
\def\len{\length}
\newcommand{\length}{\operatorname{length}}
\newcommand{\xra}[1]{\xrightarrow{#1}}
\newcommand{\xla}[1]{\xleftarrow{#1}}
\newcommand{\ai}[1]{{\left[ {\scriptstyle \frac{1}{#1}} \right]}}
\def\ob{\operatorname{ob}}
\def\cube#1#2#3#4#5#6#7#8{
& #5 \ar[rr] \ar[dl] \ar@{-}[d] && #6 \ar[dd] \ar[dl] \\
#1 \ar[rr] \ar[dd]  & \ar[d] & #2 \ar[dd] \\
& #7 \ar@{-}[r] \ar[dl] & \ar[r] & #8 \ar[dl] \\
#3 \ar[rr] && #4 \\
}
\def\ip#1{\left< #1 \right>}
\def\ad#1{{\scriptstyle \left[ \frac{1}{#1} \right]}}
\def\chr{\operatorname{char}}
\def\sm{\setminus}
\def\etale{\'etale~}
\def\image{\operatorname{im}}
\def\im{\image}
\def\ker{\operatorname{ker}}
\def\cP{\mathcal P}
\def\cE{\mathcal E}
\def\cA{\mathcal A}
\def\cF{\mathcal F}
\def\cG{\mathcal G}
\def\cO{\mathcal O}
\def\cS{\mathcal S}
\def\cM{\mathcal M}
\def\top{\mathrm{top}}
\def\cK{\mathcal K}
\def\cKU{{\mathcal{KU}}}
\def\Ktop{K^{\mathrm{top}}}
\def\Gtop{G^{\mathrm{top}}}
\def\coker{\operatorname{coker}}
\def\dm{\operatorname{dim}}
\def\Tor{\operatorname{Tor}}
\def\Mat{\operatorname{Mat}}
\def\Spec{\operatorname{Spec}}
\def\mSpec{\operatorname{mSpec}}
\def\map#1{{\buildrel #1 \over \lra}} 
\def\lra{\longrightarrow}
\def\into{\rightarrowtail}
\def\onto{\twoheadrightarrow}
\def\o#1{{\overline{#1}}}
\def\t#1{{\widetilde{#1}}}
\DeclareMathOperator*{\colim}{colim}
\newcommand{\A}{\mathbb{A}}
\newcommand{\C}{\mathbb{C}}
\newcommand{\Z}{\mathbb{Z}}
\newcommand{\N}{\mathbb{N}}
\newcommand{\fp}{{\mathfrak p}}
\newcommand{\fm}{{\mathfrak m}}
\newcommand{\fn}{{\mathfrak n}}
\newcommand{\fq}{{\mathfrak q}}
\numberwithin{equation}{section}
\theoremstyle{plain} 
\newtheorem{thm}[equation]{Theorem}
\newtheorem{cor}[equation]{Corollary}
\newtheorem{lem}[equation]{Lemma}
\newtheorem{prop}[equation]{Proposition}
\newtheorem{assumptions}[equation]{Assumptions}
\newtheorem{ex}[equation]{Example}
\newtheorem{defn}[equation]{Definition}
\theoremstyle{remark}
\newtheorem{rem}[equation]{Remark}
\newtheorem*{ack}{Acknowledgements}
\def\smm{\setminus \fm}
\def\del{\partial}
\def\sm{\setminus}
\def\et{\text{\'et}}
\def\ttheta{\widetilde{\theta}}
\def\sstar{\star_{\text{Sh}}}
\def\gstar{\star_{\text{Gr}}}
\def\mstar{\star_{\text{Mi}}}
\begin{document}

\title{On the vanishing of Hochster's $\theta$ invariant}

\date{\today}

\author{Mark E. Walker} 
\address{Department of Mathematics, University
  of Nebraska, Lincoln, NE 68588} 
\email{mwalker5@math.unl.edu}
\thanks{The author was supported in part by National Science Foundation Award DMS-0966600}

\begin{abstract} 
Hochster's theta invariant is defined for a pair of finitely generated modules on a hypersurface ring having only an isolated singularity. Up to a sign, it
agrees with the Euler invariant of a pair of matrix factorizations.

Working over the complex numbers, 
Buchweitz and van Straten have established an interesting connection between Hochster's $\theta$-invariant  and the classical linking form on the link of the singularity.
In particular, they establish the vanishing of the $\theta$ invariant if the hypersurface is even-dimensional by exploiting the fact that the (reduced) cohomology of the
Milnor fiber is concentrated in  odd degrees in this situation.

In this paper, we give purely algebraic versions of some of these results. In particular, we establish the vanishing of the $\theta$ invariant for isolated
hypersurface singularities of odd dimension in characteristic $p > 0$, under some mild extra assumptions. This confirms, in a large number of cases,  a
conjecture of Hailong Dao.
\end{abstract}


\maketitle

\tableofcontents

\section{Introduction}
In this paper, a {\em hypersurface} will refer to a ring $R$ that can be
expressed as a quotient of a regular (Noetherian) ring $Q$ by a
non-zero-divisor $f$; i.e., $R = Q/f$. (We do not require $Q$ to be local.)
If $M,N$ are finitely generated
$R$-modules, then using the standard long exact sequence
$$
\cdots \to \Tor_Q^{i-1}(M,N) \to \Tor_R^{i}(M,N)  \to
\Tor_R^{i-2}(M,N) \to \Tor_Q^{i-2}(M,N) \to \cdots
$$
and the fact that $\Tor_Q^j(M,N) = 0$ for $j \gg 0$ since $Q$ is regular, we conclude that $\Tor_R^*(M,N)$ is 
eventually two-periodic: there is an isomorphism
$$
\Tor_R^i(M,N) \xra{\cong} \Tor_R^{i+2}(M,N), \, \text{ for $i \gg 0$.}
$$

If we assume, in addition, that $\Tor_R^i(M,N)$ has finite length for all $i \gg
0$, then Hochster's {\em theta invariant} \cite{Hochster}
of the pair $(M,N)$ is defined to be the integer
$$
\theta^R(M,N) = \length \Tor_R^{2i}(M,N) - \length \Tor_R^{2i+1}(M,N),
\, \text{ for $i \gg 0$.}
$$
The modules $\Tor^R_i(M,N)$ will have finite length for $i \gg 0$ if, for example, there exists a
maximal ideal $\fm$ of $R$ such that one of the modules, say $N$, is locally of finite 
projective dimension on the
punctured spectrum 
$\Spec(R) \setminus \fm$. In this situation, $N$
determines a coherent sheaf on the quasi-affine scheme
$\Spec(R) \smm$ that admits a finite resolution by locally free coherent
sheaves, and hence $N$ determines a class in $[N] \in K_0(\Spec(R) \smm)$. The module
$M$, of course, determines a class $[M] \in G_0(R)$. 
Hochster proves \cite[1.2]{Hochster} that $\theta$ is bi-additive for such
pairs of modules and hence determines a pairing
$$
G_0(R) \times K_0(\Spec R \setminus \fm) \to \Z.
$$
Moreover, $\theta(R/\fm, -)$ is identically zero
and, since $G_0(R) \to G_0(\Spec(R) \smm)$ is surjective with kernel
generated by $[R/\fm]$, we obtain an induced pairing
\begin{equation} \label{deftheta}
\theta = \theta_{(Q,\fm, f)}: G_0(\Spec(R) \smm) \times K_0(\Spec(R) \setminus \fm) \to \Z.
\end{equation}
We will refer to this as {\em Hochster's $\theta$ pairing associated to the data $(Q,f,\fm)$}.

In the paper \cite{BvS}, Buchweitz and van Straten relate Hochster's $\theta$
pairing for isolated hypersurface singularities of the form $R =
\C\{x_0, \dots, x_n\}/(f)$, where $\C\{x_0, \dots, x_n\}$ denotes the
ring of convergent power series,
to the linking form on the link of the singularity. Using this
relationship, 
they also prove the $\theta$ pairing vanishes when $\dm(R)$ is even.

The goal of this paper is to give a purely algebraic interpretation of some of the
results of \BvSns, ones which are also valid in characteristic $p>0$. In particular,
we obtain the vanishing of $\theta$ for isolated hypersurface singularities of even dimension in
all characteristics for a large number of rings, confirming in many
cases
a conjecture of H.~Dao \cite[3.15]{DaoDecent}. We refer the reader to Corollaries \ref{CorMain} and \ref{Cor114} for the most general statements, but an
important special case of our results is given by the following Theorem: 

\begin{thm} \label{introthm}
Let $k$ be a perfect field, $Q$ a finitely generated and regular $k$-algebra, and $f \in Q$ a non-zero-divisor.
Assume the associated morphism of affine varieties 
$$
f: \Spec(Q) \to \A^1_k
$$
has only isolated singularities.
If $n = \dm(Q/f)$ is even, then  $\theta^R(M,N) = 0$ for all finitely generated $R$-modules $M$ and $N$.
\end{thm}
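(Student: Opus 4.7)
The plan is to recast Hochster's $\theta$ as an Euler pairing on matrix factorizations of $f$, push it forward via a Chern character into a $\Z/2$-graded ``singularity cohomology,'' and then extract the vanishing from the parity of $n$ combined with the intrinsic symmetry of $\theta$. As a preliminary reduction, note that $f \colon \Spec(Q) \to \A^1_k$ has only isolated singularities, so the singular locus of $R = Q/f$ consists of finitely many closed points; by additivity of $\theta$ it suffices to prove the vanishing after localizing at each. Since $k$ is perfect, passage to the Henselization $Q_\fm^h$ (or the completion $\widehat{Q}_\fm$) is harmless, and I may therefore assume $(Q,\fm)$ is a regular local $k$-algebra with $\fm$ the unique singular point of $R$; in this setting $\Tor^R_i(M,N)$ has finite length for $i \gg 0$ and $\theta$ is well-defined.

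Invoking Eisenbud's theorem, the minimal free resolutions of $M$ and $N$ over $R$ are eventually $2$-periodic and are governed in the stable tail by matrix factorizations $P_M, P_N$ of $f$ over $Q$. Consequently
$$
\theta(M,N) \;=\; \length H_0(P_M \otimes_R N) \,-\, \length H_1(P_M \otimes_R N),
$$
so $\theta$ is the Euler characteristic of a $\Z/2$-graded complex and descends to a bilinear Euler pairing on the Grothendieck group $K_0$ of the triangulated category $[MF(Q,f)]$ of matrix factorizations with finite-length support at $\fm$.

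Next I would invoke or construct a Chern character $\ch \colon K_0([MF(Q,f)]) \to H^{\bullet}_{\sing}(Q,f)$ valued in a $\Z/2$-graded ``singularity cohomology'' for $f$. Natural candidates are the periodic cyclic homology of the dg category $MF(Q,f)$, which by an HKR-type calculation reduces to Koszul cohomology of $df$ over $Q$ (and in the isolated case is essentially the Jacobian algebra), or an algebraic analogue of vanishing cycles for $f$ valid in arbitrary characteristic. For an isolated singularity with $n = \dm(Q/f)$, this cohomology is finite dimensional and concentrated in a single cohomological degree $n$, hence entirely in $\Z/2$-parity $n \bmod 2$. I would verify that $\theta$ corresponds under $\ch$ to a natural trace/residue pairing on $H^{\bullet}_{\sing}$.

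The main obstacle is the final parity step. The strategy combines (a) the symmetry $\theta(M,N) = \theta(N,M)$, inherited from the symmetry of $\Tor_*^R$, with (b) a parity-dependent sign in the cohomological pairing that makes it skew-symmetric precisely when $n$ is even --- reflecting the shift between $\Hom$ and $\otimes$ in the matrix-factorization formalism, or equivalently the sign of the Seifert/linking form on a Milnor fiber of complex dimension $n$. Taken together, (a) and (b) force $\theta$ to be simultaneously symmetric and skew-symmetric, hence zero (after inverting $2$, with a separate argument at $p = 2$ if necessary). The delicate technical work is to make this sign compatibility precise and to construct the Chern character purely algebraically over an arbitrary perfect field, likely via a trace formalism on the dg-enhanced matrix factorization category or via a direct computation on the $2$-periodic complexes themselves.
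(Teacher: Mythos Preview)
Your strategy is genuinely different from the paper's, and in characteristic zero it is close in spirit to the approach of Polishchuk--Vaintrob (via Chern characters to Hochschild homology of the matrix-factorization category) or of Moore--Piepmeyer--Spiroff--Walker (via Chern characters to Chow theory in the graded case). The paper, by contrast, never builds a Chern character or invokes any symmetry/skew-symmetry dichotomy. It reinterprets $\theta$ as a pairing $\ttheta(\alpha,\gamma)=\partial'(\partial(\alpha\cup[f])\cap\gamma)$ in algebraic $K$-theory, shows this factors through a specialization map landing in the $K$-theory (with $\Z/\ell$ coefficients, Bott-inverted) of the algebraic Milnor fiber $Q^{\hen}_\fm\otimes_V\overline{F}$, and then appeals to the Rosenschon--{\O}stv{\ae}r spectral sequence together with Illusie's theorem on the \'etale cohomology of the algebraic Milnor fiber to deduce that $\Ktop_1$ of this fiber vanishes when $n$ is even. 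The vanishing of $\theta$ then follows for infinitely many primes $\ell$, hence integrally.

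The serious gap in your proposal is precisely where you place ``the delicate technical work'': in positive characteristic, the Chern character into periodic cyclic homology and the HKR-type identification with Koszul cohomology of $df$ are not available in the form you need. The Jacobian/Tjurina description of the target and its concentration in a single degree are characteristic-zero phenomena; they rely on differential-form calculus that breaks down when $\chr(k)=p>0$. Your fallback suggestion, ``an algebraic analogue of vanishing cycles valid in arbitrary characteristic,'' is exactly what the paper supplies via \'etale cohomology and Illusie's theorem---but then one must replace the de Rham/HKR Chern character with an \'etale one and carry the $\theta$-pairing through \'etale $K$-theory, which is the entire content of the paper's Sections 3--7. Absent that machinery, your steps (3)--(4) do not go through over a perfect field of positive characteristic, and the final parity argument has nothing to stand on. The initial reduction to a local (Henselian) situation is correct and shared with the paper.
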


The Theorem may be extended easily to allow for localizations of smooth algebras, and thus justifies examples such as the following:

\begin{ex} \label{IntroEx} Let $k$ be a perfect field, let $f \in k[x_0, \dots, x_n]$ be a polynomial in $n+1$ variables contained in 
$\fm = \langle x_0, \dots, x_n \rangle$, and 
assume
$\ip{\pd{f}{x_0}, \dots, \pd{f}{x_n}}_\fm$
is an $\fm$-primary ideal of $k[x_0, \dots, x_n]_\fm$.
Then $\theta^{R}(M,N) = 0$, where  $R := k[x_0, \dots, x_n]_\fm/f$, 
for all pairs of finitely generated $R$-modules.
\end{ex}

One technical aspect of this paper may be of independent interest: in
Section \ref{sec:Sherman}, we prove a slightly weakened form of a
conjecture of C. Sherman \cite[\S2]{ShermanConnII} concerning his ``star pairing'' in algebraic $K$-theory.

\begin{ack} I am grateful to Michael Brown, Ragnar Buchweitz, Jesse Burke, Olgur Celikbas, Hailong Dao, Dan Grayson, Paul Arne {\O}stv{\ae}r, Andreas
  Rosenschon,  Clayton Sherman, and Duco van Straten for conversations about the topics of this paper.
\end{ack}

\section{Overview}
We give an overview of this paper by comparing the details we use  with those occurring in the work of 
 Buchweitz and van Straten.

\subsection{The Milnor fibration}
Given a power
series $f \in \C\{x_0, \dots, x_n\}$ with positive radius of convergence, we interpret $f$ as defining a holomorphic function defined on 
an open  neighborhood of $0$ in $\C^{n+1}$.
Let $\o{B}$, the {\em Milnor ball}, be a closed ball of radius $\epsilon$ centered at the 
origin of $\C^{n+1}$
and let $D$ be an open disk of radius $\delta$ centered at the origin in the complex plane 
$\C$. Here, $\epsilon$ is chosen first and to be sufficiently small, and $\delta$ is chosen to be sufficiently smaller than $\epsilon$: $0 < \delta \ll \epsilon
\ll 1$.
Set $X = f^{-1}(D) \cap \o{B}$ and write also $f$ for the restricted function $f: X \to D$.

Let us assume that the fiber $X_0$ of $f$ over $0 \in D$ is an
isolated singularity at the origin. It follows that $X_0$ is homeomorphic to the
cone over the {\em link} $L$ of the singularity, defined as 
$L := X_0
\cap S$, where $S := \partial \o{B} \cong S^{2n+1}$ is the 
{\em Milnor sphere}.
The link $L$ is a smooth orientable  manifold of real dimension
$2n-1$.

The map away from the singular locus,
$$
X^* := (X \setminus X_0) \to 
(D \setminus 0) =: D^*
$$
induced by $f$ 
is a fibration, called the {\em Milnor
  fibration}. For any $t \ne 0$ in $D$, the fiber $X_t$ of
$f$ over $t$ is a smooth manifold with boundary of (real) dimension $2n$. Up to
diffeomorphism, $X_t$ is 
 independent of $t$, and it is called
the {\em Milnor fiber} of the singularity. 
A key result
of Milnor \cite{MilnorFiber} gives that the Milnor fiber has the homotopy type of a bouquet of
$n$-dimensional spheres; the number of spheres is the {\em Milnor number}, often written as $\mu$. 
In particular, the reduced singular cohomology of the Milnor fiber, 
$\tilde{H}^*(X_t)$, is a free abelian group of rank $\mu$ concentrated in degree
$n$.

The restriction of $f$ to $S \cap X$ is a fibration and thus, up to diffeomorphism, we may identify the boundary of the Milnor fiber,
$\partial X_t = \left(f|_{X \cap S}\right)^{-1}(t)$, with 
the link $L = \left(f|_{X \cap S}\right)^{-1}(0)$. In particular, for each $t \in D^*$ there is a continuous map
$$
\rho_t: L \to X^*
$$
given by composing the diffeomorphism  $L \cong \partial X_t$  with the inclusion $\partial 
X_t \subseteq X^*$.

\subsection{The vanishing of the pairing when $n$ is even}
With the above notation set, we sketch the proof of \BvS for the vanishing of $\theta$ when 
$n$ is even.

Associated to $MCM$ $\C\{x_0, \dots, x_n\}/f$-modules $M$ and $N$, \BvS  associate 
classes in topological $K$-theory:
$$
\alpha(M) \in K^1_\top(X^*)
\and [N]_\top \in K^0_\top(L).
$$
The class $[N]_\top$ is given by the evident restriction of the coherent sheaf associated to $N$ to $L$ and the class 
$\alpha(M)$
is built from a {\em matrix factorization} representation of $M$.
They prove \cite[4.2]{BvS} that
$$
\theta(M,N) = \chi_\top(\rho_t^*(\alpha(M))  \cup [N]_\top).
$$
Here, $\rho_t^*: K^1_\top(X^*) \to K^1_\top(L)$ is the map induced by the map 
$\rho_t: L \into X^*$ defined above, $\cup$ is the product operation for the ring $K_\top^*$, and 
$\chi_\top: K^1_\top(L) \to K^0_\top(\pt) \cong \Z$ is induced by
push-forward. (Recall $L$ is odd dimensional and so $\chi_\top$ switches the parity of 
degrees.)

Notice that $\rho_t^*$ factors through $K^1_\top(X_t)$, since $\rho_t$ factors through $X_t$ 
by its very construction.
When $n$ is even, we have $K^1_\top(X_t) = 0$, since $X_t$ is a bouquet of $n$-dimensional 
spheres by Milnor's Theorem \cite{MilnorFiber}. 
It follows that $\rho_t^*(\alpha(M)) = 0$ and hence
$$
\theta(M,N) = 
\chi_\top(\rho_t^*(\alpha(M))  \cup [N]_\top) = 0.
$$

\subsection{The algebraic analogue of the Milnor fibration}
The algebraic analogue of the Milnor fibration is constructed as follows:
Assume $V$ is a Henselian dvr with algebraically closed residue field
$k$ and field of fractions $F$. For example, $V$ could be $k[[t]]$ with $k = \overline{k}$.
The affine scheme $\Spec(V)$ is the analogue of $D$, a small open disk in the complex plane. 
Let $Q$ be a flat $V$-algebra of finite type and
assume the associated 
morphism of affine schemes
$$
\Spec(Q) \to \Spec(V)
$$
is smooth except at a single point $\fm \in \Spec(Q)$ necessarily belonging to the closed fiber. Let 
$Q^\hen_\fm$ denote the Henselization of $Q$ at $\fm$. The affine scheme $\Spec(Q^\hen_\fm)$ is the analogue of $X = f^{-1}(D) \cap \overline{B}$ in the notation above.
Thus the morphism 
$$
X^\alg := \Spec(Q^\hen_\fm) \to \Spec(V) =: D^\alg
$$
is the algebraic analogue of the analytic map $f: X \to D$ considered above. 
The generic fiber of $X^\alg \to D^\alg$, 
$$
X^*_\alg := \Spec(Q^\hen_\fm \otimes_V F) \to \Spec(F) =: D^*_\alg,
$$
is the analogue of the Milnor fibration, and the geometric generic fiber,
$$
X_t^\alg := \Spec(Q^\hen_\fm \otimes_V \o{F}),
$$
where $\o{F}$ is the algebraic closure of $F$, is the analogue of the Milnor fiber.

\begin{rem} It is important to be aware that 
$Q^\hen_\fm \otimes_V \o{F}$ need not be a Noetherian ring.
\end{rem}

The singularity is the closed fiber of $X^\alg \to D^\alg$,
$$
X_0^\alg :=  \Spec(R^\hen_\fm) = \Spec(Q^\hen_\fm/f),
$$
where $f$ denotes the image in $Q$ of a specified uniformizing local parameter $t \in V$ and we set $R = Q/f$.
The algebraic analogue of the link is the punctured spectrum of the singularity:
$$
L^\alg := \Spec(R^\hen_\fm) \sm \fm.
$$

We summarize these constructions and introduce a few more analogies in the following table.

\begin{center}
{\small
\begin{tabular}{| p{2.5in} | p{2.5in}|}
\hline
Geometric Notion & Algebraic Analogue \\
\hline \hline 
$D$, a small open disc in complex plane & $D^\alg := \Spec(V)$, where $V$ is a Henselian 
dvr with uniformizing parameter $t$, algebraically closed residue
field $k$, and field of fractions $F$ \\
\hline
convergent power series $f \in \C\{x_0, \dots, x_n\}$ representing an isolated singularity &
a flat, finite type ring map $V \to Q$ sending $t \in V$ to $f \in Q$ that is smooth away from 
some point $\fm$ in the closed fiber \\
\hline
$X$, the intersection of a small closed ball in $\C^{n+1}$ with $f^{-1}(D)$ & $X^\alg = 
\Spec(Q^\hen_\fm$) \\
\hline
Milnor fibration $X^* \to D^*$ & the generic fiber 
of $X^\alg \to D^\alg$; namely, $\Spec(Q^\hen \otimes_V F) \to \Spec(F)$\\
\hline
the Milnor fiber $X_t := f^{-1}(t)$, $t \ne 0$ & 
the geometric generic fiber of $X^\alg \to D^\alg$; namely, 
$X_t^\alg := \Spec(Q^\hen_\fm \otimes_V \o{F})$ \\
\hline
the singularity $X_0 = f^{-1}(0)$ & 
$X_0^\alg := \Spec(R^\hen_\fm)$, where $R = Q/f$ (i.e., the 
closed fiber of $X^\alg \to D^\alg$) \\
\hline
the link $L$ & the punctured spectrum \phantom{XXXXXXXXX} \hfill \linebreak $L^\alg := \Spec(R^{\hen}_\fm) \setminus \fm$ \\
\hline
Milnor ball $\o{B}$ & $\o{B}^\alg = \Spec(Q^\hen_\fm)$ \\
\hline
Milnor sphere $S=\partial \o{B}$  & $S^\alg = \Spec(Q^\hen_\fm) \setminus \fm$ \\
\hline
$\rho_t^*: K^1_\top(X^*) \to K^1_\top(L)$ & the ``specialization map'' in $K$-theory with 
finite coefficients. \\
\hline
\end{tabular}
}
\end{center}

\begin{rem} Note that the algebraic analogue of the Milnor ball  and 
the algebraic analogue of $X$ coincide, in
  contrast with what occurs in the analytic setting. The analogy could be slightly improved if 
one does not assume from the start that $V$ is Henselian.
Then the algebraic analogue the Milnor ball remains $\Spec(Q^\hen_\fm)$ but the algebraic 
analogue of $X \to D$ would become 
$\Spec(Q^\hen_\fm \otimes_V V^\hen) \to \Spec(V^\hen)$, where $V^\hen$ denotes the 
Henselization of $V$ at
its unique maximal ideal. For this paper,  however, this more general construction has no 
advantage.
\end{rem}

With this notation fixed,  let us sketch our proof of the vanishing of the $\theta$ pairing for 
$R = Q/f$. 

Given finitely generated MCM $R$-modules $M$ and $N$,  we define classes 
$$
\alpha^{alg}(M) \in K_1(X^*_\alg) = K_1(Q\adi{f})
$$
and
$$
[N] \in K_0(L^\alg) = K_0(\Spec(R_\fm) \sm \fm),
$$
where $[N]$ is defined as the image of the class of $N$ in $G_0(R)$ under the restriction map 
$G_0(R) \to G_0(\Spec(R) \sm \fm) \cong
K_0(\Spec(R) \sm \fm)$ and
$\alpha^\alg(M) = [A] \in K_1(Q \adi{f})$ for any matrix factorization $(A,B)$ representation 
of $M$. 
Our first key result (see Corollary \ref{Cor3}) is the equation
\begin{equation} \label{E819}
\theta^R(M,N) = \chi\left(\partial\left(\alpha(M) \cup [f]\right) \cup [N]\right).
\end{equation}

Let us explain the components of this formula. Since $f$ is a unit of $Q\adi{f}$, it determines 
a class $[f] \in K_1(Q\adi{f})$. The symbol 
$\cup$ denotes the product operation in $K$-theory.  The map
$\partial: K_2(Q\adi{f}) \to K_1(\Spec(R) \sm \fm)$ is the boundary map in the long exact 
localization sequence associated to the closed
subscheme $L^\alg = \Spec(R) \sm \fm$ of $S^\alg = \Spec(Q) \sm \fm$.
(Since $Q\adi{f}$ and $\Spec(R)
\sm \fm$ are regular, we use, as we may,  $K$-theory in place of $G$-theory.) 
Finally,
$$
\chi: K_1(\Spec(R) \sm \fm) \to \Z.
$$
is the composition of the boundary map 
$$
K_1(\Spec(R) \sm \fm) \to
K_0(\Spec(R/\fm))
$$
with the canonical isomorphism $K_0(\Spec(R/\fm)) \cong \Z$, sending $[R/\fm]$ to $1$.

All of these facts remain true if we use $K$-theory with $\Z/\ell$ coefficients, where
$\ell$ is a prime distinct from the characteristic of $Q/\fm$ (and, for technical reasons, 
$\ell \geq 5$). Moreover, we may replace algebraic $K$-theory with (a version
of) \etale $K$-theory with $\Z/\ell$ coefficients, which we write as $\Ktop_*(-, \Z/\ell)$. 

A portion of the formula \eqref{E819}, using $\Ktop_*(-, \Z/\ell)$ instead of $K_*(-)$, is given by the 
mapping 
$$
\Ktop_1(X^*_\top, \Z/\ell) = \Ktop_1(Q\adi{f}, \Z/\ell) \to  \Ktop_1(\Spec(R) \sm \fm,  
\Z/\ell) = \Ktop_1(L^\alg, \Z/\ell)
$$
that sends $\a$ to $\partial(\a \cup [f])$. We interpret this map, under some additional 
hypotheses, as a ``specialization map'' in $K$-theory, and  it is the analogue of the
map $\rho_t^*$ occurring in the work of \BvSns.
The next key result is that this specialization map 
factors through 
$$
\Ktop_1(X_t^\alg, \Z/\ell) = \Ktop_1(Q^\hen_\fm \otimes_V \o{F}, \Z/\ell),
$$ 
This factorization is the analogue of the factorization of $\rho_t^*$ through the $K$-theory 
of the Milnor fiber used in the work of \BvSns.

Finally, we combine a Theorem of  Rosenschon-{\O}stv{\ae}r 
\cite[4.3]{OR} (which is a generalization of a celebrated theorem of Thomason \cite[4.1]
{Thomason}) and a Theorem of  
Illusie \cite[2.10]{Illusie}  to
prove 
$$
\Ktop_1(X_t^\alg, \Z/\ell) = 0
$$
if $n$ is even (and some additional mild hypotheses hold). Illusie's Theorem is in fact a good 
analogue of Milnor's Theorem, that the Milnor fiber of an isolated
singularity is homotopy equivalent to a  bouquet of
$n$-spheres. 

These results combine to prove the vanishing of $\theta$ for $n$ even. See Theorem \ref{MainThm} and its Corollaries for 
the precise statement of our vanishing result.

Finally,  when $n$ is odd, \BvS prove that the $\theta$ pairing is induced by the linking form on the homology of the link of the singularity. Our Corollary \ref{Cor3}
may be interpreted as analogue of this result in algebraic $K$-theory; see the discussion at the end of Section \ref{sec:re}.

\section{On Sherman's star pairing} \label{sec:Sherman}
Suppose $Q$ is a regular ring, $f \in Q$ is a non-zero-divisor, and $(A,B)$ is a {\em matrix factorization} of $f$. Recall that this means $A$ and $B$ are $m
\times m$ matrices with entries in $Q$ such that $AB = fI_m = BA$. Note that $A$ may be regarded as an element of $GL_m(Q\adi{f})$ and hence it determines a
class $[A] \in K_1(Q\adi{f})$. The main result of this section, Corollary \ref{Cor2}, gives an explicit description of the image of $[A]$ under the boundary map
$$
K_1(Q\adi{f}) \xra{\del} G_0(Q/f)
$$
in the long exact localization sequence in $G$-theory. (Since $Q\adi{f}$ is regular, $K_1(Q\adi{f}) \cong G_1(Q\adi{f})$.) 
Our description of this image, and its proof, builds on  work of C. Sherman, which we review here.

\subsection{Pairings in $K$-theory}
We will need some results about various pairings in algebraic
$K$-theory, including Sherman's {\em  star pairing}.

For an exact category $\cP$, an object $M$ of it, and a pair of
commuting automorphisms $\alpha,
\beta$ of $M$, Sherman  \cite{ShermanConnII}
defines an element
$$
\alpha \sstar \beta \in K_2(\cP).
$$
In this definition, the group
$K_2(\cP)$ is taken to be $\pi_2 |G(\cP)|$ where $G(\cP)$ is a simplicial set, the 
``$G$-construction'', defined by Gillet-Grayson \cite{GG}. The reader is referred to the paper by Gillet and Grayson for the full definition, but let us recall
the definition of zero, one and two simplices.
A zero simplex in $G(\cP)$ is an ordered pair $(X,Y)$ of objects of $\cP$. An edge (i.e., a  one simplex)  connecting $(X,Y)$ to $(X',Y')$ is given by a pair of short exact
sequences of the form
$$
0 \to X \xra{i} X' \xra{p} Z \to 0
$$
and 
$$
0 \to Y \xra{j} Y' \xra{q} Z \to 0.
$$
(Notice the right-most non-zero object is the same in both sequences.) 
Equivalently, an edge is a pair of monomorphisms, together with a compatible collection of isomorphisms on the various representations of their cokernels.
We typically denote an edge as $(X,Y) \xra{(i,j)} (X',Y')$, leaving $Z$, $p$, and $q$ implicit.

A two simplex is represented by a pair of commutative diagrams
$$
\xymatrix{
X_0 \ar@{>->}[r] & X_1 \ar@{->>}[d] \ar@{>->}[r] & X_2
\ar@{->>}[d]&&
X'_0 \ar@{>->}[r] & X'_1 \ar@{->>}[d] \ar@{>->}[r] & X'_2
\ar@{->>}[d]
 \\
& X_{1/0}  \ar@{>->}[r]^i & X_{2/0} \ar@{->>}[d]^p 
&& & X_{1/0} \ar@{>->}[r]^i & X_{2/0} \ar@{->>}[d]^p \\
&& X_{2/1}  && && X_{2/1} \\
}
$$
such that 
$0 \to X_{i} \to X_{j} \to X_{i/j} \to 0$
and $0 \to X'_{i} \to X'_{j} \to X_{i/j} \to 0$, for all $0 \leq i < j \leq 2$, and
$0 \to X_{1/0} \to X_{2/0} \to X_{2/1} \to 0$ are short exact sequences. 

The element $\alpha \sstar \beta$ of $K_2(\cP)$ is represented by
the following diagram of simplices in $G(\cP)$:
\begin{equation} \label{E116}
\xymatrix{
& (M,M) \ar[rr]^{(1,\beta)}  \ar[dd]_{(\alpha,\alpha)}
\ar[rrdd]^{(\alpha, \alpha\beta)} && (M,M) \ar[dd]^{(\alpha,\alpha)} & \\
(0,0) \ar[ur] \ar[dr]  & & & & (0,0) \ar[ul] \ar[dl]\\
& (M,M) \ar[rr]^{(1,\beta)} & & (M,M). & \\
}
$$
In this diagram, each triangle is a two simplex. 
The lower-middle triangle is the two simplex
$$
\xymatrix{
M \ar@{>->}[r]^{\alpha} & M \ar@{->>}[d] \ar@{>->}[r]^{=} & M
\ar@{->>}[d]&&
M \ar@{>->}[r]^{\alpha} & M \ar@{->>}[d] \ar@{>->}[r]^{\beta} & M
\ar@{->>}[d]
 \\
& 0 \ar@{>->}[r] & 0 \ar@{->>}[d] 
&& & 0 \ar@{>->}[r] & 0 \ar@{->>}[d] \\
&& 0  && && 0, \\
}
\end{equation}
and the others are defined similarly.
Since the top and bottom paths in \eqref{E116} represent the same loop in $|G(\cP)|$,
this diagram represents a map from the two-sphere  to $|G(\cP)|$ and
hence an element of $K_2(\cP)$.

Sherman's pairing is functorial in the
following sense: If $F: \cP \to \cP'$ is an exact functor between
exact categories, $M \in \ob \cP$ and $\alpha, \beta$ are commuting
automorphisms of $M$, then $F_*: K_2(\cP) \to K_2(\cP')$ sends
$\alpha \sstar \beta$ to $F(\alpha) \sstar F(\beta)$.

If $\cP = \cP(S)$, the category of projective right
$S$-modules for some (not necessarily commutative) ring $S$, and $A, B
\in GL_n(S)$ are matrices that commute, we define
$$
A \sstar B \in K_2(S)
$$
by viewing $A, B$ as commuting automorphism of the right $S$-module
$S^n$, with $S^n$ thought of as column vectors and the action of $A, B$ given by 
multiplication on the left
of $S^n$. If $g:S \to S'$ is a ring map and $A, B$ are commuting
elements of $GL_n(S)$, then $g_*(A \sstar B) = g(A) \sstar g(B)$ holds
in $K_2(S')$.

Grayson \cite{GraysonAuto} has also defined a ``star'' pairing, defined for
pairs of commuting $n \times n$ invertible matrices $(A,B)$ in a (not necessarily
commutative) ring $S$. Grayson's definition amounts to 
$$
A \gstar B := D(A) \mstar D'(B)
$$
where  $D(A)$ and $D'(B)$ are the $3n \times 3n$ elementary matrices
$$
D(A) = 
\begin{bmatrix}
A & 0 & 0 \\
0 & A^{-1} & 0 \\
0 & 0 & I_n
\end{bmatrix} \quad \text{ and } \quad
D'(B) =
\begin{bmatrix}
B & 0 & 0 \\
0 & I_n & 0 \\
0 & 0 & B^{-1}
\end{bmatrix},
$$
and $\mstar$ denotes {\em Milnor's star pairing}. The latter pairing
was defined by Milnor \cite{MilnorKtheory} for
pairs of commuting {\em elementary} matrices $E_1$ and $E_2$ (in $E(R)
\subset GL(R) :=
GL_\infty(R)$) and is given by 
$$
E_1 \mstar E_2 = [\tilde{E}_1, \tilde{E}_2]
$$
where $\tilde{E}_1, \tilde{E}_2$ are lifts of $E_1, E_2$ to elements of
the Steinberg group, $St(R)$, under the canonical surjection $St(R) \onto E(R)$, and $[-,-]$ 
denotes
the commutator of a pair of elements of $St(R)$.

We also have the multiplication rule for $K_*(S)$, when $S$ is
a commutative ring, which gives the
pairing we will write as cup product: 
$$
- \cup - : K_1(S) \otimes_\Z K_1(S) \to K_2(S).
$$
There are several equivalent methods of describing this multiplication
rule \cite{WeibelProducts}; we use Milnor's \cite{MilnorKtheory} original description of it:
For $A \in GL_m(S)$ and $B \in GL_n(S)$, we write $[A], [B] \in K_1(R)$
for the associated $K$-theory classes. One then defines 
$$
[A] \cup [B] = D(A \otimes I_n) \mstar D'(I_m \otimes B)
$$
where $A \otimes I_n$ and $I_m \otimes B$ are identified with elements
of $GL_{mn}(R)$ by viewing each as an automorphism of 
$S^m \otimes_S
S^n$ and
choosing, arbitrarily, a basis of $S^m \otimes_S
S^n$. 

It is important to realize that $A \gstar B$
and $[A] \cup [B]$ are 
not the same
  element of $K_2(S)$  in general.  This is clear even for diagonal matrices: Say
  $A$ and $B$ are $2 \times 2$ diagonal matrices with diagonal entries
  $a_1,a_2$ and $b_1, b_2$ in a commutative ring $S$. Then $A \gstar B = [a_1] \cup [b_1] +
  [a_2] \cup [b_2] \in K_2(R)$ but $[A] \cup [B] = [a_1] \cup [b_1] +
  [a_1] \cup [b_2] + [a_2] \cup [b_1] + [a_2] \cup [b_2]$. 
(We have written the group law for $K_2(S)$ additively here.) 

We do have, however, the identity
$$
A \gstar uI_n
= [A] \cup [u]
$$
for any commutative ring $S$, unit $u \in S$ and matrix $A \in
GL_n(S)$.  In particular,
\begin{equation} \label{E89}
u \gstar v = [u] \cup [v]
\end{equation}
for any pair of units $u,v$ in a commutative ring.

It is clear that Grayson's star pairing is also
functorial for ring maps: If $g: S \to S'$ is any ring homomorphism and
$A$ and $B$ are elements of $GL_n(S)$ that commute, 
then
$$
g_*(A \gstar^S B) = g(A) \gstar^{S'} g(B)
$$
where $g_*: K_2(S) \to K_2(S')$ is the induced map on $K_2$ and the
superscript indicates in which ring the star operation is being
performed.

Both Grayson's and Sherman's operations are also preserved by Morita equivalence, in the
following sense: If we identify an $n \times n$ invertible matrix with
entries in $\Mat_m(S)$ with an $nm \times nm$ invertible matrix with
entries in $S$ in the usual manner, then
$$
\psi(A \sstar^{\Mat_m(S)} B) = A \sstar^{S} B
$$
and
$$
\psi(A \gstar^{\Mat_m(S)} B) = A \gstar^{S} B
$$
were $\psi: K_2(\Mat_m(S)) \map{\cong} K_2(S)$ is the canonical
isomorphism induced from the isomorphisms $\psi: St(\Mat_m(S))
\map{\cong} St(S)$ and $\psi: GL(\Mat_m(S)) \map{\cong} GL(S)$
(see \cite[4.5]{GraysonAuto}).

\subsection{Sherman's Theorem on the Connecting Homomorphism}
Sherman's Theorem \cite[3.6]{ShermanConnII} describes the
image of certain elements of the form $\alpha \sstar \beta \in K_1(\cP)$  under the boundary map in a 
long exact localization sequence.
To state it precisely, and for use again in Section \ref{sec:re}, we review some technical details of Sherman's work.

In an exact category $\cA$, a {\em mirror image sequence} is a pair $(E,F)$ of short exact sequences on the same three objects, but in the opposite order:
\begin{equation} \label{E115b}
E = (0 \to X \xra{i} Y \xra{p} Z \to 0) \and F = (0 \to Z \xra{j} Y \xra{q} X \to 0)
\end{equation}
Sherman \cite[p.18]{ShermanK1Exact} associates to a mirror image sequence $(E,F)$ a loop in $|G(\cA)|$ given by the diagram of
one simplices in $G(\cA)$
$$
(0,0) \to (X,X) \xra{\left(\iota_1,i\right)}  (X \oplus Z, Y) \xla{\left(\iota_2, j\right)}
(Z,Z) \leftarrow (0,0),
$$
where $\iota_1$ and  $\iota_2$ denote inclusions into the first and second summands,
and he writes $G(E,F)$ for the associated element of $K_1(\cA) = \pi_1 |G(\cA)|$.

Mainly for use in Section \ref{sec:re}, we recall here an alternative description of the class $G(E,F)$. A {\em double short exact sequence} in an exact category
$\cA$, defined originally by
Nenashev \cite{NenashevDSES},  is a pair of short exact sequences involving the same three objects, in the same order:
$$
l = (0 \to A \xra{i} B \xra{p} C \to 0,
0 \to A \xra{j} B \xra{q} C \to 0).
$$
Nenashev associates to a double short exact sequence $l$ the class $m(l) \in K_1(\cA)$ represented by the loop
$$
(0,0) \to (A,A) \xra{(i,j)} (B,B) \leftarrow (0,0).
$$
(Moreover, Nenashev \cite{NenashevDSES, NenashevK1GenRel} proves that $K_1(\cA)$ is generated by these loops and gives explicit generators for all the relations.)

Associated to a mirror image sequence \eqref{E115b}, we have the double short exact sequence 
$l_{(E,F)}$ given by  
\begin{equation} \label{E115c}
\begin{aligned}
&0 \to 
X \oplus Z \xra{\begin{bmatrix} 0 & j \\ 0 & 0 \\ 1 & 0 \end{bmatrix}}
Y \oplus Z \oplus X \xra{\begin{bmatrix} 0 & 1 & 0  \\ q & 0 & 0 \end{bmatrix}}
Z \oplus X \to 0 \\
&0 \to 
X \oplus Z \xra{\begin{bmatrix} i & 0 \\ 0 & 1 \\ 0 & 0 \end{bmatrix}}
Y \oplus Z \oplus X \xra{\begin{bmatrix} p & 0 & 0  \\ 0 & 0 & 1 \end{bmatrix}}
Z \oplus X \to 0.
\end{aligned}
\end{equation}

The following result of Sherman connects explicitly the two types of elements of $K_1(\cA)$.

\begin{prop}[Sherman] \cite[p. 164]{ShermanMirror} For any mirror image sequence \eqref{E115b} in an exact category $\cA$, we have
$$
G(E,F) = m(l_{E,F}) - G(Y, -1) \in K_1(\cA),
$$
where $(Y,-1)$ denotes the mirror image sequence
$$
(0 \to Y \xra{-1} Y \to 0 \to 0, 0 \to 0 \to Y \xra{1} Y \to 0).
$$
\end{prop}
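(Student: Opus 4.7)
The plan is to exhibit an explicit homotopy in the Gillet--Grayson simplicial set $G(\cA)$ between the loop representing $G(E,F) + G(Y,-1)$ and the loop representing $m(l_{E,F})$. Both sides live in $\pi_1|G(\cA)|$, so the equality will be witnessed by filling in a suitable 2-dimensional disc built from 2-simplices of $G(\cA)$.

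First I would spell out the three loops in question. The loop for $m(l_{E,F})$ is $(0,0) \to (X \oplus Z, X \oplus Z) \to (Y \oplus Z \oplus X, Y \oplus Z \oplus X) \leftarrow (0,0)$, with the pair of monomorphisms given by the two columns of matrices in \eqref{E115c}. The loop for $G(E,F)$ passes through the zero-simplices $(X,X)$, $(X \oplus Z, Y)$, $(Z,Z)$. The loop for $G(Y,-1)$ passes through $(Y,0)$, $(Y,Y)$ with edges encoding the pair $(\id_Y, -\id_Y)$ against the trivial exact sequence. The strategy is to produce 2-simplices that decompose the big loop $m(l_{E,F})$ into the concatenation of the $G(E,F)$ loop with a correction loop which can then be identified with $G(Y,-1)$.

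Second, I would construct the decomposition using the $3 \times 3$-lemma. On each of the two exact sequences in $l_{E,F}$, the middle object $Y \oplus Z \oplus X$ admits a natural filtration whose associated graded pieces relate $(X \oplus Z)$-summands to $Y$: specifically, one arranges a pair of compatible $3 \times 3$ diagrams of short exact sequences in which the rows are the given sequences of $l_{E,F}$, one column is the split sequence $0 \to Z \oplus X \to Y \oplus Z \oplus X \to Y \to 0$, and the other columns extract the embeddings $(\iota_1, i)$ and $(\iota_2, j)$ appearing in $G(E,F)$. These diagrams are exactly the data of two 2-simplices in $G(\cA)$ whose boundaries exhibit $m(l_{E,F})$ as $G(E,F)$ plus a residual loop supported on the object $Y$.

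Third, I would identify the residual loop with $G(Y,-1)$. The identification is forced by the sign that arises when comparing the two block-matrix presentations in \eqref{E115c}: the top sequence uses the permutation of summands that sends $(x,z) \mapsto (j(z), 0, x)$ in the middle and $(y,z,x) \mapsto (z,x)$ on the right, while the bottom uses $(x,z) \mapsto (i(x), z, 0)$ and $(y,z,x) \mapsto (p(y), x)$. Passing from one to the other via the $3 \times 3$ diagrams above requires conjugating by the matrix that interchanges the two summands in $X \oplus Z$ or $Z \oplus X$, which contributes the automorphism $-\id_Y$ on the residual $Y$-factor rather than $\id_Y$.

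The main obstacle will be the combinatorial bookkeeping: one must exhibit the filtrations on $Y \oplus Z \oplus X$ as honest 2-simplices of $G(\cA)$, meaning compatible pairs of commutative diagrams of short exact sequences with matching quotients, and then verify that the residual loop obtained after subtracting off the simplices capturing $G(E,F)$ is literally the one Sherman defines for $(Y,-1)$. The sign-tracking at the moment of the summand interchange is the delicate point; everything else is a direct application of the additivity and the $3 \times 3$-lemma in the G-construction.
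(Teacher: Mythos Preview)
The paper does not contain a proof of this proposition. It is stated with attribution to Sherman and the citation \cite[p.~164]{ShermanMirror}, and is used later (in the proof of Theorem~\ref{Thm1}) as a black box to convert between the $G(E,F)$ and $m(l_{E,F})$ descriptions of the same $K_1$-class. So there is no ``paper's own proof'' against which to compare your attempt.

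Your outline is a reasonable sketch of the kind of argument one expects: build explicit $2$-simplices in $G(\cA)$ from $3\times 3$ diagrams that filter $Y\oplus Z\oplus X$, thereby decomposing the Nenashev loop $m(l_{E,F})$ into the Sherman loop $G(E,F)$ plus a residual loop on $Y$, and then track the sign to see that the residual is $G(Y,-1)$. If you want to actually carry this out, the honest work is in Sherman's paper \cite{ShermanMirror}; your step~3 in particular (the sign identification) is where the details matter and where a vague ``conjugating by the swap contributes $-\id_Y$'' would need to be made precise at the level of the specific $2$-simplices you write down.
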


Suppose now that $\cA$ is an abelian category, $\cS \subset \cA$ is a Serre
subcategory, and $\cP  = \cA/\cS$, the associated quotient abelian
category.  Recall that Quillen's Localization Sequence is a long
exact sequence of the form
$$
\cdots \to K_i(\cS) \to K_i(\cA) \to K_i(\cP) \xra{\partial} K_{i-1}(\cS) \to
\cdots.
$$
Suppose also that  $\alpha, \beta$ are commuting, injective
endomorphisms of an object $M$ of $\cA$ whose cokernels lie in $\cS$.
These form a commutative 
diagram 
$$
\xymatrix{
M \ar@{>->}[rr]^{\alpha} \ar@{>->}[dd]^{\beta} \ar@{>->}[ddrr]^{\beta\alpha}_{\alpha\beta}
&& M \ar@{>->}[dd]^{\beta} \\
\\
M \ar@{>->}[rr]^{\alpha} && M,  \\
}
$$
and by the ``snake chasing its tail Lemma'', this diagram determines the mirror image sequence $(E,F)$:
\begin{equation} \label{E116b}
\begin{aligned}
E & := (0 \to \coker(\alpha) \map{\beta} \coker(\beta\alpha) \to
\coker(\beta) \to 0) \\
F & := (0 \to \coker(\beta) \map{\alpha} \coker(\alpha\beta) \to
\coker(\alpha) \to 0).
\end{aligned}
\end{equation}
Since we are assuming that $\coker(\alpha)$ and $\coker(\beta)$ belong to $\cS$, 
the mirror image sequence $(E,F)$ determines the class $G(E,F) \in K_1(\cS)$. 
Also, 
the morphisms in $\cP = \cA/\cS$ induced by $\alpha$ and $\beta$, which we will write as
$\overline{\alpha}$ and  $\overline{\beta}$, are commuting automorphisms of $M \in \cP$, so that
we have the element  $\overline{\alpha} \sstar \overline{\beta} \in K_2(\cP)$.

\begin{thm}[Sherman]  \cite[3.6]{ShermanConnII}  \label{ShermansThm}
Under these hypotheses, 
\begin{equation} \label{ShermanE}
\partial(\overline{\alpha} \sstar \overline{\beta}) = 
\pm G(E,F) \in K_1(\cS)
\end{equation}
where $(E,F)$ is the mirror image sequence \eqref{E116b}.
\end{thm}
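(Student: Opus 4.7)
The plan is to compute $\partial(\overline{\alpha} \sstar \overline{\beta})$ by lifting the simplicial 2-sphere in $|G(\cP)|$ that represents this class to a 2-chain in $|G(\cA)|$ whose boundary is a 1-loop in $|G(\cS)|$, and then identifying that boundary loop with $G(E,F)$ up to sign. This is the standard strategy for unwinding $\partial\colon K_2(\cP) \to K_1(\cS)$ from the homotopy fiber sequence $|G(\cS)| \to |G(\cA)| \to |G(\cP)|$ associated to the Serre quotient $\cA/\cS = \cP$.

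First I would write down the explicit 2-sphere of \eqref{E116} defining $\overline{\alpha} \sstar \overline{\beta}$, but now with the automorphisms of $M \in \cP$ regarded as the injective endomorphisms $\alpha, \beta$ of $M \in \cA$. Each of the six 2-simplices of \eqref{E116} has a canonical lift to a 2-simplex of $G(\cA)$: the lower-middle simplex, for instance, lifts to one whose three cokernels become $\coker(\alpha)$, $\coker(\beta\alpha)$, and $\coker(\beta)$ instead of the three zeros, with the analogous lift on the right-hand diagram using $\coker(\alpha\beta) = \coker(\beta\alpha)$ (they agree as subquotients of $M$). The other four 2-simplices lift similarly. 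These lifts agree with the $\cP$-picture on every edge internal to the 2-sphere, but they fail to land at $(0,0)$ along the edges incident to the two "polar" $(0,0)$-vertices.

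Next I would identify the resulting outer boundary as a 1-loop in $|G(\cS)|$. Its vertices are $(0,0)$, $(\coker(\alpha), \coker(\alpha))$, $(\coker(\beta\alpha), \coker(\alpha\beta))$, and $(\coker(\beta), \coker(\beta))$, and its edges are governed by precisely the two short exact sequences produced by snake-chasing the $2 \times 2$ commutative square with horizontals $\alpha$ and verticals $\beta$; these are exactly $E$ and $F$ in \eqref{E116b}. Comparing with Sherman's loop
$$
(0,0) \to (\coker(\alpha), \coker(\alpha)) \to (\coker(\alpha) \oplus \coker(\beta), \coker(\beta\alpha)) \leftarrow (\coker(\beta), \coker(\beta)) \leftarrow (0,0)
$$
whose two interior 1-simplices are built from $E$ and $F$, one identifies the outer boundary with $\pm G(E,F)$ in $\pi_1 |G(\cS)| = K_1(\cS)$. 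The alternative formula $G(E,F) = m(l_{E,F}) - G(Y,-1)$ of Sherman, via Nenashev's double short exact sequences \eqref{E115c}, would serve as a useful cross-check: the correction term $G(Y,-1)$ with $Y = \coker(\beta\alpha)$ accounts for the discrepancy between summing pairs of monomorphisms directly and summing them through a direct sum.

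The main obstacle is the simplicial bookkeeping: one must match the six lifted 2-simplices edge-by-edge, verify that their outer edges really close up to a single loop in $|G(\cS)|$, and check that this loop is homotopic to the standard model for $G(E,F)$. The sign ambiguity $\pm$ reflects an orientation choice when identifying the 2-sphere defining the $\sstar$-pairing with the suspension of its equatorial 1-loop and cannot be eliminated without fixing conventions; different conventions for the $G$-construction and for Sherman's $\sstar$ used in the literature can flip the sign.
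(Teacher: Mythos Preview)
The paper does not prove this theorem; it is quoted from Sherman's paper \cite[3.6]{ShermanConnII} and used as a black box. So there is no ``paper's own proof'' to compare your proposal against.

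That said, your sketch is a reasonable outline of how such a result is established: one lifts the explicit $2$-sphere \eqref{E116} from $G(\cP)$ to a $2$-chain in $G(\cA)$ by replacing the quotient objects $0$ with the actual cokernels $\coker(\alpha)$, $\coker(\beta)$, $\coker(\alpha\beta)$, and then reads off the boundary loop in $G(\cS)$. The identification of that boundary with Sherman's $G(E,F)$ is indeed the heart of the matter, and you are right that the bookkeeping is delicate. One point to be careful about: the ``polar'' vertices $(0,0)$ in \eqref{E116} lift to $(0,0)$ in $G(\cA)$ without modification, so the failure to close up is not at those vertices but along the edges where the cokernel data in $\cA$ differs from that in $\cP$. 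The boundary you extract must be a genuine loop in $|G(\cS)|$, and matching it to the specific four-edge loop defining $G(E,F)$ (rather than, say, a Nenashev double exact sequence representative) requires an additional homotopy in $|G(\cS)|$. Your proposal acknowledges this but does not supply it; if you want a self-contained argument you would need to fill that in, or simply cite Sherman as the present paper does.
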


\subsection{A proof of (a weakened form of) a conjecture of Sherman}
Sherman has conjectured that his pairing $\sstar$ and Grayson's
pairing $\gstar$ coincide up to a sign. We prove here a slightly
weaker form of this conjecture:

\begin{thm} \label{SThm1}
If $S$ is a (not necessarily  commutative) ring and $A$ and $B$ are $n \times
  n$ invertible matrices with entries in $S$ that commute, then
$A \sstar B$ and $A \gstar B$ agree up to a sign and two-torsion --- i.e.,
$$
2 A \sstar B = \pm 2 A \gstar B \, \in K_2(S).
$$
\end{thm}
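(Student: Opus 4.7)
My plan is to compare the two pairings via a reduction to a universal commuting pair and then compute both sides through Sherman's boundary formula.

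First, I would establish that both $\sstar$ and $\gstar$ are compatible with direct sums of commuting pairs: if $A = A_1 \oplus A_2$ and $B = B_1 \oplus B_2$ with each $A_i$ commuting with $B_i$, then $A \star B = A_1 \star B_1 + A_2 \star B_2$ in $K_2(S)$. For Sherman's pairing this follows by inspecting the 2-sphere \eqref{E116}: with $M = M_1 \oplus M_2$, the displayed diagram splits in $|G(\cP)|$ into a wedge of two analogous 2-spheres, one for each summand. For Grayson's pairing the analogous additivity is a direct consequence of its definition $A \gstar B = D(A) \mstar D'(B)$ combined with the corresponding property of Milnor's $\mstar$ on commuting elementary matrices. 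Combined with the Morita invariance recalled in the excerpt, this lets us replace any particular $(A,B)$ by a stabilization that embeds into a more flexible model.

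Second, by functoriality of both pairings along ring maps, it suffices to verify the identity in a single universal example: let $R_n$ be the noncommutative ring freely generated by the entries of formal invertible $n \times n$ matrices $A, A^{-1}, B, B^{-1}$ modulo the evident inverse relations together with $AB = BA$. Any commuting pair of invertible matrices is the image of the universal pair under a ring homomorphism $R_n \to S$, so it is enough to prove $2(A \sstar B) = \pm 2(A \gstar B)$ in $K_2(R_n)$. In fact, after stabilizing one can often reduce further to pairs of commuting units, where equation \eqref{E89} already gives $u \gstar v = [u] \cup [v]$ and one is reduced to identifying $2(u \sstar v)$ with $\pm 2 \, [u] \cup [v]$.

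Third, I would realize both pairings through a common localization setup. Embed $A$ and $B$ as commuting injective endomorphisms of a free module over a suitable abelian category $\cA$ with Serre subcategory $\cS$ consisting of the relevant cokernels, so that $\bar A, \bar B$ are commuting automorphisms in $\cP = \cA/\cS$. Sherman's Theorem \ref{ShermansThm} computes $\partial(\bar A \sstar \bar B) = \pm G(E,F)$ in $K_1(\cS)$ for the mirror image sequence \eqref{E116b}. Running the parallel computation for $\gstar$, using $D(A) \mstar D'(B)$ and the standard description of the image in $K_1$ of a Milnor symbol built from commutators of Steinberg lifts, should produce a class that agrees with $G(E,F)$ up to sign modulo 2-torsion; the factor of $2$ appears precisely because lifts of $D(A)$ and $D'(B)$ to the Steinberg group are only determined up to central elements of order dividing $2$, and this ambiguity is annihilated upon doubling.

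The main obstacle will be the explicit matching in the third step: one must compare the Nenashev-style class $m(l_{E,F})$ underlying Sherman's formula with the commutator $[\widetilde{D(A)}, \widetilde{D'(B)}]$ that defines $\partial(A \gstar B)$ via Milnor's pairing. Extracting a common Steinberg-cocycle representative from the mirror image sequence $(E,F)$ is the delicate point, since the $G$-construction and the Steinberg group describe $K_2$ by genuinely different presentations. Once this is done, transferring the $K_1$-level identity back to $K_2(R_n)$ requires showing that the relevant boundary map detects the difference $2(A \sstar B) \mp 2(A \gstar B)$ in the universal example, which can be arranged by choosing the ambient category $\cA$ so that the localization sequence is suitably split for the universal pair.
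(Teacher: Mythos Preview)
Your proposal has a genuine gap: the ``universal'' ring $R_n$ you introduce is far too complicated to compute in, and your third step never actually carries out the promised comparison. You yourself flag the ``explicit matching'' as the main obstacle and leave it undone; the hand-wavy remark that the factor of $2$ comes from ambiguity in Steinberg lifts is not a proof, and the closing idea of arranging a split localization sequence for $R_n$ is not justified. Without a concrete computation somewhere, nothing forces $2(A \sstar B)$ and $\pm 2(A \gstar B)$ to agree.

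The paper's argument avoids all of this by choosing a much smaller universal example: the \emph{commutative} Laurent polynomial ring $S_0 = \Z[x^{\pm 1}, y^{\pm 1}]$ with $A = x$ and $B = y$ viewed as $1 \times 1$ matrices. The point is that $K_2(S_0)$ is completely known, namely $\Z/2 \oplus (\Z/2)^2 \oplus \Z$, so one only has to check that $x \sstar y$ and $x \gstar y$ have the same image in the free summand $\Z$. This is done by pushing both through the boundary map $\partial: K_2(S_0) \to G_1(\Z[x^{\pm 1}])$ of the localization at $y$: Sherman's Theorem~\ref{ShermansThm} computes $\partial(x \sstar y) = \pm[x]$, while \eqref{E89} plus Suslin's product formula for $\partial$ gives $\partial(x \gstar y) = \partial([x] \cup [y]) = [x]$. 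Both hit a generator of $\Z$, so $2(x \sstar y) = \pm 2(x \gstar y)$ in $K_2(S_0)$. The general case then follows exactly as you anticipated, by the ring map $\Z[x^{\pm 1}, y^{\pm 1}] \to \Mat_n(S)$ sending $x \mapsto A$, $y \mapsto B$ together with Morita invariance. Your additivity step for direct sums is not needed.
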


\begin{proof}
We first prove this when $S = \Z[x^{\pm 1},y^{\pm 1}]$ and $A$, $B$ are the $1 \times
1$ matrices $x$, $y$.
Since
$$
K_2(S) \cong K_2(\Z) \oplus K_1(\Z)^{\oplus 2}
\oplus K_0(\Z)
\cong \Z/2 \oplus (\Z/2)^{\oplus 2} \oplus \Z,
$$
to prove the theorem in this case, it suffices to show the images of 
$x \sstar y$ and $x \gstar y$ 
under the canonical map $K_2(S) \onto
\Z$, given by modding out torsion, are both generators of $\Z$. 

We have the localization long exact sequence
$$
\cdots \to K_2(\Z[x^{\pm 1}, y])\to K_2(S) \map{\partial} G_1(\Z[x^{\pm 1},y]/y)\map{0} 
K_1(\Z[x^{\pm 1}, y])\into K_1(S) 
\to
 \cdots.
$$
The map labelled as vanishing does so because the next map is
injective,  and this injectivity holds since 
$$
K_1(\Z[x^{\pm 1}]) \map{\cong}
K_1(\Z[x^{\pm 1}, y])
$$
is an isomorphism and the map
$\Z[x^{\pm 1}] \to S$ splits.

Observe $\Z[x^{\pm 1},y]/y \cong \Z[x^{\pm 1}]$ and hence 
$$
G_1(\Z[x^{\pm 1},y]/y) \cong
K_1(\Z[x^{\pm 1}]) \cong K_1(\Z) \oplus K_0(\Z) 
\cong \Z/2 \oplus \Z.
$$
Also, 
$$
K_2(\Z[x^{\pm 1}, y]) \cong K_2(\Z[x^{\pm 1}]) 
\cong K_2(\Z) \oplus K_1(\Z)
\cong \Z/2 \oplus \Z/2.
$$
From these calculations it follows that in order to prove the theorem
in this special case, it suffices to prove $\partial(x \sstar y)$ and
$\partial(x \gstar y)$
each map
to a generator of $\Z$ under the canonical surjection $G_1(\Z[x^{\pm
  1},y]/y) \onto \Z$ given by modding out torsion. 

To prove $\partial(x \sstar y)$ maps to a generator, 
we apply Sherman's equation \eqref{ShermanE}  with $\cA = \cM(\Z[x^{\pm1}, y])$ and $\cS$
the Serre subcategory of $y$-torsion modules, so that $\cA/\cS =
\cM(\Z[x^{\pm1}, y^{\pm1}])$. (In general, $\cM(A)$ denotes the abelian category of finitely generated $A$-modules.) 
This gives 
$\partial(x \sstar y) = \pm G(E,F)$ 
where $E, F$ are the short exact sequences in $\cS$
$$
E = \left(0 \to 0 \to \Z[x^{\pm 1}] \map{=} \Z[x^{\pm 1}] \to 0\right)
$$
and
$$
F = \left(0 \to \Z[x^{\pm1}] \map{x} \Z[x^{\pm1}] \to 0 \to 0\right).
$$
Under the isomorphism $K_1(\Z[x^{\pm 1}]) \cong
K_1(\cS)$, induced by the inclusion of $\cP(\Z[x^{\pm1},y]/y)$ into
$\cS$, the element $G(E,F)$ corresponds to $[x]
\in K_1(\Z[x^{\pm 1}])$, which maps to a generator under
$K_1(\Z[x^{\pm1}]) \onto \Z$.

To prove $\partial(x \gstar y)$ also maps to a generator of $\Z$, we first observe that $x
\gstar y = [x] \cup [y] \in K_2(S)$
by
\eqref{E89}. 
Note that $[x]$ lifts to an element of
$K_1(\Z[x^{\pm1}, y])$ and hence by \cite[1.1]{Suslin} we have
$$
\del([x] \cup [y]) = \overline{[x]} \cup \del'([y])
$$
where $\overline{[x]}$ is the image of $[x]$ under the map
$$
K_1(\Z[x^{\pm1}, y]) \to K_1(\Z[x^{\pm1}, y]/y) \cong K_1(\Z[x^{\pm 1}])
$$ 
and 
$$
\partial': K_1(S) \to G_0(\Z[x^{\pm 1},y]/y) \cong G_0(\Z[x^{\pm 1}]) \cong
K_0(\Z[x^{\pm 1}]) \cong \Z.
$$
is the boundary map in the localization long exact sequence. 
Now $\partial'([y]) = \coker(\Z[x^{\pm 1},y] \map{y}\Z[x^{\pm 1},y]) = 1$ in
$K_0(\Z[x^{\pm1}])$ and thus 
$$
\partial([x] \cup [y]) = \overline{[x]} \cup \partial'([y]) = 
\overline{[x]},
$$
whose image under 
$$
K_1(\Z[x^{\pm 1}]) \onto \Z
 $$
is a generator.
This completes the proof of the theorem in this special case.

The general form of the theorem now follows readily by naturality. Let $\Mat_n(S)$ denote
the ring of $n \times n$ matrices with entries in $S$ and define
$g: \Z[x^{\pm1},y^{\pm1}] \to \Mat_n(S)$ by $g(x) = A$ and $g(y) =
B$. This is well-defined since $A,B$ are assumed to commute.
We have proven $2 x \sstar y = \pm 2x \gstar y$. Using the naturality of
$\sstar$ and $\gstar$ for ring maps and Morita equivalence, it follows that
$$
\begin{aligned}
2 A \sstar B & = 2 \psi(A \sstar^{\Mat_n(S)} B) =
2 \psi g_*(x \sstar y)
=
\pm 2 \psi g_*(x \gstar y) \\
& =  
\pm 2 \psi(A \gstar^{\Mat_n(S)} B) =
\pm 2 A \gstar B. \qedhere 
\end{aligned}
$$
\end{proof}

\begin{cor} \label{Cor1}
Suppose $S$ is a commutative ring, $A,B$ are 
$n \times n$ invertible matrices with entries in $S$ and
  $AB = BA = fI_n$ for some (unit) $f \in S$. 
Then $2A \gstar B = 2 [A] \cup [f]$ and  
$2 A \sstar B = \pm 2 [A] \cup [f]$. 
\end{cor}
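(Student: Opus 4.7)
The plan is to reduce the first identity, $2 A \gstar B = 2[A] \cup [f]$, to two ingredients already at hand: the equality $A \gstar u I_n = [A] \cup [u]$ (stated just before \eqref{E89}) and the bimultiplicativity plus graded-commutativity of $\gstar$ on commuting pairs. The second identity then follows from the first by Theorem \ref{SThm1}.

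First I would write $B = f A^{-1}$, using $AB = fI_n$ and the invertibility of $f$. The three matrices $A$, $fI_n$, $A^{-1}$ commute pairwise, so by bimultiplicativity of $\gstar$ in the second argument --- which holds because $D'(B_1 B_2) = D'(B_1) D'(B_2)$ whenever $B_1$ and $B_2$ commute, and because Milnor's $\mstar$ is bimultiplicative on commuting pairs in the Steinberg group --- one has
\[
A \gstar B \;=\; A \gstar (fI_n) \;+\; A \gstar A^{-1} \;=\; [A] \cup [f] \;+\; A \gstar A^{-1}.
\]
It then remains to show $2(A \gstar A^{-1}) = 0$. Applying bimultiplicativity to $A \cdot A^{-1} = I_n$ and using $A \gstar I_n = 0$ gives $A \gstar A^{-1} = -(A \gstar A)$, so the claim reduces to $2(A \gstar A) = 0$. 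This in turn is the graded-commutativity $A \gstar B + B \gstar A = 0$ on commuting pairs, applied with $B = A$; the graded-commutativity is inherited from the skew-symmetry of Milnor's $\mstar$ on commuting elements of the Steinberg group, via the definition $A \gstar B = D(A) \mstar D'(B)$. Doubling the displayed equation yields $2 A \gstar B = 2[A] \cup [f]$, and invoking Theorem \ref{SThm1} gives $2 A \sstar B = \pm 2 A \gstar B = \pm 2[A] \cup [f]$.

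The only slightly delicate point is the graded-commutativity of $\gstar$ on commuting pairs, which should be recorded in \cite{GraysonAuto}; if it is not available there in directly citable form, it can be checked by hand by computing the commutator of appropriate lifts of $D(A)$ and $D'(A)$ in the Steinberg group and using the standard identities for $\mstar$. Apart from that single input, the argument is purely formal manipulation of the multiplicative properties of $\gstar$, $\mstar$, and $\cup$.
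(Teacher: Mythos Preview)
Your approach is essentially the same as the paper's: write $B = fA^{-1}$, use bimultiplicativity to split off $[A] \cup [f]$, and then show that the remaining term $A \gstar A^{-1} = -D(A) \mstar D'(A)$ is $2$-torsion. The reduction to Theorem~\ref{SThm1} for the second identity is also identical.

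There is, however, a small gap in your justification of the key step $2(A \gstar A) = 0$. You write that the graded-commutativity $A \gstar B + B \gstar A = 0$ ``is inherited from the skew-symmetry of Milnor's $\mstar$,'' but skew-symmetry of $\mstar$ alone gives only $D(A)\mstar D'(B) = -D'(B) \mstar D(A)$, and $D'(B)\mstar D(A)$ is not $B \gstar A = D(B) \mstar D'(A)$. One needs the additional fact that $D(-)$ and $D'(-)$ are conjugate by an elementary matrix: the paper (following Milnor \cite[8.2]{MilnorKtheory}) takes
\[
P = \begin{bmatrix} -I_n & 0 & 0 \\ 0 & 0 & I_n \\ 0 & I_n & 0 \end{bmatrix},
\]
observes $PD(A)P^{-1} = D'(A)$ and $PD'(A)P^{-1} = D(A)$, and then uses invariance of $\mstar$ under inner automorphisms together with skew-symmetry to get $D(A)\mstar D'(A) = D'(A)\mstar D(A) = -D(A)\mstar D'(A)$. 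This is precisely the ``check by hand'' you allude to in your final paragraph, and once made explicit your argument is complete and coincides with the paper's.
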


\begin{proof} By Theorem \ref{SThm1}, it suffices to prove the first equation.

Since $B = fA^{-1}$, we have $D'(B) =
  D'(fI_n)D'(A^{-1}) = D'(fI_n) D'(A)^{-1}$
and hence the bimultiplicativity of
  $\mstar$ (see \cite[8.1]{MilnorKtheory}) gives
$$
A \gstar B = D(A) \mstar D'(B) = D(A) \mstar D'(fI_n) + 
D(A) \mstar
D'(A^{-1})
=
[A] \cup [f] -
D(A) \mstar
D'(A)
$$
(where, as before, the group law for $K_2$ is written additively).

We now extend slightly an argument of Milnor \cite[8.2]{MilnorKtheory}. Let 
$$
P = 
\begin{bmatrix}
-I_n & 0 & 0 \\
0 & 0 & I_n \\
0 & I_n & 0 \\
\end{bmatrix}.
$$
Then $PD(A)P^{-1} = D'(A)$ and $PD(B)P^{-1} = D'(A)$. So, using the invariance of $\mstar$ 
under inner automorphisms
\cite[8.1]{MilnorKtheory}, we have
$$
D(A) \mstar D'(A)
= D'(A) \mstar D(A)
= - D(A) \mstar D'(A).
$$
It follows that $2D(A) \mstar D'(A) = 0$ and hence
$$
2 A \gstar B = 2 [A] \cup [f].
$$
\end{proof}

The following result will be used to reinterpret Hochster's $\theta$
invariant in the next section. We will apply it in the case when $Q$ is assumed to be regular, but state it here in its natural level of generality.

\begin{cor} \label{Cor2}
Let $Q$ be a commutative ring and  $f \in Q$ a non-zero-divisor, and set $R = Q/f$. 
Assume $(A,
B)$ is a matrix factorization of $f$ in $Q$; i.e., 
$A$ and $B$ are $m \times m$ matrices with entries in $Q$ such that $AB
= BA = fI_m$.

Then under the composition of 
$$
K_2(Q\ai{f}) \xra{\can}
G_2(Q\ai{f}) 
\map{\partial} G_1(R),
$$
where $\partial$ is the boundary map in the localization long
exact sequence, the image of the element $[A] \cup [f] \in K_2(Q\ai{f})$ is, up to a sign
and two torsion, equal to  $G(E,F) \in G_1(R)$, 
where $(E,F)$ is the mirror image sequence 
$$
\begin{aligned}
E & := (0 \to \coker(A) \map{\beta} R^m \to
\coker(B) \to 0) \\
F & := (0 \to \coker(B) \map{\alpha} R^m  \to
\coker(A) \to 0).
\end{aligned}
$$
\end{cor}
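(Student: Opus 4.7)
The plan is to combine Corollary \ref{Cor1} and Theorem \ref{SThm1} of this section with Sherman's Theorem \ref{ShermansThm} on the connecting homomorphism. Since $AB = fI_m$, the matrices $A$ and $B$ are invertible in $Q\adi{f}$, and they commute because $AB = BA$. Corollary \ref{Cor1} gives $2\bigl([A] \cup [f]\bigr) = 2\,(A \gstar B)$ in $K_2(Q\adi{f})$, while Theorem \ref{SThm1} gives $2\,(A \gstar B) = \pm 2\,(A \sstar B)$. Pushing these equalities along the canonical map $\can: K_2(Q\adi{f}) \to G_2(Q\adi{f})$ and using the functoriality of $\sstar$ with respect to the exact inclusion $\cP(Q\adi{f}) \hookrightarrow \cM(Q\adi{f})$, I obtain $2\,\can([A] \cup [f]) = \pm 2\,(\bar A \sstar \bar B)$ in $G_2(Q\adi{f}) = K_2(\cM(Q\adi{f}))$, where $\bar A, \bar B$ are the commuting automorphisms of $Q\adi{f}^m$ induced by $A$ and $B$.

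The next step is to apply Sherman's Theorem \ref{ShermansThm} to the abelian category $\cA = \cM(Q)$ and the Serre subcategory $\cS$ of finitely generated $Q$-modules with support in $V(f)$ (so that $\cA/\cS \simeq \cM(Q\adi{f})$), with object $M = Q^m$ and endomorphisms $\alpha = A,\ \beta = B$. The hypotheses are readily verified: $A$ is injective on $Q^m$ because $Av = 0$ forces $fv = BAv = 0$ and hence $v = 0$ since $f$ is a non-zero-divisor; $B$ is injective by symmetry; and $\coker(A)$, $\coker(B)$ lie in $\cS$ because $fI_m = AB = BA$ kills each of them. Substituting $\alpha\beta = \beta\alpha = fI_m$ into the general mirror image sequence \eqref{E116b} replaces the middle term $\coker(\alpha\beta)$ by $\coker(fI_m) = R^m$, so the pair produced by Sherman's theorem is precisely the pair $(E, F)$ in the statement. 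Sherman's theorem therefore yields $\partial(\bar A \sstar \bar B) = \pm G(E, F)$ in $K_1(\cS)$, and devissage identifies $K_1(\cS)$ with $G_1(R) = K_1(\cM(R))$ in a manner that sends $G(E, F) \in K_1(\cS)$ to the class of the same name in $G_1(R)$, since $E$ and $F$ already consist of $R$-modules. Chaining these identities modulo $2$-torsion gives $2\,\partial(\can([A] \cup [f])) = \pm 2\,G(E, F)$ in $G_1(R)$, which is the claim.

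The main obstacle is the careful bookkeeping of signs and $2$-torsion along this chain of comparisons, and in particular ensuring that the functoriality of Sherman's star pairing is robust enough to commute with the canonical comparison $\can$ between the two natural models for $K_2$. A secondary subtlety is the choice of Serre subcategory $\cS$ used in the devissage step: one must take modules with support in $V(f)$ (rather than those literally annihilated by $f$) in order to have $\cA/\cS \simeq \cM(Q\adi{f})$, and then filter by powers of $f$ to reduce devissage to the standard identification $G_*(R) \cong K_*(\cM(R))$.
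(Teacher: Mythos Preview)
Your proposal is correct and follows essentially the same route as the paper's proof, which simply cites Sherman's Theorem~\ref{ShermansThm} and Corollary~\ref{Cor1}; you have merely unpacked the details (the injectivity of $A,B$, the identification of the mirror image sequence, and the d\'evissage step) that the paper leaves implicit. Note that your separate invocation of Theorem~\ref{SThm1} is redundant, since Corollary~\ref{Cor1} already records the conclusion $2\,A \sstar B = \pm 2\,[A]\cup[f]$.
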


\begin{proof}
This follows immediately from Sherman's Theorem \ref{ShermansThm}
and 
Corollary \ref{Cor1}.
\end{proof}

\section{Reinterpreting Theta} \label{sec:re}
In this section, we use the results of the previous section to give  a reformulation of the 
$\theta$-pairing in terms of more familiar $K$-theoretic constructions.
Our results here are valid for a  general hypersurface ring.

Let $Q$ be a regular (Noetherian) ring, $\fm$ a maximal ideal of $Q$, and $f \in \fm$ a non-
zero-divisor.
Set $R = Q/f$ and 
also write $\fm$ for the image of $\fm$ in $R$.
Recall that Hochster's $\theta$ invariant may be viewed as a 
bilinear pairing of the form
$$
\theta = \theta_{(Q,\fm,f)}: G_0(\Spec R \smm) \times K_0(\Spec R \smm) \lra \Z,
$$
and is determined by the formula
$$
(M,N) \mapsto \len \Tor_{2j}^R(M,N) - \len \Tor_{2j+1}^R(M,N), \, j \gg
0,
$$
where $M$ and $N$ are finitely generated $R$-modules such that 
$pd_{R_{\fp}}N_\fp < \infty$ for all $\fp \ne \fm$.
We will need to know the $\theta$ pairing factors through localization at $\fm$; in fact, more 
is true:

\begin{prop} \label{prop814} 
Let $Q$, $\fm$, $f$, and $R$ be as above. Suppose $Q \to Q'$ is a flat ring map such that 
$\fm' := \fm Q'$ is a maximal ideal of $Q'$ and $Q/\fm \xra{\cong} Q'/\fm'$ is an isomoprhism. 
Let $f' \in Q'$ be the image of $f$ in $Q'$ and set $R' = Q'/f' = Q' \otimes_Q R$.
The triangle
$$
\xymatrix{
G_0(\Spec(R) \smm) \times K_0(\Spec(R) \smm) \ar[rd]^{\phantom{XX} \theta^R} \ar[dd]^{\phi^* \times 
\phi^*} \\
& \Z \\
G_0(\Spec(R' \sm \fm')) \times K_0(\Spec(R') \sm \fm') \ar[ru]^{\theta^{R'}} \\
}
$$
commutes, where $\phi: \Spec(R') \sm \fm' \to \Spec(R) \sm \fm$ is the induced flat 
morphism.

In particular, the $\theta$ pairing factors through the localization at $\fm$; that is,
$$
\xymatrix{
G_0(\Spec(R) \smm) \times K_0(\Spec(R) \smm) \ar[rd]^{\phantom{XX} \theta^R} \ar[dd] \\
& \Z \\
G_0(\Spec(R_\fm) \smm) \times K_0(\Spec(R_\fm) \smm) \ar[ru]^{\theta^{R_\fm}} \\
}
$$
commutes.
\end{prop}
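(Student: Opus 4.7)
The plan is to prove the statement by reducing to the level of representing modules, applying flat base change for $\Tor$, and then observing that the residue-field hypothesis forces lengths to be preserved under tensoring up to $R'$.

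First I would choose representatives. Since $G_0(R) \onto G_0(\Spec(R) \smm)$ is surjective and any class in $K_0(\Spec(R) \smm)$ is represented by a coherent sheaf that extends to a finitely generated $R$-module, pick finitely generated $R$-modules $M$ and $N$ representing the given classes, with $N$ locally of finite projective dimension on $\Spec(R) \smm$. Set $M' := M \otimes_R R'$ and $N' := N \otimes_R R'$. Since $Q \to Q'$ is flat, so is $R \to R'$, and one checks routinely that $N'$ is locally of finite projective dimension on $\Spec(R') \sm \fm'$: if $\fp' \in \Spec(R') \sm \fm'$, then $\fp := \fp' \cap R$ cannot equal $\fm$ (otherwise $\fp' \supseteq \fm R' = \fm'$), and flatness of $R_\fp \to R'_{\fp'}$ transports a finite projective resolution of $N_\fp$ to one of $N'_{\fp'}$.

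Next I would apply flat base change for $\Tor$ to get a canonical isomorphism
$$
\Tor_i^{R'}(M',N') \;\cong\; \Tor_i^R(M,N) \otimes_R R' \qquad \text{for all } i.
$$
For $i \gg 0$, the module $T := \Tor_i^R(M,N)$ has finite length and is supported at $\fm$. The key observation is that under the hypotheses on $Q \to Q'$, the functor $T \mapsto T \otimes_R R'$ preserves length for such modules. Indeed, this functor is exact by flatness, and on the simple module $R/\fm$ it yields $R/\fm \otimes_R R' = R'/\fm R' = R'/\fm'$, which is simple as an $R'$-module because $R/\fm \xra{\cong} R'/\fm'$. Filtering $T$ by copies of $R/\fm$ then gives $\length_{R'}\bigl(T \otimes_R R'\bigr) = \length_R T$.

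Combining the previous two steps yields $\length_{R'}\Tor_i^{R'}(M',N') = \length_R \Tor_i^R(M,N)$ for all $i \gg 0$, whence
$$
\theta^{R'}(M',N') \;=\; \theta^R(M,N),
$$
proving the first diagram commutes. The second diagram is the special case $Q' = Q_\fm$, which satisfies the hypotheses trivially (flatness of localization, maximality of $\fm Q_\fm$, and the canonical isomorphism $Q/\fm \cong Q_\fm/\fm Q_\fm$). The only genuinely delicate point is the length-preservation step, which is precisely where the residue-field hypothesis $Q/\fm \xra{\cong} Q'/\fm'$ is used; the remaining ingredients are standard flat base change.
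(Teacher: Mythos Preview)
Your proof is correct and follows essentially the same approach as the paper: flat base change for $\Tor$ combined with the observation that the residue-field isomorphism forces length to be preserved under $-\otimes_R R'$ on $\fm$-supported modules. You supply a bit more detail than the paper (the check that $N'$ inherits finite projective dimension on the punctured spectrum, and the filtration-by-simples argument for length preservation), but the underlying argument is the same.
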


\begin{proof} The hypothesis $\fm Q' = \fm'$ implies  that $Q \to Q'$ does indeed determine 
a flat morphism $\phi: \Spec(R) \sm \fm \to \Spec(R') \sm \fm'$ of
  schemes.
Since $R \to R'$ is flat, we have an isomorphism
$$
\Tor^R_j(M,N) \otimes_R R' \cong \Tor^{R'}_j(M \otimes_R R', N \otimes_R R').
$$
Finally, for any finitely generated $R$-module $T$ supported at $\fm$, we have
$$
\len_R(T) = \len_{R'}(T \otimes_R R')
$$
since $R' \otimes_R R/\fm \cong R'/\fm R' = R'/\fm'$ by assumption.
\end{proof}

For any Noetherian scheme $U$, we write $K_*(U)$ for the (Quillen) $K$-groups of the exact 
category of locally free coherent sheaves on $U$ and $G_*(U)$ for the
$K$-groups of the abelian category of all coherent sheaves.
The groups $K_*(U)$ form a graded ring
with the multiplication rule, which we write as
cup product $- \cup -$, induced by tensor product. Tensor product also
defines the cap product pairing
$$
- \cap -: K_i(U) \times G_j(U) \to G_{i +j}(U)
$$
making $G_*(U)$ into a graded $K_*(U)$-module.

For any integer $\ell \geq 0$, let $K_*(U, \Z/\ell)$ and $G_*(U, \Z/\ell)$ denote
$K$-theory and $G$-theory with $\Z/\ell$
coefficients. These are  defined as the homotopy groups of 
spectra
obtains from the $\cK$- and
$\cG$-theory spectra by smashing them with the mod-$\ell$ Moore space.
There are long exact coefficient sequences
$$
\cdots \to K_m(U) \xra{\cdot \ell} K_m(U) \to K_m(U, \Z/\ell) \to
K_{m-1}(U) \to \cdots
$$
and similarly for $G$-theory.

We also will need the
mod-$\ell$ versions of the cup and cap product pairings. 
To avoid complications in the multiplication rule for
the Moore spaces for small primes, we assume for simplicity that
$\Z/\ell$ has no $2$- or $3$-torsion; i.e., 
either $\ell = 0$ or it is not divisible
by $2$ or $3$. In this case, $K_*(U, \Z/\ell)$ is a graded
ring under cup product 
and $G_*(U, \Z/\ell)$ is a module over this ring under cap product.
Moreover, $K_*(U) \to K_*(U, \Z/\ell)$ is a ring map and $G_*(U) \to
G_*(U, \Z/\ell)$ is $K_*(U)$-linear. 
See \cite[A.6]{Thomason} for more information.

The open complement of the closed subscheme $\Spec(R) \smm$ of
$\Spec(Q) \smm$ is $\Spec Q \ai{f}$ and (using that $Q$ is regular) we
have a 
long exact localization sequence in $K$-theory:
\begin{equation} \label{E813}
\begin{aligned}
\cdots \to & K_{i+1}(\Spec(Q)  \smm, \Z/\ell) \to K_{i+1}(\Spec Q\ai{f}, \Z/\ell) \map{\del} 
G_i(\Spec(R)
\smm, \Z/\ell) \\
& \to  K_{i}(\Spec(Q)  \smm, \Z/\ell) \to  \cdots. \\
\end{aligned}
\end{equation}
We also have the long exact localization
sequence in $G$-theory associated to the closed subscheme $\Spec(R/\fm)$ of $\Spec(R)$, 
\begin{equation} \label{E813b}
\cdots \to G_{i+1}(\Spec R, \Z/\ell) \to G_{i+1}(\Spec(R) \smm, \Z/\ell) \xra{\del'} 
K_i(R/\fm, \Z/\ell)
\to G_i(R, \Z/\ell) \to \cdots.
\end{equation}

\begin{defn} \label{tthetadef}
Assume $Q$ is a (not necessarily regular) Noetherian ring, $\fm$ is a maximal
  ideal, $f \in \fm$ is any element, and $\ell$ is an integer such that
  $\Z/\ell$ has no $2$- or $3$-torsion. Set $R = Q/f$. 
Let $\partial$ and $\partial'$ be the boundary maps in the $G$-theory localization sequences 
\eqref{E813} and \eqref{E813b}.

Define $\ttheta = \ttheta_{(Q,f,\fm), \ell, (i,j)}$ to be the pairing 
$$
\ttheta: G_i(Q\ad{f}, \Z/\ell) \times K_j(\Spec(R)
\setminus \fm, \Z/\ell) \to K_{i+j-1}(R/\fm, \Z/\ell)
$$
defined by the formula
$$
\ttheta(\alpha, \gamma) = \partial'\left(\partial(\alpha \cap [f]) \cap \gamma\right),
$$
where $[f]  \in K_1(Q \ad{f})$ denotes the class of the unit $f$ of $Q\ai{f}$. 
\end{defn}

Taking $i = 1$ and $ j = 0$, we have in particular the pairing
\begin{equation} \label{E32}
\ttheta  : G_1(Q\ad{f}, \Z/\ell) \times K_0(\Spec(R)
\setminus \fm, \Z/\ell) \to K_0(R/\fm, \Z/\ell) \cong \Z/\ell,
\end{equation}
where the isomorphism sends $[R/\fm]$ to $1 \in \Z/\ell$.
The goal of this section is to relate this pairing to Hochster's
$\theta$ pairing.

We shall need the following analogue of Proposition \ref{prop814} for
$\ttheta$:

\begin{prop} \label{prop815b} Suppose $(Q, \fm, f, \ell, R)$ are as in Definition \ref{tthetadef} 
and that
$Q \to Q'$ is a flat ring map such that 
$\fm' := \fm Q'$ is a maximal ideal of $Q'$ and $Q/\fm \xra{\cong} Q'/\fm'$ is an isomoprhism. 
Let $f' \in Q'$ be the image of $f$ in $Q'$ and set $R' = Q'/f' = Q' \otimes_Q R$.
Then the square
$$
\xymatrix{
G_i(Q\adi{f}, \Z/\ell) \times K_j(\Spec(R) \sm \fm, \Z/\ell) \ar[r]^{\phantom{XXXXX} \ttheta}  
\ar[d] & K_{i+1-1}(R/\fm, \Z/\ell) \ar[d]^{\cong} \\
G_i(Q'\adi{f'}, \Z/\ell) \times K_j(\Spec(R') \sm \fm', \Z/\ell) \ar[r]^{\phantom{XXXXX} \ttheta}  & 
K_{i+1-1}(R'/\fm', \Z/\ell) \\
}
$$
commutes.
\end{prop}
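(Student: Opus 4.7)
The plan is to verify the commutativity by checking, step by step, that each ingredient in the definition $\ttheta(\alpha,\gamma) = \partial'(\partial(\alpha \cap [f]) \cap \gamma)$ is natural under the given flat base change. First, I need to check that the relevant morphisms of schemes are well-defined and flat. The hypothesis $\fm Q' = \fm'$ ensures that $\fm'$ is the unique prime of $Q'$ lying over $\fm$, so the induced map $\Spec(R') \sm \fm' \to \Spec(R) \sm \fm$ is defined; it is flat because $Q \to Q'$ is. Similarly, $Q\adi{f} \to Q'\adi{f'}$ is flat (since $f \mapsto f'$), and the residue field identification $Q/\fm \xra{\cong} Q'/\fm'$ factors through $R/\fm \xra{\cong} R'/\fm'$, giving the claimed right vertical isomorphism.

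Next, I would verify the naturality of each constituent of $\ttheta$. The class $[f] \in K_1(Q\adi{f})$ pulls back to $[f'] \in K_1(Q'\adi{f'})$ tautologically. The cap (and cup) product in $K$-/$G$-theory with $\Z/\ell$-coefficients is natural for flat pullback, since it is induced by tensor product of complexes of flat modules. The boundary map $\partial$ in the $G$-theory localization sequence \eqref{E813} is natural with respect to flat base change: flat pullback induces an exact functor on the relevant abelian/exact categories and hence a map of fiber sequences of $K$-theory spectra. The same holds for $\partial'$ in the localization sequence \eqref{E813b} for the closed subscheme $\Spec(R/\fm) \hookrightarrow \Spec(R)$, whose base change along $\phi$ is $\Spec(R'/\fm') \hookrightarrow \Spec(R')$ under the identification $R/\fm \cong R'/\fm'$. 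Chaining these naturality squares yields the desired commutativity of the large square in the statement.

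The only real subtlety is the naturality of $\partial$ and $\partial'$ under flat base change, and more specifically the fact that the boundary maps in the $\Z/\ell$-coefficient localization sequences remain natural after smashing with a Moore space. This is standard, following from the functoriality of Thomason's construction (or Gillet's description of the localization boundary via resolutions by flat modules), together with the hypothesis that $\Z/\ell$ has no $2$- or $3$-torsion so that the $\Z/\ell$-pairings are compatible with pullback as stated in \cite[A.6]{Thomason}. Once these naturality statements are in hand, the diagram chase is immediate, and the second claim of the proposition then follows by specializing $Q' = Q_\fm$, which satisfies the required hypotheses.
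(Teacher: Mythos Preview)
Your proposal is correct and follows essentially the same approach as the paper: verify that the relevant squares of schemes are Cartesian (so that flat pullback commutes with the boundary maps $\partial$ and $\partial'$), note that $\phi^*$ commutes with cap product and sends $[f]$ to $[f']$, and chain these naturality statements together. The paper is slightly more explicit in writing out the Cartesian squares of schemes and the resulting homotopy-commutative diagrams of $\cG$-spectra, but the content is the same.

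One small correction: there is no ``second claim'' in this proposition. The statement about specializing to $Q' = Q_\fm$ belongs to the earlier Proposition~\ref{prop814} (for $\theta$ itself), not to Proposition~\ref{prop815b}; the paper instead follows this proposition with an example applying it to $Q' = Q^\hen_\fm$.
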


\begin{proof} Write $\phi: \Spec(Q') \to \Spec(Q)$ for the associated map of affine schemes.
The hypotheses ensure that each square in 
$$
\xymatrix{
\Spec(R') \sm \fm' \ar[r] \ar[d]^{\phi} & \Spec(Q') \sm \fm' \ar[d]^{\phi} \ar[r] & \Spec(Q') 
\ar[d]^{\phi} & \Spec(Q'\adi{f'}) \ar[l] \ar[d]^{\phi} \\
\Spec(R) \sm \fm \ar[r]  & \Spec(Q) \sm \fm \ar[r] & \Spec(Q) & \Spec(Q\adi{f}) \ar[l]  \\
}
$$
a pull-back. It follows that 
the diagram
$$
\xymatrix{
\cG(\Spec(R) \sm \fm) \ar[r] \ar[d]^{\phi^*} & \cG(\Spec(Q) \sm \fm) \ar[r] \ar[d]^{\phi^*}  & 
\cG(\Spec(Q\adi{f})) \ar[d]^{\phi^*}  \\
\cG(\Spec(R') \sm \fm) \ar[r]  & \cG(\Spec(Q') \sm \fm') \ar[r]  & \cG(\Spec(Q'\adi{f'}))  \\
}
$$
is homotopy commutative. This gives us that $\phi^*$ commutes with the boundary maps 
$\partial$ in the associated long exact sequence. For any flat map $\phi$,
$\phi^*$ commutes with $\cap$ product. This proves that
$$
\phi^*\left(\partial(\a \cap [f]) \cap \gamma\right) = 
\partial(\phi^*(\a) \cap [f']) \cap \phi^*(\gamma) \in G_{i+j}(\Spec(R') \sm \fm').
$$
Similarly, the diagram
$$
\xymatrix{
\cG(\Spec(R/m)) \ar[r] \ar[d]^{\phi^*} & \cG(R) \ar[d]^{\phi^*} \ar[r] & \cG(\Spec(R) \sm \fm) 
\ar[d]^{\phi^*} \\
\cG(\Spec(R'/m')) \ar[r] & \cG(R') \ar[r] & \cG(\Spec(R') \sm \fm') \\
}
$$
is homotopy commutative so that $\phi^*$ commutes with the boundary maps $\partial'$.
\end{proof}

\begin{ex} We will use the previous result, in particular, when $Q' = Q^\hen_\fm$, the Henselization of 
$Q$ at the maximal ideal $\fm$. 
That is, the $\ttheta$ pairing for $R = Q/f$ factors through its
  Henselization at $\fm$. 
\end{ex}

Assume $Q$ is a regular ring and $f \in Q$ is a non-zero-divisor.
Given a matrix factorization $(A,B)$ of $f$ in $Q$, the module $\coker(A)$ is
annihilated by $f$ and thus may be regarded as an $R$-module, where $R := Q/f$. It is
necessarily a
MCM $R$-module, as seen by the depth formula and the fact that
$pd_Q(M) = 1$. Moreover, if $Q$ is local, every MCM $R$-module is the
cokernel of some matrix factorization.
Recall that $A$ is an
  invertible matrix when regarded as a matrix with entries
  in $Q\ai{f}$ and we write $[A]$ for the class in $K_1(Q\ai{f})$ it determines. We 
  write $[A]_\ell$ for image of this class in
  $K_1(Q\ai{f}, \Z/ \ell)$. (Note that $[A]_0 = [A]$.)

For finitely generated $R$-modules $M$ and  $N$ 
 such that $N_\fp$ has finite
  projective dimension for all $\fp \ne \fm$,
let $\theta_\ell(M,N) \in \Z/\ell$ be the value of $\theta(M,N)$
modulo $\ell$. 

Recall that such a module $N$ determines a coherent sheaf on $\Spec(R) \smm$ that admits a finite resolution by locally free coherent sheaves, and hence $N$
determines a class $[N'] \in  K_0(\Spec(R)
  \smm)$. We write $[N']_\ell$ for the image of $[N']$ in $K_0(\Spec(R)
  \smm, \Z/\ell)$. (As before, $[N]_0 = [N]$.) 

\begin{thm} \label{Thm1}
Assume $Q$ is a regular ring, $\fm$ is a maximal ideal of $Q$ and $f \in \fm$ is a
non-zero-divisor. Let $R = Q/f$ and let $\fm$ also denote the image of $\fm$ in
$R$. Let $(A,B)$ be a matrix factorization of $f$ in $Q$  and let 
$M = \coker(A)$ be the associated 
 MCM $R$-module. Let
  $N$ be a finitely generated $R$-module such that $pd_{R_\fp}(N_\fp) < \infty$  
for all $\fp \ne \fm$, and let $[N']$ be the class it determines in $K_0(\Spec(R) \smm)$. 

For any integer  $\ell$ such that $\Z/\ell$ has no $2$- or $3$-torsion,
$$
\theta_\ell(M,N) = \pm  \ttheta_\ell([A]_\ell, [N']_\ell)
$$
\end{thm}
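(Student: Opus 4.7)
The plan is to unfold the definition of $\ttheta_\ell$ and match each term to Hochster's invariant via Corollary \ref{Cor2}, which provides the bridge between the $K$-theoretic construction and the matrix factorization.

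First, I apply Corollary \ref{Cor2} to the matrix factorization $(A, B)$, obtaining
\[
2\, \partial_0([A] \cup [f]) = \pm 2\, G(E,F) \in G_1(R),
\]
where $\partial_0$ is the boundary map in the $G$-theory localization sequence for the closed embedding $\Spec(R) \hookrightarrow \Spec(Q)$ and $(E, F)$ is the mirror image sequence induced by the matrix factorization:
\[
E: 0 \to M \to R^m \to M_1 \to 0, \qquad F: 0 \to M_1 \to R^m \to M \to 0,
\]
with $M = \coker(A)$ and $M_1 = \coker(B)$. Since $\ell$ is coprime to $2$, the factor of $2$ is invertible in $\Z/\ell$, and reduction modulo $\ell$ yields the identity without the factor of $2$. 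Naturality of boundary maps with respect to open restriction away from $\fm$ transfers this to
\[
\partial([A]_\ell \cup [f]_\ell) = \pm G(E,F)_\ell \in G_1(\Spec(R) \smm, \Z/\ell),
\]
where $\partial$ is as in \eqref{E813}.

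Next, I cap both sides with $[N']_\ell$. Because $N$ has finite projective dimension on $\Spec(R) \smm$, tensoring $E$ and $F$ with $N$ produces exact sequences of coherent sheaves on the punctured spectrum, giving a mirror image sequence
\[
E_N: 0 \to M \otimes_R N \to N^m \to M_1 \otimes_R N \to 0, \quad F_N: 0 \to M_1 \otimes_R N \to N^m \to M \otimes_R N \to 0.
\]
A standard argument using a finite locally free resolution of $N$ on $\Spec(R) \smm$ together with bimultiplicativity of the cap product yields $G(E,F)_\ell \cap [N']_\ell = G(E_N, F_N)_\ell$ in $G_1(\Spec(R) \smm, \Z/\ell)$.

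Finally, I compute $\partial' G(E_N, F_N)_\ell \in K_0(R/\fm, \Z/\ell) \cong \Z/\ell$. Lifting $E_N$ and $F_N$ to sequences of $R$-modules with the same three terms, the failure of exactness is measured --- via the long exact Tor sequences obtained by tensoring $E$ and $F$ with $N$ --- by $\Tor^R_1(M_1, N)$ and $\Tor^R_1(M, N)$ respectively, both of finite length. Translating via Sherman's identity $G(E_N, F_N) = m(l_{E_N,F_N}) - G(N^m, -1)$ to Nenashev's presentation of $G_1$ by double short exact sequences, and using the description of $\partial'$ on Nenashev generators whose terms extend to $\coh(R)$ but whose two short exact sequences fail to lift, one obtains
\[
\partial' G(E_N, F_N)_\ell = \pm \bigl([\Tor^R_1(M_1, N)] - [\Tor^R_1(M, N)]\bigr).
\]
Since $M_1$ is the first syzygy of $M$, $\Tor^R_1(M_1, N) = \Tor^R_2(M, N)$, and two-periodicity of Tor from degree $1$ onward identifies the right-hand side with $\pm \theta_\ell(M, N)$, as required. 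The main obstacle is this last step: realizing $\partial'$ on a mirror image sequence class as an Euler characteristic of the discrepancies between lifted exact sequences. Sherman's identity converts the mirror image class into Nenashev's framework, after which the boundary formula is a direct unwinding of definitions.
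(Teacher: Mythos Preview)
Your overall strategy matches the paper's: apply Corollary~\ref{Cor2}, restrict to the punctured spectrum, cap with $[N']$, and then compute the boundary $\partial'$. There are, however, two genuine gaps.

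First, a minor one: you dispose of the factor of $2$ by asserting that $2$ is invertible in $\Z/\ell$. The hypothesis ``$\Z/\ell$ has no $2$- or $3$-torsion'' includes the case $\ell = 0$, i.e.\ integral coefficients, where $2$ is certainly not invertible. The paper handles this correctly by first reducing to the integral case via compatibility of products with the coefficient maps, proving $2\theta(M,N) = \pm 2\ttheta([A],[N'])$ as an equation in $\Z$, and then dividing by $2$ in $\Z$.

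Second, and more seriously: the step where you compute $\partial' G(E_N,F_N)$ is precisely the heart of the theorem, and you have asserted it rather than proved it. You invoke ``the description of $\partial'$ on Nenashev generators whose terms extend to $\coh(R)$ but whose two short exact sequences fail to lift,'' as though there were a standard formula here --- but there is not. This computation is exactly the content of \cite[3.4]{BvS}, and it is not a direct unwinding of definitions. The paper makes this explicit: it introduces the Buchweitz--van Straten cyclic diagram $\zeta$ built from the two-periodic complex over all of $\Spec(R)$ (not just the punctured spectrum), carefully identifies the restriction $\{\zeta'\}$ with $G(E',F')\cap[N']$ up to the correction term $G(\cO_{V'}^r,-1)$ via Sherman's identity, observes that the correction term lifts and hence is killed by $\partial'$, and then cites \cite[Proof of 3.4]{BvS} for the equality $\partial'(\{\zeta'\}) = \theta(M,N)$. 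A related point: the paper first replaces $N'$ by a locally free sheaf on $V'$ (passing to a high syzygy, which is legitimate since both sides depend only on $[N']\in K_0(V')$), so that capping with $[N']$ is literally tensoring with an exact functor; your ``standard argument using a finite locally free resolution and bimultiplicativity'' to establish $G(E,F)\cap[N'] = G(E_N,F_N)$ for a general $N$ is not obviously valid, since $G(E_N,F_N)$ is not manifestly additive in $N$.
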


\begin{proof} The compatibility of cup and cap products with the
  canonical maps $K_*(U) \to K_*(U, \Z/\ell)$ implies that the diagram
$$
\xymatrix{
K_1(Q\ai{f}) \times K_0(\Spec(R) \smm) \ar[r]^{\phantom{XXXX} \ttheta} \ar[d] & G_0(R/\fm) 
\ar[d] \\
K_1(Q\ai{f}, \Z/\ell) \times K_0(\Spec(R) \smm , \Z/\ell)
\ar[r]^{\phantom{XXXXXXX} \ttheta_\ell} & G_0(R/\fm , \Z/\ell)  
}
$$
commutes. It therefore suffices to prove the theorem with integral coefficients.

To shorten notation, set $U = \Spec(Q)$, $U' = U \smm$, $V = \Spec(R)$ and $V' = V \smm$. Note that $U'$ is open in $U$, $V$ is closed in $U$ and $V' = V
\cap U$. 

Let $N'$ denote the coherent sheaf on $V'$ given by the restriction of $N$, viewed as a coherent sheaf on $V$,  to $V'$.
Since a high enough syzygy for an $R$-free resolution of $N$ will be
locally free on $V'$  and since the equation in question
depends only on the class of $N'$ in $K_0(V')$, we may
assume $N'$ is a locally free coherent sheaf on $V'$.

From Corollary \ref{Cor2} we have 
$$
2 \del([A] \cup [f]) = \pm 2 G(E,F)  \in  G_1(\Spec R) = G_1(V).
$$ 
where $E$ and $F$ are the short exact sequences of coherent sheaves on $V = \Spec(R)$ 
$$
E := (0 \to \coker(A) \to \cO_V^m \to \coker(B) \to 0)
$$
and
$$
F := (0 \to \coker(B) \to \cO_V^m \to \coker(A) \to 0). 
$$
(We are assuming $A$ and $B$ are $m \times m$ matrices.) 
The image of $G(E,F)$ under $G_1(V) \to G_1(V')$ is $G(E', F')$, where 
$$
E'  := \left( 0 \to \coker(A') \to \coker \cO_{V'}^m \to \coker(B')
  \to 0 \right)
$$
and
$$
F' := \left( 0 \to \coker(B') \to \coker \cO_{V'}^m \to \coker(A')
  \to 0\right),
$$
where $A'$ and $B'$ be the injective maps $\cO_U^m \into \cO_U^m$ 
induced by $A$ and $B$. 
Thus
$$
2 \del([A] \cup [f]) = \pm 2 G(E',F')  \in  G_1(V').
$$

We may describe the mirror image sequence $(E, F)$ in the following equivalent way.
Let $\overline{A}, \overline{B}$ be the matrices in $R = Q/f$ determined by $A,B$.
We have an unbounded, two-periodic exact sequence of locally free $\cO_V$-modules
\begin{equation} \label{E115}
\cdots \to \cO_{V}^r \map{\overline{A}} \cO_{V}^r \map{\overline{B}}
 \cO_{V}^r \map{\overline{A}} \cO_{V}^r \map{\overline{B}}  \cdots.
\end{equation}
Then $E$ and $F$ are the two canonical  short-exact sequences involving sysygies coming from this two periodic exact sequence.

The key result we need is that
$$
\del'(G(E',F') \cap [N']) = \theta(M,N),
$$
where $\del'$ is the boundary  map $G_1(V') \to G_0(R/\fm) \cong \Z$.
This is essentially proven by
Buchweitz and van Straten \cite[3.4]{BvS}, but they use the notation of double short exact sequences, and not that of mirror image sequences. We proceed to summarize their proof
in the notation we need.

Since $N'$ is locally free on $V'$, the exact sequences $E'$ and $F'$ remain exact upon tensoring with
$N'$ and it follows that 
$$
G(E', F') \cap [N'] = G(E'  \otimes_{\cO_{V'}} N', F' \otimes_{\cO_{V'}} N') \in G_1(V').
$$
Tensoring the sequence \eqref{E115} with $N$ yields the two periodic complex of coherent sheaves on $V$, 
$$
\cdots \xra{\overline{B} \otimes \id_N}  \cO_V^r \otimes_{\cO_V} N \xra{\overline{A} \otimes \id_N} 
\cO_V^r \otimes_{\cO_V} N \xra{\overline{B} \otimes \id_N} 
\cO_V^r \otimes_{\cO_V} N \xra{\overline{A} \otimes \id_N} 
\cdots
$$
and, again using that $N'$ is locally free on $V'$, this complex  is exact on the open subset $V'$ of $V$. From this complex, one builds what \BvS call a
``cyclic diagram'':
$$
\zeta := 
\left(\xymatrix{
0 \ar[r] &  \ker(\overline{A} \otimes \id_N) \ar[r] &  \cO_V^r \ar[r] \ar@{=}[d] &  \im(\overline{A} \otimes \id_N) \ar[r] \ar[d]^{\subseteq} &  0 \\
0        &  \im(\overline{B} \otimes \id_N) \ar[l] \ar[u]^\subseteq &  \cO_V^r \ar[l] &  \ker(\overline{B} \otimes \id_N) \ar[l] & \ar[l]  0 \\
}\right)
$$
(Note that the digram does {\em not} commute.) The left and right vertical arrows in this are the canonical inclusions.
\BvS define $\{\zeta\} \in G_1(R) = G_1(V)$ to be $m(l_\zeta)$ where $l_\zeta$  a certain double short exact sequence associated to a cyclic diagram $\zeta$; see
\cite[p. 247]{BvS} for the precise formula.

The image of $\{\zeta\}$ under $G_1(V) \to G_1(V')$ is $\{\zeta'\}$ where
$$
\zeta' := 
\left(\xymatrix{
0 \ar[r] &  \ker(\overline{A}' \otimes \id_{N'}) \ar[r] &  \cO_{V'}^r \ar[r] \ar@{=}[d] &  \im(\overline{A}' \otimes \id_{N'}) \ar[r] \ar@{=}[d] &  0 \\
0        &  \im(\overline{B}' \otimes \id_{N'}) \ar[l] \ar@{=}[u] &  \cO_{V'}^r \ar[l] &  \ker(\overline{B}' \otimes \id_{N'}) \ar[l] & \ar[l]  0 
}\right)
$$
Observe that the vertical arrows are now all identity maps, and thus we may interpret $\zeta'$ as determining the  mirror image sequence 
$(E'',F'')$ where
$$
E'' := (0 \to \ker(\overline{A}' \otimes \id_{N'}) \xra{\subseteq} \cO_{V'}^r \xra{\overline{A'} \otimes \id_{N'}} \im(\overline{A}' \otimes \id_{N'}) \to 0)
$$
and
$$
F'' := (0 \to \im(\overline{A}' \otimes \id_{N'}) \xra{\subseteq} \cO_{V'}^r \xra{\overline{B'} \otimes \id_{N'}} \ker(\overline{A}' \otimes \id_{N'}) \to 0).
$$
Since $\ker(\overline{A}') = \im(\overline{B}') \cong \coker(\overline{A}')$
and $\ker(\overline{B}') = \im(\overline{A}') \cong \coker(\overline{B}')$
we have an isomorphism of mirror image sequences
$$
(E'', F'') \cong (E'  \otimes_{\cO_{V'}} N', F' \otimes_{\cO_{V'}} N').
$$
Moreover, by comparing the definitions $l_{\zeta'}$ and $l_{(E'',F'')}$ 
(see \cite[p. 247]{BvS} and \eqref{E115c}), it is clear that 
$$
l_{\zeta'} = - l_{(E'',F'')}.
$$ 
  
Using also that $m(l_{(E'',F'')}) = G(E'', F'') + G(\cO_{V'}^r,-1)$ \cite[p. 164]{ShermanMirror} we obtain 
$$
-\{\zeta'\} = (G(E',F') \cap [N'])  + G(\cO_{V'}^r, -1) \in G_1(V').
$$
The class $G(\cO_{V'}^r, -1)$ clearly lifts along $G_1(V) \to G_1(V')$ to the class $G(\cO_V^r, -1)$ and hence $\del'(G(\cO_{V'}^r, -1)) = 0$ and
$$
\del'(G(E',F') \cap [N'])  = \del'(\zeta').
$$
Finally,
\BvS show \cite[Proof of 3.4]{BvS} that
$$
\del'(\{\zeta'\}) = \theta(M,N).
$$

We have proven that 
$$
2 \theta(M,N) = \pm 2  \del'\left(\del([A] \cup [f]) \cup
  [N']\right) = \pm 2 \ttheta_\ell(M,N)
$$
and since this is an equation in $\Z$, we may divide by $2$.
\end{proof}

\begin{cor} \label{Cor3}
Let $Q$ be a regular ring, $\fm$ a maximal ideal of $Q$, $f \in \fm$ a 
  non-zero-divisor, $R = Q/f$, and $\ell \geq 0$ an integer such that
  $\Z/\ell$ is without $2$- and $3$-torsion.
The diagram
$$
\xymatrix{
K_1(Q\ad{f}, \Z/\ell) \times K_0(\Spec(R) \setminus \fm , \Z/\ell) \ar[d]^{\del
  \times id} \ar[rd]^{\phantom{XXXXX} \ttheta} \\
G_0(\Spec(R) \setminus \fm , \Z/\ell) \times K_0(\Spec(R) \setminus
\fm , \Z/\ell) \ar[r]_{\phantom{XXXXXXXXXXXXXX} \theta} & \Z/\ell
}
$$
commutes up to a sign.
\end{cor}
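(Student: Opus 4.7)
Both $\ttheta$ and the composite $\theta_\ell \circ (\partial \times \mathrm{id})$ are $\Z/\ell$-bilinear pairings on $K_1(Q\adi{f}, \Z/\ell) \times K_0(\Spec(R) \smm, \Z/\ell)$, and Theorem \ref{Thm1} shows they agree up to a sign on pairs $(\alpha, \gamma) = ([A]_\ell, [N']_\ell)$, where $(A,B)$ is a matrix factorization of $f$ and $N$ is a finitely generated $R$-module with $N_\fp$ of finite projective dimension for all $\fp \ne \fm$. For the second variable, such classes $[N']$ generate $K_0(\Spec(R)\smm)$, since every locally free coherent sheaf on the punctured spectrum extends to a finitely generated $R$-module that is automatically locally of finite projective dimension away from $\fm$; the $\Z/\ell$ version follows from the universal coefficient sequence. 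The substantive content of the proof is therefore to show that $\ttheta$ itself factors through $\partial$, i.e., that $\ttheta(\alpha, \gamma) = 0$ whenever $\partial\alpha = 0$.

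To prove this, suppose $\partial\alpha = 0$, so $\alpha$ lifts to some $\tilde\alpha \in K_1(\Spec(Q)\smm, \Z/\ell)$. Suslin's projection formula \cite[1.1]{Suslin}, together with the observation that $\partial[f] = [\cO_{\Spec(R) \smm}]$ (the cokernel of multiplication by $f$ on $Q$, restricted to the punctured spectrum of $R$), yields
$$
\partial(\alpha \cap [f]) \;=\; i^*(\tilde\alpha) \;\in\; G_1(\Spec(R) \smm, \Z/\ell),
$$
where $i \colon \Spec(R) \hookrightarrow \Spec(Q)$ denotes the closed immersion. Since $N$ has finite projective dimension over the regular ring $Q$, the class $\gamma = [N']_\ell$ lifts further to $\tilde\gamma := [N]_{\Spec(Q)} \in K_0(\Spec(Q), \Z/\ell)$. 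A second application of the projection formula reduces $\ttheta(\alpha, \gamma) = \partial'(i^*\tilde\alpha \cap \gamma)$ to $\partial'(i^*(\tilde\alpha \cap \tilde\gamma))$, and by naturality of the localization boundary map under the derived pullback $i^*$ this equals $i^*(\partial''(\tilde\alpha \cap \tilde\gamma))$, where $\partial''$ is the boundary in the localization sequence for $\Spec(Q)$ relative to $\{\fm\}$.

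The target group $K_0^{\{\fm\}}(\Spec(Q), \Z/\ell)$ is generated by $[k]$ with $k = Q/\fm$, so it suffices to compute $i^*[k]$ in $G_0(R/\fm, \Z/\ell) = \Z/\ell$. Using the resolution $0 \to Q \xra{f} Q \to R \to 0$ and the fact that $f \in \fm$ acts as zero on $k$, one finds $\mathrm{Tor}_0^Q(k,R) = k = \mathrm{Tor}_1^Q(k,R)$, whence $i^*[k] = [k] - [k] = 0$. Consequently $\ttheta(\alpha, \gamma) = 0$, and the factorization of $\ttheta$ through $\partial$ is established.

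With this factorization in hand, both pairings descend to $\mathrm{image}(\partial) \times K_0(\Spec(R)\smm, \Z/\ell) \to \Z/\ell$, where by Theorem \ref{Thm1} they agree up to a fixed sign on the generators $(\partial[A]_\ell, [N']_\ell)$; bilinearity then finishes the argument. The main technical obstacle, I expect, is the rigorous verification of the naturality of the boundary maps under the derived pullback along the regular embedding $i$, and the compatibility of cap products and the projection formula with mod-$\ell$ coefficients; a reduction to the Henselization of $Q$ at $\fm$ via Proposition \ref{prop815b} may streamline some of these verifications.
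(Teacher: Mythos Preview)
Your overall strategy---show that $\ttheta$ factors through $\partial$, then invoke Theorem~\ref{Thm1} on the image---is reasonable, but the execution of the factorization step contains a genuine error. The class $\tilde\gamma := [N]_{\Spec(Q)}$ is not a lift of $\gamma$ along $i^*$; it is the pushforward $i_*\gamma$. Its derived pullback is
\[
Li^*[N]_Q \;=\; [\Tor_0^Q(N,R)] - [\Tor_1^Q(N,R)] \;=\; [N]-[N] \;=\; 0,
\]
not $\gamma$, so the identity $i^*\tilde\alpha \cap \gamma = i^*(\tilde\alpha \cap \tilde\gamma)$ fails. The subsequent ``naturality of $\partial'$ under derived pullback $i^*$'' is also not available in the form you use: the square
\[
\xymatrix{
\Spec(R/\fm) \ar[r] \ar[d]_{=} & \Spec(R) \ar[d]^{i} \\
\Spec(Q/\fm) \ar[r] & \Spec(Q)
}
\]
is \emph{not} Cartesian, and the induced map on $G_0$ of the closed point is the identity, not the ambient $Li^*$ whose vanishing on $[k]$ you compute. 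So the calculation $i^*[k]=0$, while correct as a statement about $Li^*: G_0(Q)\to G_0(R)$, is applied in the wrong place.

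The paper's argument avoids this by first localizing at $\fm$ (via Propositions~\ref{prop814} and~\ref{prop815b}) and then decomposing $K_1(Q\adi{f})$ into matrix-factorization classes plus the image of $K_1(Q)$ (using the localization sequence for $\Spec(R)\subset\Spec(Q)$, so that the kernel is exactly $\im K_1(Q)$, not the larger $\im K_1(\Spec(Q)\smm)$). On $\im K_1(Q)$ both sides vanish: $\partial[T]=0$ trivially, and for $\ttheta$ one observes that $\overline{[T]}\in K_1(\Spec(R)\smm)$ lifts to $K_1(R)$, so $\overline{[T]}\cap[N]$ lifts to $G_1(R)$ and is annihilated by $\partial'$. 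Your factorization claim can in fact be repaired by using pushforward instead: $\partial'(i^*\tilde\alpha\cap\gamma) = \partial''\bigl(i_*(i^*\tilde\alpha\cap\gamma)\bigr) = \partial''(\tilde\alpha\cap i_*\gamma)$, and in the local case $i_*\gamma=[N]_Q=0$ in $K_0(Q)\cong\Z$ since $N$ is $f$-torsion. But as written, the argument does not go through.
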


\begin{proof} By Propositions \ref{prop814} and \ref{prop815b}, $\theta$ and $\ttheta$ 
factor
through the localization of $R$ at $\fm$, and hence 
we may  assume $Q$ is local.

Theorem \ref{Thm1} shows that $\ttheta_\ell(A,N) = \theta_\ell(\coker(A),
  N)$ for all matrix factorizations $(A,B)$ of $f$ in $Q$ and all $N$.
  An arbitrary class of $K_1(Q\adi{f})$ is determined by an invertible matrix $T$ with entries in $Q\adi{f}$ (not necessarily coming from a matrix
  factorization). 
In $G_0(R)$, we may
  represent $\partial([T])$ as the difference of the classes of MCM
  modules. Since $Q$ is assumed local, every MCM
  $R$-module is represented as the cokernel of a matrix
  factorization. It follows that $K_1(Q\ad{f})$ is generated by
  classes coming from matrix factorizations of $f$ and the image of
  $K_1(Q) \to K_1(A \ad{f})$. It therefore suffices to prove $\ttheta$
  annihilates the image of $K_1(Q)$. 

Let then $T$ be an invertible matrix with entries in $Q$.
Using \cite[1.1]{Suslin}, we have 
$$
\ttheta([T], N)  = \del'(\partial([T] \cup [f]) \cap [N])  = \del'((\overline{[T]} \cap \partial [f]) \cap [N])
$$
where $\overline{[T]}$ denotes the image of $[T]$ under the canonical map $K_1(Q) \to K_1(\Spec(R) \sm \fm)$.  The map $\partial$ factors as
$$
K_1(Q\adi{f}) \xra{\tilde{\partial}} G_0(R) \xra{\can} G_0(\Spec(R) \smm). 
$$
Since $\tilde{\partial}([f]) = [R]$ we have $\overline{[T]} \cap \partial [f] = \overline{[T]}$ and hence
$$
\ttheta([T], N)  =  \del'(\overline{[T]} \cap [N]).
$$
Finally, since  $\overline{[T]}$ and $[N]$ lift to elements of $K_1(R)$
and $G_0(R)$, respectively, $\overline{[T]} \cap [N] \in G_1(\Spec(R) \smm)$ lifts to an element of $G_1(R)$. It follows that
$\del'(\overline{[T]} \cap [N]) = 0$.
\end{proof}

Let us indicate how the previous result relates to the work of Buchweitz and van Straten, and least on  the level of analogy. We start with:

\begin{prop} Assume $(Q, \fm, k)$ is a Henselian, regular local ring with algebraically closed residue field $k$ and
$0 \ne f \in \fm$. Set $R = Q/f$. 
Assume 
$\ell$ is a  positive integer that is relatively prime to $\chr(k)$ and is such that 
$[R/\fm] = 0$ in $G_0(R, \Z/\ell)$.

Then the boundary map
\begin{equation} \label{E1219}
K_1(Q\ad{f}, \Z/\ell) \xra{\cong} G_0(\Spec(R) \setminus \fm, \Z/\ell)
\end{equation}
is an isomorphism. 
\end{prop}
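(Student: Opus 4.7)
The plan is to factor the boundary map $\del$ as a composition of two isomorphisms arising from distinct localization sequences. First I would establish the vanishing $K_1(Q, \Z/\ell) = 0$. Since $(Q,\fm,k)$ is Henselian local and $\ell$ is invertible in $k$ (hence in $Q$, as $\ell \notin \fm$), Gabber's rigidity theorem yields $K_*(Q, \Z/\ell) \cong K_*(k, \Z/\ell)$; as $k$ is algebraically closed, $K_1(k) = k^\times$ is $\ell$-divisible and $K_0(k) = \Z$ has no $\ell$-torsion, so the universal coefficient sequence gives $K_1(k,\Z/\ell) = 0$. (Alternatively, $K_1(Q) = Q^\times$ is $\ell$-divisible directly via Hensel's lemma, since both $k^\times$ and $1 + \fm$ are.)

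Next I would apply the $G$-theory localization long exact sequence for the closed immersion $V(f) \hookrightarrow \Spec(Q)$, using $G_* = K_*$ on the regular ring $Q$:
$$
K_1(Q, \Z/\ell) \to K_1(Q\ad{f}, \Z/\ell) \xra{\tilde\del} G_0(R, \Z/\ell) \xra{i_*} K_0(Q, \Z/\ell).
$$
The first arrow has trivial source by the preceding step, so $\tilde\del$ is injective. To see $i_* = 0$, observe that any finitely generated $R$-module is annihilated by the $Q$-regular element $f$ and is therefore $Q$-torsion; under the rank identification $K_0(Q) \cong \Z$, torsion $Q$-modules contribute zero. Hence $\tilde\del$ is an isomorphism.

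Finally, I would use the localization sequence for the closed point $\{\fm\} \hookrightarrow \Spec(R)$,
$$
K_0(R/\fm, \Z/\ell) \to G_0(R, \Z/\ell) \to G_0(\Spec(R) \smm, \Z/\ell) \to 0,
$$
whose first arrow sends the generator to $[R/\fm]$, which vanishes by hypothesis. Thus the restriction $G_0(R, \Z/\ell) \to G_0(\Spec(R) \smm, \Z/\ell)$ is an isomorphism. By naturality of the localization sequence under the open immersion $\Spec(Q) \smm \hookrightarrow \Spec(Q)$, the map $\del$ of the proposition equals the composition
$$
K_1(Q\ad{f}, \Z/\ell) \xra{\tilde\del} G_0(R, \Z/\ell) \xra{\cong} G_0(\Spec(R) \smm, \Z/\ell),
$$
which is an isomorphism. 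The main obstacle is the vanishing $K_1(Q, \Z/\ell) = 0$, which rests on Gabber (or Suslin) rigidity; the remainder is a straightforward diagram chase, with the hypothesis $[R/\fm] = 0$ entering only to identify $G_0(R, \Z/\ell)$ with $G_0(\Spec(R) \smm, \Z/\ell)$.
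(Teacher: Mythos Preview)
Your proof is correct. Both you and the paper rely on the same two essential inputs --- Gabber's rigidity giving $K_1(Q,\Z/\ell)=0$, and the hypothesis $[R/\fm]=0$ in $G_0(R,\Z/\ell)$ --- but you organize the argument differently, and more efficiently.

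The paper works entirely on the punctured side: it assembles the three localization sequences for $\Spec(R/\fm)\subset\Spec(R)$, for $\Spec(Q/\fm)\subset\Spec(Q)$, and for $\Spec(R)\smm\subset\Spec(Q)\smm$ into one commutative diagram, then chases through it to show that the maps $G_1(\Spec(Q)\smm,\Z/\ell)\to G_1(Q\adi{f},\Z/\ell)$ and $G_0(\Spec(R)\smm,\Z/\ell)\to G_0(\Spec(Q)\smm,\Z/\ell)$ both vanish. This forces the boundary map in the punctured column to be an isomorphism. Your route instead uses the \emph{unpunctured} localization sequence for $\Spec(R)\subset\Spec(Q)$, where the two needed vanishings ($K_1(Q,\Z/\ell)=0$ and $i_*\colon G_0(R,\Z/\ell)\to K_0(Q,\Z/\ell)=\Z/\ell$ is zero by the rank argument) are immediate, and then passes to the punctured spectrum in a single naturality step. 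This avoids the diagram chase entirely. The paper's arrangement has the mild advantage of making explicit how the boundary $\partial$ on the punctured spectrum sits among the other localization maps, which is thematically relevant to the surrounding discussion of $\ttheta$; your version isolates exactly what is needed for the isomorphism itself.
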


\begin{rem} Before proving this, let us show $[R/\fm] = 0$  in $G_0(R, \Z/\ell)$, at least when $\dm(R) > 0$,  for all but a finite number of primes $\ell$, so that the
  hypotheses of the Proposition are often met. 

Let $\fp$ be a prime in $R$ of height $\dm(R) - 1$ (recall that we assume $\dm(R) > 0$) and choose $g \in \fm \setminus \fp$. Then the short exact
  sequence
$$
0 \to R/\fp \xra{g} R/\fp \to R/(\fp, g) \to 0
$$
gives $[R/(\fp, g)] = 0$ in $G_0(R)$. But $R/(\fp, g)$ is a finite length $R$-module, say of length $N$, and hence 
$[R/(\fp, g)] = N [R/\fm]$. 
This proves $N [R/\fm] = 0$ in $G_0(R)$ and thus if $\ell$ is a prime such that $\ell \nmid N$, then $[R/\fm]$ is divisible by $\ell$ in $G_0(R)$.
\end{rem}

\begin{proof} 
The localization sequences for $G$-theory (with $\Z/\ell$ coefficients), and their naturality,  applied to the following schemes, closed subschemes, and open complements
$$
\Spec(R/\fm) \subseteq \Spec(R) \supseteq \Spec(R) \setminus \fm,
$$
$$
\Spec(Q/\fm) \subseteq \Spec(Q) \supseteq \Spec(Q) \setminus \fm, \text{ and}
$$
$$
\Spec(R) \setminus \fm \subseteq \Spec(Q) \setminus \fm \supseteq \Spec(Q\adi{f})
$$
yield the commutative diagram
$$
\xymatrix{
G_1(R) \ar[d] \ar[r] & G_1(R \setminus \fm)  \ar[r] \ar@{->>}[d] & G_0(R/\fm) \ar[r]^{0} \ar[d]^{\cong} & G_0(R) \ar[d] \\
G_1(Q) \ar[r]^0 & G_1(Q \setminus \fm) \ar[r]^{\cong} \ar[d]^0 & G_0(Q/\fm) \ar[r] & G_0(Q) \\
& G_1(Q\adi{f}) \ar[d] \\
& G_0(R \setminus \fm)  \ar[d]^0\\
& G_0(Q \setminus \fm) \ar[d]^{\cong} \\
& G_0(Q\adi{f}) \\
}
$$
with exact rows and column.
(Here, we have suppressed the coefficients of $\Z/\ell$ and written 
$R \setminus \fm$ and 
$Q \setminus \fm$ for the non-affine schemes 
$\Spec(R) \setminus \fm$ and $\Spec(Q) \setminus \fm)$.) The Proposition follows immediately from this diagram, provided we justify the labels.

The hypothesis that $[R/\fm] = 0$ in $G_0(R, \Z/\ell)$ gives as that  
$G_0(R/\fm, \Z/\ell) \to G_0(R, \Z/\ell)$ is the zero map, as labelled. The map
$G_1(Q, \Z/\ell) \to G_1(Q \setminus \fm, \Z/\ell)$ is the zero map since
$$
G_1(Q, \Z/\ell) \cong K_1(Q, \Z/\ell) \cong K_1(Q/\fm, \Z/\ell) = 0
$$ 
by \cite{GabberHenelian} and \cite{Suslin}.
A diagram chase then gives that $G_1(R \setminus \fm, \Z/\ell) \to G_1(Q \setminus \fm, \Z/\ell)$ is onto and hence that
$G_1(Q \setminus \fm, \Z/\ell) \to G_1(Q\adi{f}, \Z/\ell)$ is the zero map. 
The group $G_0(Q \setminus \fm, \Z/\ell) \cong K_0(Q \setminus \fm, \Z/\ell)$ is isomorphic to $\Z/\ell$ and is generated by the class of the structure sheaf,
since $G_0(Q, \Z/\ell) \cong K_0(Q, \Z/\ell) \cong \Z/\ell$ (using that  $Q$ is local) and 
$K_0(Q, \Z/\ell)  \to K_0(Q \setminus \fm, \Z/\ell)$ is onto.
It follows that $K_0(Q \setminus \fm, \Z/\ell) \to K_0(Q\adi{f}, \Z/\ell)$ is an isomorphism and hence that
$G_0(R \setminus \fm, \Z/\ell) \to G_0(Q \setminus \fm, \Z/\ell)$ is the zero map.
\end{proof}

Assuming the hypotheses of the Proposition are met and also that $\ell$ is not divisible by $2$ or $3$, then the mod-$\ell$ theta pairing, 
$$
\theta_\ell: G_0(\Spec(R) \setminus \fm, \Z/\ell) \times
G_0(\Spec(R) \setminus \fm, \Z/\ell) \to \Z/\ell,
$$
may be given by the formula 
\begin{equation} \label{E17}
\theta_\ell(\a, \b) = \langle \gamma(\a), \b \rangle.
\end{equation}
Here, we define
\begin{equation} \label{E116d}
\gamma: G_0(\Spec(R) \setminus \fm, \Z/\ell) \to G_1(\Spec(R) \setminus \fm, \Z/\ell)
\end{equation}
as the composition of the inverse of the 
isomorphism \eqref{E1219} and the map 
\begin{equation} \label{E116c}
K_1(Q\ad{f}, \Z/\ell) \xra{\partial(- \cup [t])} G_1(\Spec(R) \setminus \fm, \Z/\ell).
\end{equation}
(In the next section, we note that the latter map is the ``specialization map'' in $K$-theory, and we will explore its properties.)
The pairing
$$
\langle -,- \rangle: G_1(\Spec(R) \setminus \fm, \Z/\ell) \times G_0(\Spec(R) \setminus \fm, \Z/\ell) \to \Z
$$
is the composition of cup product, the boundary map 
$G_1(\Spec(R) \setminus \fm, \Z/\ell) \xra{\partial'} G_0(R/\fm, \Z/\ell)$ and the canonical isomorphism
$G_0(R/\fm) \cong \Z/\ell$. 

The isomorphism \eqref{E1219} is the analogue of the Alexander duality isomorphism $H^{2m+1}(S \setminus L) \xra{\cong} H^{2m}(L)$ and the map \eqref{E116c}
is the analogue of the map 
$\rho_t^*: H^{j+1}(S \setminus L) \to H^{j+1}(L)$ induced by the ``push aside'' map $\rho_t: L \to \partial_t F \subseteq (S \setminus L)$.
Thus the map $\gamma$ defined in \eqref{E116d} is the analogue of the map 
$$
\gamma^{\top}: H^{j}(L) \to H^{j+1}(L) 
$$
occurring in the work of \BvSns. They prove \cite[5.1]{BvS}
$$
\link(\a, \b) = \langle \gamma^\top(\a) , \b \rangle
$$
for classes $\a, \b$ in $H^{n-1}(L) \cong H_n(L)$, where $\link$ is the linking form on $H_n(L)$. (The linking form may be defined as the restriction of the
Seifert form, defined
on the homology of the Milnor fiber $H_n(F_t)$, along the canonical map $H_n(\partial(F_t)) \to H_n(F_t)$, using the diffeomorphism $L \cong \partial F_t$.)

We may thus interpret Corollary \ref{Cor3} as saying that the mod-$\ell$ theta pairing is the analogue for algebraic $K$-theory with finite coefficients 
of the linking form in topology.

\section{Specialization with finite coefficients (Following Suslin)}

In the previous section we have given a new interpretation of the $\theta$ pairing in terms of 
the pairing $\ttheta$ defined as
$$
\ttheta(\a, \g) = \partial'(\partial(\a \cap [f]) \cap \g).
$$
Under some additional assumptions, a portion of this formula, namely the map 
$$
\sigma: G_i(Q\adi{f}, \Z/\ell) \to G_i(\Spec(R) \sm \fm, \Z/\ell)
$$
sending $\a$ to $\partial(\a \cap [f])$, can be interpreted as a {\em specialization map}. The 
goal of this section is to make this precise and to establish
properties of this specialization map. When working with the algebraic analogue of the Milnor fiber, 
the specialization map is the analogue of the map 
$$
\rho_t^*: K_\top^1(X^*) \to K_\top^1(L)
$$
occurring in the work of Buchweitz and van Straten. As mentioned, the map $\rho_t^*$ factors 
through the topological $K$-theory of the Milnor fiber. We prove the
analogous fact here, by showing, under certain hypotheses, that the specialization map 
$\sigma$ factors through the $G$-theory of the  algebraic analogue of the Milnor fiber.
Our notion of specialization in $K$- and $G$-theory is based on work of Andrei Suslin 
\cite{Suslin}.

Define a {\em pointed curve} to be a pair $(C,c)$, where $C$ is an affine, Noetherian, integral 
scheme that is regular of dimension one and
  $c \in C$ is a closed point such that the associated maximal ideal is principle and the 
residue field $\kappa(c)$ is algebraically closed. 
Equivalently, a pointed curve is a pair $(V, \fn)$ where $V$ is a Dedekind domain and $\fn$ is 
a principle maximal ideal of $V$ such that 
$V/\fn$ is an algebraically closed field. We will use both notations $(C,c)$ and $(V, \fn)$ 
interchangeably. We typically write $t$ for a
chosen generator of $\fn$, but such a choice is not part of the defining data.

A typical example occurs when $V$ is a dvr with algebraically closed residue field (and hence 
$t$ is a uniformizing parameter), but we do not limit ourselves to this case.

We will use the exact sequence
$$
K_1(C) \xra{i^*} K_1(C \sm c) \xra{\partial} K_0(c),
$$
equivalently, the exact sequence
$$
K_1(V) \xra{i^*} K_1(V\adi{t}) \xra{\partial} K_0(V/\fn),
$$
coming from a portion of the long exact localization sequence in $K$-theory. (Since $V$, 
$V\adi{t}$ and $V/\fn$  are all regular, their $K$- and $G$-theories coincide). 
The map $\partial$ sends $[t] \in K_1(V\adi{t})$ to $[V/\fn] \in K_0(V/\fn)$ and $[V/\fn]$ is a 
generator of $K_0(V/\fn) \cong \Z$.

Given a pointed curve $(C,c)$ and a morphism of Noetherian schemes $p: X \to C$, write 
$X_c$ for the fiber over $c \in C$ and $X \setminus X_c$ for its open
complement in $X$. Observe that if $V$ is a dvr, then $X_c$ is the closed fiber and $X 
\setminus X_c$ is the generic fiber. If $X = \Spec(Q)$ is affine, so
that $p$ is given by a ring map $V \to Q$, then $X_c = \Spec(Q/f)$ and $X \setminus X_c = 
\Spec(Q\adi{f})$, where $f \in Q$ is the image of a chosen generator
$t$ of $\fn$. 

Similarly, 
given a ring map $V \to Q$ and a maximal ideal $\fm$ of $Q$ such that $\fm \cap V = \fn$, 
then setting $X = \Spec(Q) \sm \fm$, we have $X_c = \Spec(Q/f) \sm
\fm$ and $X \setminus X_c = \Spec(Q\adi{f})$. This is the main example we have in 
mind, but for most of this section, 
we allow $X$ to be an arbitrary
Noetherian scheme.

\begin{defn} Given a pointed curve $(C,c)$ corresponding to $(V,\fn)$, a morphism of Noetherian schemes $p: X 
\to C$, and a
  positive integer $\ell$ such that $\chr(\kappa(c)) \nmid \ell$, 
define the {\em specialization map in $G$-theory} (for $X$ with $\Z/\ell$ coefficients) 
to be the map
$$
\sigma = \sigma_{C,c,p}:
G_i(X \setminus X_c, \Z/\ell) \to 
G_i(X_c, \Z/\ell)
$$
given as follows.
Choose $z$ to be any element of $K_1(V\adi{t})$ that maps to $1 \in \Z$ under $\partial: 
K_1(V\adi{t}) \to K_0(V/\fn) = \Z$; for example, $z$ could be $[t]$. 
Then $\sigma$ is the composition of 
$$
G_i(X \setminus X_c, \Z/\ell) \xra{- \cap z}
G_{i+1}(X \setminus X_c, \Z/\ell) \xra{\partial}
G_i(X_c, \Z/\ell)
$$
where $\partial$ is the boundary map in the localization long exact sequence in $G$-theory 
associated to the closed subscheme $X_c$ of $X$.
\end{defn}

\begin{lem}[Suslin] The specialization map $\sigma_{C,c,p}$ is independent of the choice 
of $z$.
\end{lem}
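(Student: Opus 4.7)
The plan is to show that if $z, z' \in K_1(V\adi{t})$ both satisfy $\partial(z) = \partial(z') = 1$, then they induce the same specialization map. By exactness of the localization sequence $K_1(V) \xra{i^*} K_1(V\adi{t}) \xra{\partial} K_0(V/\fn)$, we may write $z - z' = i^*(w)$ for some $w \in K_1(V)$. It therefore suffices to show that for every such $w$, the operator
$$
\alpha \longmapsto \partial\bigl(\alpha \cap (p^{\circ})^*(i^*(w))\bigr)
$$
on $G_*(X \sm X_c, \Z/\ell)$ vanishes, where $p^{\circ}: X \sm X_c \to C \sm c$ is the restriction of $p$.

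Let $j: X \sm X_c \hookrightarrow X$ and $i_X: X_c \hookrightarrow X$ denote the open and closed inclusions into $X$, and $i_C: c \hookrightarrow C$ the closed inclusion into $C$. Naturality of pullback gives $(p^{\circ})^*(i^*(w)) = j^*(\tilde w)$, where $\tilde w := p^*(w) \in K_1(X)$. The localization fibre sequence
$$
\cG(X_c) \lra \cG(X) \lra \cG(X \sm X_c)
$$
is a sequence of module spectra over $\cK(X)$ --- with $\cK(X)$ acting on the outer terms through $i_X^*$ and $j^*$ respectively --- so its boundary satisfies the projection formula
$$
\partial\bigl(\alpha \cap j^*(\tilde w)\bigr) = \partial(\alpha) \cap i_X^*(\tilde w) \in G_i(X_c, \Z/\ell).
$$

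Write $p_c: X_c \to c$ for the restriction of $p$. A further appeal to naturality gives $i_X^*(\tilde w) = p_c^*(\bar w)$, where $\bar w := i_C^*(w) \in K_1(V/\fn) = \kappa(c)^\times$. Since $\kappa(c)$ is algebraically closed and $\chr\kappa(c) \nmid \ell$, every element of $\kappa(c)^\times$ is an $\ell$-th power, so $K_1(\kappa(c))/\ell = 0$; combined with $K_0(\kappa(c))[\ell] = 0$, the universal coefficient sequence gives $K_1(\kappa(c), \Z/\ell) = 0$. Thus $p_c^*(\bar w)$ vanishes in $K_1(X_c, \Z/\ell)$, and via the universal-coefficient injection $K_1(X_c)/\ell \hookrightarrow K_1(X_c, \Z/\ell)$ this forces $p_c^*(\bar w) \in \ell K_1(X_c)$. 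Since $G_i(X_c, \Z/\ell)$ is annihilated by $\ell$, we conclude $p_c^*(\bar w) \cap \partial(\alpha) = 0$.

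The main obstacle is formulating and applying the projection formula for the boundary, i.e., exhibiting the localization fibre sequence as one of $\cK(X)$-module spectra. This is standard in modern treatments of algebraic $K$-theory but requires some spectrum-level setup; once granted, the remainder of the argument is routine naturality together with the elementary vanishing $K_1(\kappa(c), \Z/\ell) = 0$ for $\kappa(c)$ algebraically closed of characteristic prime to $\ell$.
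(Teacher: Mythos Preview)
Your proof is correct and follows essentially the same route as the paper's: write $z - z' = i^*(w)$, apply the projection formula $\partial(\alpha \cap i^*(w)) = \partial(\alpha) \cap j^*(w)$ (which the paper obtains by citing \cite[1.1]{Suslin} rather than invoking the $\cK(X)$-module structure of the localization fibre sequence), and then use that $j^*(w) \in \kappa(c)^\times$ is $\ell$-divisible to kill the cap product with the $\ell$-torsion group $G_i(X_c,\Z/\ell)$. Your detour through $K_1(\kappa(c),\Z/\ell)=0$ and the universal-coefficient injection is a slightly more elaborate way of saying exactly that $\bar w$ is $\ell$-divisible, but the content is identical.
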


\begin{proof} Our proof is basically that of Suslin's, with some minor modifications.

Suppose $z'$ is another element of $K_1(V\adi{t})$ with $\partial(z') = 1$, and let 
$\sigma': G_i(X \setminus X_c, \Z/\ell) \to 
G_i(X_c, \Z/\ell)$ be the map $\sigma'(\alpha) = \partial(\a \cap z')$.
The difference $\sigma - \sigma'$ sends $\alpha
  \in G_i(U\setminus U_c, \Z/\ell)$ to $\partial(\alpha \cap (z - z'))$. Since $\partial(z-z') = 
0$, we have $z - z' = i^*(w)$ for some $w \in V$.
Using \cite[1.1]{Suslin}, we have 
$$
\partial(\alpha \cap i^*(w)) = \partial(\alpha) \cap j^*(w) \in G_i(U_c, \Z/\ell)
$$
where $j^*: K_1(V) \to K_1(V/\fn)$ is the map induced by modding out by $\fn$. Since we 
assume $V/\fn$ is algebraically closed and $\chr(V/\fn) \nmid \ell$, 
$j^*(w)$ is $\ell$-divisible, whence $\partial(\alpha) \cap j^*(w)  = 0$ since $G_i(U_c, 
\Z/\ell)$ is $\ell$-torsion. 
\end{proof}

\begin{rem} In fact, the previous result remains true (and the given proof remains valid) if the 
field $V/\fn$ is merely assumed to be closed under taking $\ell$-th roots.
\end{rem}

The specialization map is closely related to the map $\ttheta$. In detail, 
suppose $(V, \fn)$ is a pointed curve with  $\fn = (t)$ and $V \to Q$ is a map of rings 
sending $t$ to $f$. Let $\fm$ be a maximal ideal of $Q$ with $f \in \fm$
such that $\fm \cap V = \fn$. Set $X 
= \Spec(Q) \sm
  \fm$. Then $X \setminus X_c = \Spec(Q\adi{f})$, $X_c = \Spec(Q/f) \sm \fm$,  and  the 
specialization map $\sigma: G_i(Q\adi{f}, \Z/\ell) \to G_i(\Spec(Q/f) \sm \fm,
  \Z/\ell)$ (defined by setting $z = [t]$) is given by
$$
\sigma(\alpha) = \partial(\alpha \cap [f]).
$$
We have proven:

\begin{prop} \label{prop815}
Suppose $Q$ is a Noetherian ring, $\fm$ is a maximal ideal, $f \in \fm$ is non-zero-divisor,  and 
$\ell$ is a positive integer that is not divisible by $\chr(Q/\fm)$, $2$ or $3$.
Set $R = Q/f$. If there
exists a pointed curve $(V, \fn)$ and a ring map $V \to Q$ that sends some generator $t$ of 
$\fn$ to $f$, then for all integers $i,j$ the $\ttheta$ pairing 
fits into a commutative square
\begin{equation} \label{E815}
\xymatrix{
G_i(Q\adi{f}, \Z/\ell) \times K_j(\Spec(R) \sm \fm, \Z/\ell) \ar[d]^{\sigma \times \id} 
\ar[rr]^{\phantom{XX} \ttheta} && G_{i+j-1}(R/\fm, \ell) \\
G_i(\Spec(R) \sm \fm, \Z/ \ell) \times K_j(\Spec(R) \sm \fm, \Z/\ell) \ar[rr]_{\phantom{XXXXX} 
(- \cap -)} && G_{i+j}(\Spec(R) \sm \fm, \Z/\ell) \ar[u]^{\partial'} \\
}
\end{equation}
\end{prop}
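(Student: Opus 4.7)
The proof is essentially an observation unpacking the definitions, and indeed the author signals this by writing ``We have proven:'' just before the statement. Here is how I would lay it out.

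First, I would record the consequence of the hypothesis: a ring map $V \to Q$ sending $t$ to $f$ induces a morphism of affine schemes $\Spec(Q\adi{f}) \to \Spec(V\adi{t})$ under which $[t] \in K_1(V\adi{t})$ pulls back to $[f] \in K_1(Q\adi{f})$. Since $\partial([t]) = [V/\fn]$ is a generator of $K_0(V/\fn) \cong \Z$, the element $z = [t]$ is a permissible choice in the definition of the specialization map. By naturality of the cap product for the $K_*$-module structure on $G_*$ along this morphism, for any $\alpha \in G_i(Q\adi{f}, \Z/\ell)$ we obtain
$$
\sigma(\alpha) \;=\; \partial\bigl(\alpha \cap [f]\bigr) \;\in\; G_i(\Spec(R) \sm \fm, \Z/\ell),
$$
where $\partial$ is the boundary in the localization sequence in $G$-theory for the closed subscheme $X_c = \Spec(R) \sm \fm$ of $X = \Spec(Q) \sm \fm$, with open complement $X \setminus X_c = \Spec(Q\adi{f})$.

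Second, I would invoke Definition \ref{tthetadef}, which gives
$$
\ttheta(\alpha, \gamma) \;=\; \partial'\bigl(\partial(\alpha \cap [f]) \cap \gamma\bigr),
$$
where the boundary map $\partial$ is the same boundary used in the definition of $\sigma$, since the closed/open pair of schemes is literally the same. Substituting the formula for $\sigma(\alpha)$ yields $\ttheta(\alpha, \gamma) = \partial'\bigl(\sigma(\alpha) \cap \gamma\bigr)$, which is exactly the commutativity of the square.

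There is no substantial obstacle. The only care needed is to confirm that the two occurrences of $\partial$ (one in the definition of $\sigma$, one in the definition of $\ttheta$) refer to the same boundary map, which is immediate from the identifications $X_c = \Spec(R) \sm \fm$ and $X \setminus X_c = \Spec(Q\adi{f})$, and to check the cap product naturality along $V\adi{t} \to Q\adi{f}$, which is standard for the $K_*$-module structure on $G_*$.
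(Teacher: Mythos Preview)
Your proposal is correct and matches the paper's approach exactly: the paper proves this proposition in the paragraph immediately preceding it, observing that with $X = \Spec(Q) \sm \fm$ one has $X \setminus X_c = \Spec(Q\adi{f})$ and $X_c = \Spec(R) \sm \fm$, so that choosing $z = [t]$ gives $\sigma(\alpha) = \partial(\alpha \cap [f])$, which combined with the definition of $\ttheta$ yields the commutativity. Your additional remark about naturality of cap product along $V\adi{t} \to Q\adi{f}$ makes explicit what the paper leaves implicit in the phrase ``defined by setting $z = [t]$'', but the argument is the same.
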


\begin{ex}
If $Q$ contains a field $k$, then we may take $(V, \fn) = (k[t], (t))$ and define $V \to Q$ to be 
the unique $k$-algebra map sending $t$ to $f$.
\end{ex}

We will need to understand the behavior of the specialization map as we allow the base 
$(C,c)$ to vary in a suitably nice manner.

\begin{defn}
A {\em morphism of pointed curves}, say from $(C',c')$ to $(C,c)$, is a flat morphism of 
schemes $\phi: C' \to C$ such that $\phi^{-1}(c) = \{c'\}$ and the induced
map on residue fields is  an isomorphism: $\kappa(c) \xra{\cong} \kappa(c')$. Equivalently,  if 
$(C,c)$ corresponds to $(V,\fn)$ and $(C',c')$ to $(V', \fn')$, 
then a morphism is a flat ring map $g: V \to V'$
such that $\fn V'$ is $\fn'$-primary and the induced map $V/\fn \xra{\cong} V'/\fn'$ is an 
isomorphism. 
\end{defn}

Since $g$ is necessarily injective, we often just write a morphism as if it were a ring 
extension: $V \subseteq V'$.
If $\fn = (t)$ and $\fn = (t')$, then since $V$ and $V'$ are Dedekind domains, the condition 
that $\fn V'$ is $\fn'$-primary
is equivalent to the existence of an equation $t =u' (t')^n$ for some integer $n \geq 1$ and unit $u' \in
(V')^\times$.

\begin{defn} 
A morphism of pointed curves is {\em unramified} if $C' \times_C \Spec (\kappa(c)) \cong 
\kappa(c')$ (via the canonical map), or, equivalently,  if $\fn V =
\fn'$. If $\fn = (t)$ and $\fn' = (t')$, being unramified is equivalent to $t = u't'$ for some $u' 
\in (V')^\times$. 

A morphism of pointed curves is {\em finite} if the underlying map of schemes $\phi: C' \to 
C$ is finite, or, equivalently, $V'$ is a finitely generated
$V$-module. Since $V$ and $V'$ are Dedekind domains, $V \subseteq V'$ is finite if and only 
if the induced map on fields of fractions $E \subseteq E'$ is finite
and $V'$ is the integral closure of $V$ in $E'$.
\end{defn}

If $\phi: (C',c') \to (C,c)$ is a morphism of pointed curves and $p: X \to C$ is a morphism of 
Noetherian schemes, we will write $p': X' \to C'$ for the
pull-back of $p$ along $\phi$. Abusing notation, we write $\phi: X' \to X$  for the induced 
map, and also use $\phi$ for the induced maps on fibers.
Observe that our hypotheses ensure that the induced map on fibers over the marked points is 
an isomorphism: $\phi: X'_{c'} \xra{\cong} X_c$.
Using also that $\phi^{-1}(c) = \{c'\}$, we have that both squares in 
$$
\xymatrix{
X' \sm X'_{c'} \ar[r] \ar[d]^{\phi} & X' \ar[d]^{\phi} & X'_{c'} \ar[l] \ar[d]_{\phi}^\cong \\
X \sm X_{c} \ar[r] & X & X_c \ar[l]
}
$$
are Cartesian.

\begin{lem}  \label{lem84a}
If $\phi: (C',c') \to (C,c)$ is a morphism pointed curves that is either finite or unramified, $p: X \to C$ is any 
morphism of Noetherian schemes, and $\ell$ is any integer, the diagram
$$
\xymatrix{
G_i(X' \sm X'_{c'}, \Z/\ell) \ar[rr]^{\partial'} \ar[d]_{\phi_*} && G_{i-1}(X'_{c'}, \Z/\ell) 
\ar[d]_{\phi_*}^\cong\\
G_i(X \sm X_{c}, \Z/\ell) \ar[rr]^{\partial} && G_{i-1}(X_{c}, \Z/\ell)  \\
}
$$
commutes.
\end{lem}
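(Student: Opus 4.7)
The plan is to treat the two hypotheses on $\phi$ (finite or unramified) separately, in each case recognizing the commutative square as a piece of a morphism of Quillen localization fibration sequences of $G$-theory spectra. Both cases share a common preliminary observation: by the right Cartesian square displayed just before the lemma, together with $\phi^{-1}(c) = \{c'\}$ and $\kappa(c) \xra{\cong} \kappa(c')$, the induced map on closed fibers $\phi: X'_{c'} \to X_c$ is an isomorphism. This justifies the $\cong$ decoration on the right vertical arrow.

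First I would handle the finite case. Finite morphisms are stable under base change, so $\phi: X' \to X$ is finite, hence proper, and proper pushforward of coherent sheaves is an exact functor that restricts to exact functors on the Serre subcategories of sheaves supported on $X'_{c'}$ and $X_c$ respectively. By Quillen's localization theorem, this yields a commuting diagram of fibration sequences of $G$-theory spectra
$$
\xymatrix{
\cG(X'_{c'}) \ar[r] \ar[d]_{\phi_*} & \cG(X') \ar[r] \ar[d]_{\phi_*} & \cG(X' \sm X'_{c'}) \ar[d]^{\phi_*} \\
\cG(X_c) \ar[r] & \cG(X) \ar[r] & \cG(X \sm X_c).
}
$$
Smashing with the mod-$\ell$ Moore spectrum preserves this commutative diagram of fibration sequences, and the square of the lemma is precisely the boundary square of the induced morphism of long exact sequences.

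For the unramified case, $\fn V' = \fn'$, so a generator $t$ of $\fn$ is (up to a unit of $V'$) also a generator of $\fn'$, $X'_{c'} = X_c$ under the canonical identification, and $\phi: X' \to X$ is flat. Here $\phi$ need not be proper, so the left vertical $\phi_*$ is not a literal proper pushforward; I would interpret it via the closed-fiber isomorphism together with the Quillen localization triangle and then reduce the commutativity to the compatibility of the boundary map with cap product against classes pulled back from $V$. Concretely, the desired square follows from the identity $\partial(\phi^*(\alpha) \cap [t]) = \phi^*(\partial(\alpha \cap [t]))$, which is a direct consequence of Suslin's formula \cite[1.1]{Suslin} applied to the flat map $\phi$.

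The main obstacle is precisely the unramified case: pinning down the correct meaning of $\phi_*$ on the $G$-theory of the open complement when $\phi$ is not proper, and verifying that this interpretation is consistent with the one intended by the lemma. Once this definitional point is resolved, the commutativity is formal from naturality of the localization sequence and the projection formula; the finite case, by contrast, is essentially immediate from standard properties of proper pushforward in Quillen $K$-theory.
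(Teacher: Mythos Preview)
Your treatment of the finite case is correct and is exactly the paper's argument: the paper's proof is the single assertion that
$$
\xymatrix{
\cG(X'_{c'}) \ar[r] \ar[d]^{\phi_*} & \cG(X') \ar[r] \ar[d]^{\phi_*} & \cG(X' \sm X'_{c'})
\ar[d]^{\phi_*} \\
\cG(X_{c}) \ar[r]         &  \cG(X) \ar[r]         & \cG(X \sm X_{c})
}
$$
is homotopy commutative, from which the square of the lemma is the boundary square of the induced map of long exact sequences.

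Your unramified case, however, does not prove the stated lemma. You have correctly observed that when $\phi$ is unramified but not finite (for instance $V \to V^\hen$), the pushforward $\phi_*$ on $\cG(X')$ and on $\cG(X' \sm X'_{c'})$ is not available as a proper pushforward of coherent sheaves, so the spectral diagram above is not even well-posed. But your proposed remedy abandons $\phi_*$ altogether and instead argues the identity $\partial(\phi^*(\alpha) \cap [t]) = \phi^*(\partial(\alpha \cap [t]))$. That identity concerns the \emph{specialization map} $\sigma$ and the \emph{pullback} $\phi^*$, not the boundary map $\partial$ alone commuting with $\phi_*$; it is the content of the unramified case of Lemma~\ref{lem87}, not of the present lemma. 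So as written, your unramified paragraph establishes a different (and later) statement, and the square of Lemma~\ref{lem84a} with $\phi_*$ on the left remains unproved---indeed, undefined---in that case.

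For context: the paper's one-line proof does not address this subtlety either, and in fact the unramified clause of Lemma~\ref{lem84a} is never invoked elsewhere in the paper. The only applications of Lemma~\ref{lem84a} occur inside the \emph{finite} branch of the proof of Lemma~\ref{lem87}; the unramified branch of Lemma~\ref{lem87} is handled directly via $\phi^*$, exactly along the lines you sketched. So your instinct is sound, but it belongs to the next lemma, not this one.
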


\begin{proof} This follows from the fact that
$$
\xymatrix{
\cG(X'_{c'}) \ar[r] \ar[d]^{\phi_*} & \cG(X') \ar[r] \ar[d]^{\phi_*} & \cG(X' \sm X'_{c'}) 
\ar[d]^{\phi_*} \\
\cG(X_{c}) \ar[r]         &  \cG(X) \ar[r]         & \cG(X \sm X_{c}) 
}
$$
is a homotopy commutative of spectra. 
\end{proof}

\begin{lem} \label{lem87}
If $\phi: (C',c') \to (C,c)$ is either a finite or an unramified morphism of pointed curves, $p: X 
\to C$ is any morphism of Noetherian schemes and $\ell$ is
a positive integer not divisible by $\chr(\kappa(c))$, the diagram
$$
\xymatrix{
G_i(X \sm X_{c}, \Z/\ell) \ar[rr]^{\sigma} \ar[d]_{\phi^*} && G_{i}(X_{c}, \Z/\ell) 
\ar[d]_{\phi^*}^\cong\\
G_i(X' \sm X'_{c'}, \Z/\ell) \ar[rr]^{\sigma'} && G_{i}(X'_{c'}, \Z/\ell)  \\
}
$$
commutes, where $\sigma = \sigma_{C,c,p}$ and $\sigma' = \sigma_{C',c',p'}$ are the 
specialization maps.
\end{lem}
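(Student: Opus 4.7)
The strategy is to exploit the independence of $\sigma'$ from the choice of $z' \in K_1(V'\adi{t'})$ (the preceding lemma) by choosing a $z'$ that is manifestly compatible with $\phi^*(z)$, so that flat base change does the rest.

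\emph{Unramified case.} Take $z = [t]$, where $t$ generates $\fn$. Since $\fn V' = \fn'$, the image $\phi(t)$ is of the form $u' t'$ for some unit $u' \in (V')^\times$ and some generator $t'$ of $\fn'$. Consequently $\phi^*([t]) = [u'] + [t']$ in $K_1(V'\adi{t'})$. The class $[u']$ lifts to $K_1(V')$, so $\partial'([u']) = 0$; this gives $\partial'(\phi^*([t])) = 1$, and by the independence lemma we may compute $\sigma'$ using $z' = \phi^*([t])$. In the unramified case the scheme-theoretic fiber $\phi^{-1}(X_c)$ coincides with $X'_{c'}$, so flat pullback along $\phi$ commutes with both cap product and the boundary map in the localization sequence. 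Thus
\begin{align*}
\sigma'(\phi^*(\alpha))
&= \partial'(\phi^*(\alpha) \cap \phi^*([t]))
 = \partial'(\phi^*(\alpha \cap [t])) \\
&= \phi^*(\partial(\alpha \cap [t]))
 = \phi^*(\sigma(\alpha)).
\end{align*}

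\emph{Finite case.} Here one instead uses pushforward. Since $\phi: C' \to C$ is finite and flat, both $\phi^*$ and $\phi_*$ are available; Lemma~\ref{lem84a} gives $\phi_* \partial' = \partial \phi_*$, and flatness of $\phi$ yields the projection formula $\phi_*(\phi^*(\alpha) \cap \beta) = \alpha \cap \phi_*(\beta)$. The key computation is
$$
\phi_*([t']) = [t] \in K_1(V\adi{t}, \Z/\ell).
$$
Indeed, regarding $V'$ as a free $V$-module of rank $n$ (the ramification index, which equals the generic degree since the residue fields agree), the pushforward $\phi_*([t'])$ is the class of the determinant of multiplication by $t'$; this determinant has $\fn$-adic valuation $1$, so it equals $t\cdot u$ for some unit $u \in V^\times$. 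Since $k$ is algebraically closed with $\chr(k)$ coprime to $\ell$, this unit is an $\ell$-th power after reducing to the Henselian case (an analog of the factorization-through-Henselization argument behind Proposition~\ref{prop815b}, combined with Hensel's lemma applied to $X^\ell - u$), and so $[u]$ dies in $K_1(V\adi{t}, \Z/\ell)$. Combining these ingredients,
$$
\phi_*(\sigma'(\phi^*(\alpha))) = \partial\!\bigl(\phi_*(\phi^*(\alpha) \cap [t'])\bigr) = \partial(\alpha \cap [t]) = \sigma(\alpha).
$$
Because $\phi$ restricts to an isomorphism on reduced fibers $X'_{c'} \to X_c$, its pushforward $\phi_*$ and pullback $\phi^*$ are mutually inverse on $G_i(X'_{c'}, \Z/\ell) \xra{\cong} G_i(X_c, \Z/\ell)$; applying $\phi^*$ to the displayed equation yields the desired identity $\phi^*(\sigma(\alpha)) = \sigma'(\phi^*(\alpha))$.

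The main technical obstacle is the identity $\phi_*([t']) = [t]$ modulo $\ell$ in the finite (possibly ramified) case without a priori Henselian hypotheses on $V$; the natural route is to first prove the lemma after Henselizing $V$ and $V'$ compatibly, then descend using the naturality of both sides under flat pullback along $V \to V^\hen_\fn$.
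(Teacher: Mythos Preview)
Your unramified case is essentially the same as the paper's (the paper simply takes $t' := \phi(t)$ as the generator of $\fn'$, avoiding the extraneous unit $u'$, but your version is equally valid).

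In the finite case, however, you are working much harder than necessary and introduce a genuine gap. The claim $\phi_*([t']) = [t]$ in $K_1(V\adi{t}, \Z/\ell)$ is \emph{not} true in general: the determinant of multiplication by $t'$ is $t \cdot u$ for some $u \in V^\times$, but $u$ need not be an $\ell$-th power when $V$ is not Henselian (e.g., take $V = k[t, (t-1)^{-1}]$, where $t-1 \in V^\times$ is not an $\ell$-th power). Your proposed remedy --- Henselize, then descend --- is not carried out, and the descent step is not obviously compatible with the finite morphism $\phi$ (one would need to check that $V' \otimes_V V^\hen$ is again a pointed curve, etc.).

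The paper's argument sidesteps all of this with a trick you nearly have in hand but apply on the wrong side. You invoke the independence lemma for $\sigma'$ in the unramified case; in the finite case, invoke it for $\sigma$ instead. Concretely: pick any admissible $z' \in K_1(V'\adi{t'})$ and \emph{define} $z := \phi_*(z')$. By Lemma~\ref{lem84a} (applied to the base curves), $\partial(z) = \phi_*(\partial' z') = \phi_*(1) = 1$ in $K_0(V/\fn)$, so $z$ is an admissible choice for $\sigma$. Now the projection formula and Lemma~\ref{lem84a} give
\[
\phi_*\bigl(\sigma'(\phi^*\alpha)\bigr) = \phi_*\partial'(\phi^*\alpha \cap z') = \partial\,\phi_*(\phi^*\alpha \cap z') = \partial(\alpha \cap \phi_*z') = \partial(\alpha \cap z) = \sigma(\alpha),
\]
with no need to identify $\phi_*(z')$ explicitly or to Henselize.
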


\begin{proof}
Assume $\phi$ is finite. 
Since $\phi_*: G_i(X_c, \Z/\ell) \xra{\cong} G_i(X'_{c'}, \Z/\ell)$ is the inverse of $\phi^*$, it 
suffices to prove
$$
\xymatrix{
G_i(X \sm X_{c}, \Z/\ell) \ar[rr]^{\sigma} \ar[d]_{\phi^*} && G_{i}(X_{c}, \Z/\ell) \\
G_i(X' \sm X'_{c'}, \Z/\ell) \ar[rr]^{\sigma'} && G_{i-1}(X'_{c'}, \Z/\ell) \ar[u]_{\phi_*}^\cong \\
}
$$
commutes.
Choose any $z' \in K_1(V'\adi{t'})$ as in the definition of the specialization map $\sigma'$. A 
special case of Lemma \ref{lem84a} gives that $\partial
\phi_*(z') = 1$,
and hence we may choose $z := \phi_*(z')$ in the definition of $\sigma$. Using Lemma 
\ref{lem84a} again gives
$$
\phi_* \sigma'(\phi^* \alpha) =
\phi_* \partial' (\phi^*(\alpha) \cap z') =
\partial \phi_*(\phi^*(\a) \cap  z').
$$
By the Projection Formula, $\phi_*(\phi^*(\a) \cap  z') = \a \cap \phi_*(z') = \a \cap z$, since 
$z = \phi_*(z')$. The result follows.

Now assume  $\phi$ is unramified. Then
$$
\xymatrix{
X'_{c'} \ar[r] \ar[d]^\cong & X' \ar[d] \\
X_c \ar[r] & X 
}
$$
is a Cartesian square, and it follows that
$$
\xymatrix{
\cG(X_c) \ar[r] \ar[d]^{\phi^*} & \cG(X) \ar[r] \ar[d]^{\phi^*} & \cG(X \setminus X_c) 
\ar[d]^{\phi^*} \\
\cG(X'_{c'}) \ar[r] & \cG(X') \ar[r] & \cG(X' \setminus X'_{c'}) 
}
$$
is homotopy commutative diagram. The boundary maps $\partial, \partial'$ in the associated 
long exact sequences of homotopy
groups (with any coefficients) thus commute with $\phi^*$.

Let $t \in \fn$ be any generator. Since $\phi$ is unramified, $t' := \phi(t)$ generates $\fn'$.
Using $z = [t]$ and $z' = [t']$ for the definition of $\sigma$ and $\sigma'$, 
we have for any $\a \in G_i(X \setminus X_c, \Z/\ell)$
$$
\sigma(\a) = \phi^* \partial(\a \cap [t]) = \partial'(\phi^*(\a \cap [t])) = \partial' (\phi^*(a) 
\cap [t']) = \sigma'(\phi^*(\a)),
$$
since $\phi^*$ commutes with $\cap$ product and $\phi^*([t]) = [t']$.
\end{proof}

The notion of the Henselization of a ring at a maximal ideal will
  be important for the rest of this paper. Let us recall  its main properties. Given a Noetherian ring $Q$ and a maximal ideal $\fm$ of it, 
we write $Q^\hen_\fm$ for the Henselization of $Q$ at $\fm$.
We have:
\begin{itemize}
\item $Q^\hen_\fm$ is a Noetherian local ring, with maximal ideal $\fm^\hen$, and the pair 
$(Q^\hen, \fm^\hen)$ satisfies Hensel's Lemma: Given a monic
  polynomial $p(x) \in Q^\hen_\fm[x]$, if its image $\o{p}(x) \in Q^\hen_\fm/\fm^\hen[t]$ 
has a simple root $a$, then $p(x)$ has a root $\a \in Q^\hen_\fm$ whose
  image in $Q^\hen_\fm/\fm^\hen$ is $a$.

\item There are flat ring maps $Q \to Q_\fm \to Q^\hen_\fm$ and $Q_\fm \to Q^\hen_\fm$ 
is faithfully flat. 

\item We have $\fm Q^\hen_\fm = \fm^\hen$, or, in other words, the fiber of 
$\Spec(Q^\hen_\fm) \to \Spec(Q)$ over $\fm$ is $\Spec(\kappa(\fm^\hen)) = 
\Spec(Q^\hen_\fm/ \fm^\hen)$.

\item $Q_\fm$ is regular if and only if $Q^\hen_\fm$ is regular.

\item \cite[18.6.9]{EGA4d} The fibers of $\Spec(Q^\hen_\fm) \to \Spec(Q)$ are spectra of 
finite products of algebraic, separable field extensions. That is, for every $\fp
  \in \Spec(Q)$, there are a finite number of primes $\fq_1, \dots, \fq_d$ in $Q^\hen_\fm$ 
such that $\fq_i \cap Q = \fp$ and, for each $i$, the induced field map 
$\kappa(\fp) \into \kappa(\fq_i)$ is separable algebraic.
\end{itemize}

For any pointed curve $(V, \fn)$, let $V^\hen$ denote the Henselization of $V$ at 
$\fn$ and let $\fn^\hen$ denote also the maximal
ideal of $V^\hen$.  Then $V^\hen$ is also a Dedekind domain and $(V, \fn) \to (V^\hen, 
\fn^\hen)$ is a unramified morphism of
pointed curves.

In the following Theorem, given a morphism $p: X \to \Spec(V)$ of Noetherian schemes, we will 
consider $G$-theory of the geometric generic fiber
$$
X_{\o{F}} := X \times_{\Spec(V)} \Spec(\o{F}),
$$
where $F$ is the field of fractions of a Henselian ring $V$ 
and $\o{F}$ is its algebraic closure.
Note that $X_{\o{F}}$ may fail to be Noetherian and $G$-theory is ordinarily defined for just 
Noetherian schemes. (For non-Noetherian schemes, the
category of coherent sheaves may fail to be an abelian category.) 
However, since $X_L := X \times_{\Spec{V}} \Spec(L)$ is Noetherian for every finite field 
extension $L$ of $F$,
we have that $X_{\o{F}}$ is a filtered colimit of Noetherian schemes with flat transition maps. 
We thus take, as definition, the $G$-theory spectrum of
$X_{\o{F}}$ to be the filtered colimit of spectra
$$
\cG(X_{\o{F}}) := \colim_L \cG(X_L)
$$
indexed by the finite field extensions of $F$ contained in $\o{F}$. The associated $G$-groups 
are thus also given by colimits
$$
G_n(X_{\o{F}}) = \colim_L G_n(X_L).
$$

\begin{rem}
At least when $X = \Spec(A)$ is affine, 
$G_n(X_L)$ may be interpreted as the $K$-theory of an abelian category.
Indeed, more generally, if $A$ is a filtered colimit, $A = \colim_{i \in I} A_i$, of Noetherian 
rings with flat transition maps, then $G_n(A) := \varinjlim_{i \in } G_n(A_i)$ is the
$K$-theory of the category of finitely presented $A$-modules and the latter forms an abelian 
category under these assumptions. 
\end{rem}

In later sections we will be mostly interested in the case where $X_F \to \Spec(F)$ is smooth, 
so that each transition map in the colimit giving $X_{\o{F}}$ is
a flat morphism of regular rings, and we will also be interested only in the case when $X$ is 
affine. In this situation, $G$-theory and 
$K$-theory coincide, as we now explain. 

More generally, suppose $A = \colim_{i \in I} A_i$ is a filtered colimit of regular Noetherian 
rings with flat transition maps.
Even if $A$ is not Noetherian, the correct notion of the $K$-theory of $A$ is unambiguous: it is the $K$-theory of 
the exact category $\cP(A)$ of finitely generated projective $A$-modules. 
Moreover, we have
$$
K_n(A) = \varinjlim_{i \in I} K_n(A_i)
$$
for all $n$. Since we assume each $A_i$ is regular Noetherian, each natural map $K_n(A_i) \to 
G_n(A_i)$ is an isomorphism and thus the canonical map
$$
K_n(A) \xra{\cong} G_n(A)
$$
is an isomorphism. In particular, $K_n(X_{\o{F}}) \cong G_n(X_{\o{F}})$ provided $X_F \to 
\Spec(F)$ is smooth and  $X$ is affine.

\begin{thm}  \label{thm815}
Given a pointed curve $(V, \fn)$ and a  morphism $p: X \to \Spec(V)$
  of Noetherian schemes, let $X_{\o{F}} = X \times_{\Spec(V)} \Spec(\o{F})$, where $\ov{F}$ is 
the algebraic closure of the
field of fractions of the Henselization $V^\hen$ of $V$ at $\fn$.
For any positive integer $\ell$ such that $\chr(\kappa(c)) \nmid \ell$, 
the specialization map $\sigma: G_i(X \sm X_c, \Z/\ell) \to G_i(X_c, \Z/\ell)$ factors through 
$G_i(X_{\o{F}})$; i.e., there is a commutative triangle
$$
\xymatrix{
G_i(X \sm X_c, \Z/\ell) \ar[rd]  \ar[rr]^{\sigma_{C,c,p}} && G_i(X_c, \Z/\ell). \\
& G_i(X_{\o{F}}, \Z/\ell) \ar[ru]\\
}
$$

Moreover, if $\ell$ is also not divisible by $2$ or $3$, then the direct sum indexed  over  $i 
\geq 0$ of these morphisms are graded $K_*(V, \Z/\ell)$-module homomorphisms.
\end{thm}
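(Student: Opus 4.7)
The plan is to prove the factorization in two stages (first reducing to the Henselian case, then writing $\overline{F}$ as a filtered colimit of finite extensions), and to establish the module structure via Suslin's Leibniz rule for the boundary map.

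First I would use Lemma \ref{lem87} applied to the unramified morphism of pointed curves $(V,\fn) \to (V^{\hen}, \fn^{\hen})$ to reduce to the case where $V$ is itself a Henselian DVR with algebraically closed residue field. Under this reduction, the map $G_i(X_c, \Z/\ell) \to G_i(X_{c^{\hen}}, \Z/\ell)$ on closed fibers is an isomorphism (both fibers have the same residue field), while $X_F = X \setminus X_c$ since $V[1/t] = F$ when $V$ is a Henselian DVR. Then I would write $\overline{F} = \colim_L L$ as a filtered colimit over finite subfield extensions $L/F$ inside $\overline{F}$. For each such $L$, set $V_L$ equal to the integral closure of $V$ in $L$. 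Because $V$ is Henselian with algebraically closed residue field, $V_L$ is a local Henselian DVR with the same residue field, so $(V_L, \fn_L)$ is a pointed curve and $V \subseteq V_L$ is a finite morphism of pointed curves. Applying Lemma \ref{lem87} to each $V \to V_L$ produces compatible diagrams
\begin{equation*}
\xymatrix{
G_i(X_F, \Z/\ell) \ar[r]^{\sigma} \ar[d] & G_i(X_c, \Z/\ell) \ar@{=}[d] \\
G_i(X_L, \Z/\ell) \ar[r]^{\sigma_L} & G_i(X_c, \Z/\ell) \\
}
\end{equation*}
where the right-hand identification uses the residue-field isomorphism $\kappa(c) \cong \kappa(c_L)$. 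For a tower $L \subseteq L'$ the two triangles glue via another application of the lemma to $V_L \to V_{L'}$, so the family $\{\sigma_L\}_L$ descends to a single map out of the colimit $G_i(X_{\overline{F}}, \Z/\ell) = \colim_L G_i(X_L, \Z/\ell)$, yielding the desired factorization.

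For the $K_*(V, \Z/\ell)$-module structure, I would verify linearity of each constituent map using Suslin's formula \cite[1.1]{Suslin}. Given $u \in K_n(V, \Z/\ell)$ and $\alpha \in G_i(X \setminus X_c, \Z/\ell)$, the pullback of $u$ to $X \setminus X_c$ lifts along $X \setminus X_c \hookrightarrow X$ to $K_n(X, \Z/\ell)$, so Suslin's Leibniz rule gives $\partial(u \cap \beta) = u|_{X_c} \cap \partial(\beta)$ for any $\beta \in G_{i+1}(X \setminus X_c, \Z/\ell)$. Applying this with $\beta = \alpha \cap z$ and using associativity of cap product yields $\sigma(u \cap \alpha) = u|_{X_c} \cap \sigma(\alpha)$. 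The intermediate pullback $G_i(X \setminus X_c) \to G_i(X_{\overline{F}})$ is obviously $K_*(V)$-linear, and the colimit map $G_i(X_{\overline{F}}) \to G_i(X_c)$ inherits linearity because each $\sigma_L$ is $K_*(V_L)$-linear (hence $K_*(V)$-linear via $K_*(V) \to K_*(V_L)$) and colimits preserve module structure.

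The main obstacle is verifying compatibility of the colimit, namely that for each tower of finite extensions $L \subseteq L'$ the corresponding specialization diagrams commute after being identified via the residue-field isomorphisms, so that the $\sigma_L$ really assemble to a well-defined map out of $G_i(X_{\overline{F}}, \Z/\ell)$. This reduces to checking naturality of Lemma \ref{lem87} along a composition of finite morphisms of pointed curves, which in turn rests on the fact that when $V$ is Henselian with algebraically closed residue field all the residue-field identifications $\kappa(c) \cong \kappa(c_L) \cong \kappa(c_{L'})$ become the identity on $\kappa(c)$. The Henselian reduction step is therefore doing the essential work: without it, the integral closure $V_L$ would generally fail to be local and the sequence of pointed curves would break down.
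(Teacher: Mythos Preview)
Your proposal is correct and follows essentially the same route as the paper: reduce to the Henselian case via Lemma~\ref{lem87} for the unramified morphism $(V,\fn)\to(V^{\hen},\fn^{\hen})$, then for each finite extension $F\subseteq L$ inside $\overline{F}$ apply Lemma~\ref{lem87} to the finite morphism $V\to V_L$, check compatibility along towers $L\subseteq L'$, and pass to the colimit. Your treatment of the $K_*(V,\Z/\ell)$-linearity is more explicit than the paper's (which simply observes that every map in the argument commutes with multiplication by a fixed class in $K_*(V,\Z/\ell)$), but the underlying content is the same.
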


\begin{proof} As discussed above, the map $(V,\fn) \to (V^\hen,\fn)$ is unramified and so 
using Lemma \ref{lem87}, we may assume without loss of generality that
  $(V,\fn)$ is Henselian with field of fractions $F$. Note that the integral closure 
$\overline{V}$ of $F$ is a filtered colimit  of rings of the form $V'$
  where $V'$ is the integral closure of $V$ in some finite field extension $F \subseteq F'$ of the 
field of fractions of $V$. Using Lemma \ref{lem87} again, we
  have a commutative triangle
$$
\xymatrix{
G_i(X \sm X_{c}, \Z/\ell) \ar[rr]^{\sigma} \ar[d] && G_{i}(X_{c}, \Z/\ell) \\
G_i(X' \sm X'_{c'}, \Z/\ell) \ar[rru]_{(\phi^*)^{-1} \circ \sigma'}
}
$$
for each such $V'$. Moreover, given maps $V \into V' \into
V''$ associated to a chain of finite field extensions $F \subseteq F' \subseteq F''$, the diagram
$$
\xymatrix{
G_i(X \sm X_{c}, \Z/\ell) \ar[rr]^{\sigma} \ar[d] && G_{i}(X_{c}, \Z/\ell) \\
G_i(X' \sm X'_{c'}, \Z/\ell) \ar[rru]^{\sigma'} \ar[d]\\
G_i(X'' \sm X''_{c''}, \Z/\ell) \ar[rruu]^{\sigma''}
}
$$
commutes. Taking colimits thus gives a commutative triangle
$$
\xymatrix{
G_i(X \sm X_{c}, \Z/\ell) \ar[r]^{\sigma} \ar[d] & G_{i}(X_{c}, \Z/\ell). \\
\colim_{F \subseteq F'} G_i(X' \sm X'_{c'}, \Z/\ell) \ar[ru]
}
$$
The first result follows, since $\colim_{F \subseteq F'} (X' \sm X'_{c'}) \cong X_{\ov{F}}$  
and 
$$
G_i(X_{\o{F}}, \Z/\ell) := \colim_{F \subseteq F'} G_i(X' \sm X'_{c'}, \Z/\ell).
$$

For the final claim, note that every morphism occurring in this proof commutes with 
multiplication by any fixed element of $K_*(V, \Z/\ell)$. 
\end{proof}

\begin{cor} \label{cor87}
Suppose $Q$ is a Noetherian ring, $\fm$ is a maximal ideal, $f \in \fm$ is a non-zero-divisor, 
and $\ell$ is a 
positive integer that is not divisible by $\chr(Q/\fm)$,
$2$ or $3$. Set $R = Q/f$. If there
exists a pointed curve $(V, \fn)$ and a ring map $V \to Q$ that sends some generator $t$ of 
$\fm$ to $f$, then for all $i,j$, the $\ttheta$ pairing fits into a commutative
square:
$$
\xymatrix{
G_i(Q\adi{f}, \Z/\ell) \times K_j(\Spec(R) \sm \fm, \Z/\ell) \ar[r]^{\phantom{XXX} \ttheta}  
\ar[d] & K_{i+1-1}(R/\fm, \Z/\ell) \\
 G_i(Q^\hen_\fm \otimes_V \overline{F}, \Z/\ell) \times K_j(\Spec(R) \sm \fm, \Z/\ell) \ar[ru] 
\\
}
$$
where $Q^\hen_\fm$ is the Henselization of $Q$ at $\fm$ and 
$\o{F}$ is the algebraic closure of the field of fractions $F$ of $V^\hen$.
Moreover, the direct sum indexed by $i,j \geq 0$ of these morphisms are graded $K_*(V, 
\Z/\ell)$-module homomorphisms.

If $F \to Q \otimes_V F$ is smooth, then 
$$
G_i(Q^\hen_\fm \otimes_V \overline{F}, \Z/\ell) \cong
K_i(Q^\hen_\fm \otimes_V \overline{F}, \Z/\ell).
$$
\end{cor}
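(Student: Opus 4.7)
The proof is essentially a synthesis of the three immediately preceding results, so the plan is largely a matter of stringing them together in the right order.

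First, I would invoke Proposition \ref{prop815b} applied to the flat map $Q \to Q^\hen_\fm$, whose hypotheses ($\fm Q^\hen_\fm = \fm^\hen$ and $Q/\fm \xrightarrow{\cong} Q^\hen_\fm/\fm^\hen$) are standard properties of Henselization recalled before Theorem \ref{thm815}. This reduces the proof to the case where $Q$ has been replaced by $Q^\hen_\fm$; the composite $V \to Q \to Q^\hen_\fm$ still sends the chosen generator $t$ of $\fn$ to $f$, so the pointed-curve hypothesis is preserved.

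Next, with $Q = Q^\hen_\fm$, I would apply Proposition \ref{prop815} to the morphism $p \colon X := \Spec(Q^\hen_\fm) \setminus \fm^\hen \to \Spec(V)$. Unwinding notation, $X \setminus X_c = \Spec(Q^\hen_\fm\adi{f})$, $X_c = \Spec(R) \setminus \fm$, and the pull-back $X_{\overline{F}}$ equals $\Spec(Q^\hen_\fm \otimes_V \overline{F})$. The square \eqref{E815} then expresses $\ttheta$ as the composition $\partial' \circ (- \cap -) \circ (\sigma \times \id)$, where $\sigma$ is the specialization map for this $p$. Finally, Theorem \ref{thm815} provides a factorization $\sigma \colon G_i(Q^\hen_\fm\adi{f}, \Z/\ell) \to G_i(Q^\hen_\fm \otimes_V \overline{F}, \Z/\ell) \to G_i(\Spec(R) \setminus \fm, \Z/\ell)$, which when inserted into the previous square gives the desired triangle.

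For the $K_*(V, \Z/\ell)$-linearity, the ``moreover'' clause of Theorem \ref{thm815} handles the specialization map, while cap product and the localization boundary $\partial'$ are each $K_*$-module maps for the natural actions pulled back from $V$; composing these yields the module-homomorphism claim.

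For the final assertion that $G$- and $K$-theory coincide when $F \to Q \otimes_V F$ is smooth, I would argue as follows. The Henselization $Q^\hen_\fm$ is the filtered colimit of its étale neighborhoods over $Q_\fm$, and these transition maps are flat; base-changing by any finite extension $F'/F$ preserves this colimit structure and yields rings that are étale over localizations of a smooth $F$-algebra, hence regular Noetherian. Taking a further colimit over finite subextensions of $\overline{F}/F$ exhibits $Q^\hen_\fm \otimes_V \overline{F}$ as a filtered colimit of regular Noetherian rings with flat transition maps, which is precisely the setting in which the discussion preceding Theorem \ref{thm815} shows $K_i \xrightarrow{\cong} G_i$.

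The only real obstacle I anticipate is bookkeeping: one must verify carefully that $\fm^\hen$ lies over $\fn$, that the fiber and complement identifications above are correct, and that the module-action compatibilities match across the diagrams. None of these steps is mathematically deep; the corollary is really a repackaging of Propositions \ref{prop815}, \ref{prop815b}, and Theorem \ref{thm815}.
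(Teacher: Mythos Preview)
Your proposal is correct and follows exactly the paper's approach: reduce to $Q = Q^\hen_\fm$ via Proposition~\ref{prop815b}, then combine Proposition~\ref{prop815} with the factorization of the specialization map from Theorem~\ref{thm815}. You have simply unpacked the details (the identification of $X_{\overline{F}}$ with $\Spec(Q^\hen_\fm \otimes_V \overline{F})$, the $K_*(V,\Z/\ell)$-linearity, and the filtered-colimit argument for $G \cong K$) that the paper's two-sentence proof leaves implicit.
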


\begin{proof}
Applying Proposition \ref{prop815b} to the flat map $Q \to Q^\hen_\fm$ allows us to reduce 
to the case where $Q = Q^\hen_\fm$. The result then 
follows from Proposition \ref{prop815} and Theorem \ref{thm815}. 
\end{proof}

\begin{rem} In fact, Corollary \ref{cor87} remains valid even if $\ell = 0$ (i.e., with $\Z$-
coefficients). Since we shall not need this fact in this paper, we omit its
  proof.
\end{rem}

\section{\'Etale (Bott inverted) $K$-theory}
Recall that the vanishing result of Buchweitz and van Straten uses topological $K$-theory, not algebraic $K$-theory. In a general characteristic setting, the
best replacement for topological $K$-theory is \etale $K$-theory (with finite coefficients). 
This leads us to the goal of proving that, under suitable hypotheses,  the pairing
$$
\ttheta: G_0(Q\ad{f}, \Z/\ell) \times K_1(\Spec(R)
\setminus \fm , \Z/\ell) \to \Z/\ell
$$
factors through the analogous pairing involving 
\'etale $G$- and $K$-theory. 

This is roughly what we achieve in this section. 
But, to avoid some nagging issues in 
the foundations of \'etale $K$-theory, it proves simpler to use instead the theory obtained from
algebraic $K$-theory (and $G$-theory) with finite coefficients by inverting the so-called ``Bott element''.
Since the resulting theory is closely related to topological $K$-theory, we will write it as $\Ktop$ (and $\Gtop$); see Definition \ref{Def1127} below.
Motivation for our approach is provided by Thomason's Theorem
\cite[4.1]{Thomason} (recently extended by Rosenschon-{\O}stv{\ae}r \cite[4.3]{OR}), 
which says, roughly, that ``Bott inverted algebraic $K$-theory with
$\Z/\ell$ coefficients and
\'etale $K$-theory 
with $\Z/\ell$ coefficients coincide''.

\begin{defn} \label{Def1127}
For any scheme $X$ and prime $\ell$,  let $\cK(X, \Z/\ell)$ be the result of 
smashing the algebraic $K$-theory spectrum of $X$, $\cK(X)$,  with the mod-$\ell$
Moore space, and for any Noetherian scheme $X$, let $\cG(X, \Z/\ell)$ be the result of smashing 
$\cG(X)$ with the mod-$\ell$ Moore space.

Let $\cKU$ denote the spectrum representing (two periodic) topological 
$K$-theory and let $L_\cKU \cE$ denote the Bousfield localization
of a spectrum $\cE$ at $\cKU$.

Finally, for any scheme $X$ and prime $\ell$, define
$$
\Ktop_n(X, \Z/\ell) := \pi_n L_\cKU \cK(X, \Z/\ell), n \in \Z,
$$
and for any Noetherian scheme $X$, define
$$
\Gtop_n(X, \Z/\ell) := \pi_n L_\cKU \cG(X, \Z/\ell), n \in \Z.
$$
\end{defn}

Let us bring these definitions down to Earth a bit. Assume for simplicity that $\ell \geq 5$ and 
define $\mu_\ell = e^{2 \pi i/\ell}$,  a primitive $\ell$-th root of unity in $\C^\times$, and
consider the ring $\Z[\mu_\ell]$.
Recall (e.g., from \cite[Appendix A]{Thomason}) that the {\em Bott element} is
a certain canonical element $\beta \in K_2(\Z[\mu_\ell], \Z/\ell)$, which maps to 
$[\mu_\ell] \in K_1(\Z[\mu_\ell])$ under the boundary map. 
If $X$ is a scheme over 
$\Spec(\Z[\mu_\ell])$, then we obtain by pullback a Bott element
$\beta \in K_2(X, \Z/\ell)$.

For example, if $X = \Spec(k)$ is an algebraically closed field with $\chr(k) \ne \ell$, then 
there is a map $\Z[\mu_\ell] \to k$, specified by a choice of a primitive $\ell$-th root of
unity for $k$. Moreover,
the boundary map
of the long exact coefficients sequence for the $K$-theory of $k$ determines (using \cite{SuslinLocalFields, Suslin})
an  isomorphism
$$
K_2(k, \Z/\ell) \xra{\cong} \mu_\ell(k),
$$
and a Bott element $\beta$ for $k$ maps to the chosen primitive $\ell$-th root of unity in 
$k$ under this map.
More generally, if $V$ is a Henselian dvr with algebraically closed residue field $k$ such that 
$\chr(k) \ne \ell$, then we have 
$$
K_i(V, \Z/\ell) \cong K_i(k, \Z/\ell) 
\cong
\begin{cases}
\Z/\ell, & \text{if $i$ is even and} \\
0, & \text{otherwise,}
\end{cases}
$$
by \cite{SuslinLocalFields, Suslin}, 
and a Bott element for $V$ may also be specified by choosing a primitive $\ell$-th root of 
unity in $k$.

Given a Bott element $\beta \in K_2(\Z[\mu_\ell], \Z/\ell)$ for some prime $\ell \geq 5$, the 
element $\beta$ acts on the $K$- and $G$-groups of all schemes
over $\Z[\mu_\ell]$, and this action is compatible with the localization long exact sequences 
for such schemes and it commutes with cup and cap products. Thus,
formally inverting the action of $\beta$ preserves all the structure needed in this paper.

\begin{lem} Let $\ell \geq 5$ be a prime, let $X$ be a scheme over $\Z[\mu_\ell]$, and let 
$\beta \in K_2(X, \Z/\ell)$ be the associated Bott element for $X$. 
There is a  natural isomorphism
$$
\Ktop_*(X, \Z/\ell) \cong K_*(X, \Z/\ell)\ai{\beta},
$$
where $K_*(X, \Z/\ell)\ai{\beta}$ denotes the graded ring obtained by inverting the 
homogeneous central element $\beta$ in the graded ring $K_*(X, \Z/\ell)$.

If $X$ is Noetherian, there is a natural isomorphism
$$
\Gtop_*(X, \Z/\ell) \cong G_*(X, \Z/\ell)\ai{\beta},
$$
where $G_*(X, \Z/\ell)\ai{\beta}$ is the localization of the graded $K_*(X, \Z/\ell)$-module
$G_*(X, \Z/\ell)$ by $\beta$.
\end{lem}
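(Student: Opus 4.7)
The plan is to identify the Bousfield localization $L_\cKU \cK(X, \Z/\ell)$ with the telescope obtained by inverting the Bott element, using the standard circle of ideas due to Snaith and Thomason that relates mod-$\ell$ topological $K$-theory to Bott-inverted mod-$\ell$ sphere spectra.

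First I would set up the telescope: form the homotopy colimit
$$
\cK(X, \Z/\ell)[\beta^{-1}] := \hocolim\Bigl( \cK(X, \Z/\ell) \xra{\cdot \beta} \Sigma^{-2}\cK(X, \Z/\ell) \xra{\cdot \beta} \Sigma^{-4}\cK(X, \Z/\ell) \to \cdots\Bigr).
$$
Since homotopy groups commute with filtered homotopy colimits, its homotopy groups are $K_*(X, \Z/\ell)[\beta^{-1}]$ in the sense of the graded localization. This construction makes sense because $\beta$ lies in the central subring $K_*(\Z[\mu_\ell], \Z/\ell)$ acting on $\cK(X, \Z/\ell)$ via pullback along $X \to \Spec\Z[\mu_\ell]$. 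The analogous telescope exists for $\cG(X, \Z/\ell)$ by its module structure over $\cK(X, \Z/\ell)$.

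The heart of the argument consists of two claims: (a) the telescope $\cK(X, \Z/\ell)[\beta^{-1}]$ is already $\cKU$-local, and (b) the natural map $\cK(X, \Z/\ell) \to \cK(X, \Z/\ell)[\beta^{-1}]$ is a $\cKU$-equivalence. For (a), the key input is Snaith's theorem, which identifies the mod-$\ell$ topological $K$-theory spectrum with a Bott-inverted mod-$\ell$ spectrum; consequently any $\cK(\Z[\mu_\ell], \Z/\ell)$-module spectrum on which $\beta$ acts invertibly is a module over a $\cKU/\ell$-type spectrum, and such modules are $\cKU$-local. For (b), the fiber $\cF$ of the telescope map is itself a $\cK(\Z[\mu_\ell], \Z/\ell)$-module on which $\beta$ acts locally nilpotently; $\cKU$-homology of such a module vanishes because smashing with $\cKU$ inverts $\beta$. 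Together these give $L_\cKU \cK(X, \Z/\ell) \simeq \cK(X, \Z/\ell)[\beta^{-1}]$ by the universal property of Bousfield localization, and taking $\pi_n$ yields the stated isomorphism.

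The $G$-theory case goes through by the same mechanism: $\cG(X, \Z/\ell)$ is a module spectrum over $\cK(X, \Z/\ell)$, so both locality of the telescope and acyclicity of the fiber follow from the module structure, giving the analogous identification $\Gtop_*(X, \Z/\ell) \cong G_*(X, \Z/\ell)[\beta^{-1}]$. I expect the main technical obstacle to be pinning down the precise citation and form of Snaith-type theorem that guarantees $\cKU$-locality of a Bott-inverted mod-$\ell$ spectrum in the generality needed here (i.e., for $K$-theory spectra of arbitrary $\Z[\mu_\ell]$-schemes rather than just the sphere). In practice, this is usually bypassed by citing the formal consequences worked out by Thomason \cite[A.14]{Thomason} and subsequently refined in \cite{OR}, which establish exactly the identification $L_\cKU \cK(-,\Z/\ell) \simeq \cK(-,\Z/\ell)[\beta^{-1}]$.
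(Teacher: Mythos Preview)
The paper's own proof is a one-line citation: ``See \cite[A.14]{Thomason}.'' Your proposal is considerably more detailed, sketching the actual mechanism behind Thomason's result (telescope construction, $\cKU$-locality via Snaith-type input, $\cKU$-acyclicity of the fiber), but you correctly identify at the end that the result is precisely what is recorded in \cite[A.14]{Thomason}. So your approach and the paper's coincide; you have simply unpacked the content of the cited reference rather than pointing to it.
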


\begin{proof} See \cite[A.14]{Thomason}.
\end{proof}

\begin{rem} Since $\beta$ has degree two, one can view
  $\Ktop_*$ and $\Gtop_*$ as $\Z/2$-graded abelian groups.
\end{rem}

If $V$ is a Henselian dvr with algebraically closed residue field $k$ 
such that $\chr(k) \ne \ell$, 
$Q$ is a $V$-algebra, $\fm$ is a maximal ideal of $Q$, and $f \in \fm$ is a non-zero-divisor,
then the family of pairings 
$$
\ttheta: G_i(Q\ad{f}, \Z/\ell) \times K_j(\Spec(Q/f) \setminus \fm,
\Z/\ell) \to
K_{i+j-1}(Q/\fm, \Z/\ell). 
$$
forms a pairing of graded 
$K_*(V, \Z/\ell)$-modules, and hence upon inverting the action of the Bott
element $\beta \in K_2(V, \Z/\ell)$,  we
obtain the pairing
$$
\ttheta^\top:
\Gtop_i(Q\ad{f}, \Z/\ell) \times \Ktop_j(\Spec(Q/f) \setminus \fm,
\Z/\ell) \to
\Ktop_{i+j-1}(Q/\fm, \Z/\ell). 
$$
If $Q/\fm$ is algebraically closed (e.g., if $Q$ is a
finite type $V$-algebra or the localization of such at a maximal ideal), then by \cite{SuslinLocalFields, Suslin} we have
$$
K_*(Q/\fm, \Z/\ell) = \Z/\ell[\beta]
$$
and hence
$$
\Ktop_*(Q/\fm, \Z/\ell)  = \Z/\ell[\beta,
\beta^{-1}].
$$
In particular, $K_0(Q/\fm, \Z/\ell) \cong \Ktop_0(Q/\fm, \Z/\ell) \cong \Z/\ell$.

\begin{prop} \label{Prop1025}
Assume $V$ is a dvr with algebraically closed residue field $k$ and field of fractions $F$, $Q$ 
is a
flat $V$-algebra, $t$ is a uniformizing parameter of $V$ that maps to an
element $f \in Q$, and $\fm$ is a maximal ideal of $Q$ containing $f$ such that $Q/\fm$ is 
algebraically closed. Set $R = Q/f$.
For any prime $\ell \geq 5$ not divisible by $\chr(k)$, 
there is a commutative diagram
$$
\xymatrix{
G_1(Q\ad{f}, \Z/\ell) \times K_0(\Spec(R) \smm, \Z/\ell) \ar[drr]^{\ttheta}
\ar[d] && \\
\Gtop_1(Q\ad{f}, \Z/\ell) \times \Ktop_0(\Spec(R) \smm, \Z/\ell)
\ar[rr]^{\phantom{XXXXXXX} \ttheta^\top} \ar[d] && \Z/\ell. \\
\Gtop_1(Q^\hen_\fm \otimes_V \overline{F}, \Z/\ell) \times \Ktop_0(\Spec(R) \smm, \Z/\ell)
\ar[urr] && \\
}
$$
\end{prop}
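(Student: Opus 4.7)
The plan is to obtain Proposition \ref{Prop1025} formally by Bott-inverting Corollary \ref{cor87}: the top square will encode the passage from algebraic to Bott-inverted theory, and the bottom triangle will encode the factorization through the geometric generic fiber of the Henselized total space.

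First I would produce a Bott element $\beta \in K_2(V, \Z/\ell)$ compatible with all rings in sight. Since $\chr(k) \nmid \ell$ and $k$ is algebraically closed, $k$ contains a primitive $\ell$-th root of unity $\zeta$. Because $x^\ell - 1$ has the simple root $\zeta$ in $k$ and $\ell$ is invertible in $V^\hen$, Hensel's Lemma lifts $\zeta$ to a primitive $\ell$-th root of unity in $V^\hen$, giving a ring map $\Z[\mu_\ell] \to V^\hen \to Q^\hen_\fm$. Pulling back the universal Bott element of $K_2(\Z[\mu_\ell], \Z/\ell)$ yields a central element $\beta$ acting compatibly on all groups in the diagram. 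The identifications $\Ktop_*(X,\Z/\ell) \cong K_*(X,\Z/\ell)\ai{\beta}$ and $\Gtop_*(X,\Z/\ell) \cong G_*(X,\Z/\ell)\ai{\beta}$ recalled above then translate Bousfield localization at $\cKU$ into inverting $\beta$.

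Next, Corollary \ref{cor87} states that the $\ttheta$ pairing fits into a commutative diagram of graded $K_*(V, \Z/\ell)$-module homomorphisms
$$
\xymatrix@C=0.6em{
G_1(Q\adi{f}, \Z/\ell) \times K_0(\Spec(R)\sm\fm, \Z/\ell) \ar[rr]^{\phantom{XXX}\ttheta} \ar[d] && K_0(R/\fm, \Z/\ell) \\
G_1(Q^\hen_\fm \otimes_V \overline{F}, \Z/\ell) \times K_0(\Spec(R)\sm\fm, \Z/\ell) \ar[urr] &&
}
$$
and moreover $K_0(R/\fm, \Z/\ell) \cong \Z/\ell$ since $R/\fm$ is algebraically closed. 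Inverting $\beta$ throughout preserves commutativity and takes each $K_*$ (resp.\ $G_*$) group to its Bott-inverted analogue, which is exactly $\Ktop_*$ (resp.\ $\Gtop_*$). This gives the bottom triangle of Proposition \ref{Prop1025}. The top square commutes by the naturality of the canonical maps $K_*(-,\Z/\ell) \to \Ktop_*(-,\Z/\ell)$ and $G_*(-,\Z/\ell) \to \Gtop_*(-,\Z/\ell)$ with respect to cup products, cap products, and localization boundary maps; these naturality properties send the pairing $\ttheta$ to $\ttheta^\top$ by definition.

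The one point requiring a little care is the commutation of Bott inversion with the colimit defining $G_1(Q^\hen_\fm \otimes_V \overline{F}, \Z/\ell)$, which is needed when we pass to $\Gtop_1(Q^\hen_\fm \otimes_V \overline{F}, \Z/\ell)$. This holds because multiplication by $\beta$ is natural with respect to the flat transition maps associated with finite extensions $V' \subseteq V''$, so inverting $\beta$ commutes with the filtered colimit of spectra (equivalently, with the filtered colimit of homotopy groups). Beyond this bookkeeping, the argument is entirely formal.
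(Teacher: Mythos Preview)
Your proposal is correct and follows essentially the same route as the paper: the paper's proof is the single sentence ``This diagram is obtained from the commutative diagrams in Corollary~\ref{Cor3} and Corollary~\ref{cor87} by inverting the action of $\beta \in K_2(V,\Z/\ell)$,'' and you have unpacked exactly this, with the top triangle coming from naturality of the localization map $K_*\to\Ktop_*$, $G_*\to\Gtop_*$ and the bottom triangle from Bott-inverting Corollary~\ref{cor87}. Your extra care about producing $\beta$ via $\Z[\mu_\ell]\to V^\hen$ and about commuting Bott inversion with the filtered colimit defining $G_*(Q^\hen_\fm\otimes_V\overline{F})$ is correct bookkeeping that the paper leaves implicit; note only that to have $\beta$ act on $G_1(Q\adi{f},\Z/\ell)$ itself (rather than on its Henselized analogue) you should either first reduce to $Q=Q^\hen_\fm$ via Proposition~\ref{prop815b}, as is done in the proof of Corollary~\ref{cor87}, or else work directly with the Bousfield localization $L_{\cKU}$, which is functorial without reference to any Bott element.
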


\begin{proof} 
This diagram is obtained from the commutative diagrams in Corollary
\ref{Cor3} and Corollary \ref{cor87} by inverting the action of $\beta \in K_2(V, \Z/\ell)$.
\end{proof}

\begin{cor} \label{Cor98a} 
With the assumptions of Proposition \ref{Prop1025}, if 
$\Gtop_1(Q^\hen_\fm \otimes_V \overline{F}, \Z/\ell) = 0$
for infinitely many 
primes $\ell \geq 5$, then $\theta^R(M,N) = 0$ for all
finitely generated $R$-modules $M$ and $N$ such that $N_\fp$ has
finite projective dimension for all $\fp \ne \fm$.
\end{cor}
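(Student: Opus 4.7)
The plan is to combine Theorem \ref{Thm1}, which identifies $\theta_\ell(M,N)$ with a value of the auxiliary pairing $\ttheta_\ell$ applied to a matrix factorization of $M$, with the commutative diagram of Proposition \ref{Prop1025} that factors $\ttheta_\ell$ through the ``topological'' pairing $\ttheta^\top$. The vanishing hypothesis $\Gtop_1(Q^\hen_\fm \otimes_V \overline{F}, \Z/\ell) = 0$ makes $\ttheta^\top$ the zero pairing, forcing $\theta^R(M,N) \equiv 0 \pmod{\ell}$. Running this for each of the infinitely many primes supplied by the hypothesis shows that the integer $\theta^R(M,N)$ is divisible by infinitely many distinct primes, hence vanishes.

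To apply Theorem \ref{Thm1} I first need to put $M$ in the form $\coker(A)$ for some matrix factorization $(A,B)$ of $f$. I would invoke Proposition \ref{prop814} to reduce to the case where $Q$ is local; this preserves the corollary's hypothesis since $Q^\hen_\fm$ depends only on the local ring $Q_\fm$ and the residue field at $\fm$ is unchanged. Next I replace $M$ by a sufficiently high syzygy $\Omega^d M$ in an $R$-free resolution: over a local Cohen--Macaulay ring, $\Omega^d M$ is MCM once $d \geq \dim R$, and the long exact Tor sequence together with the two-periodicity of Tor in high degree yields $\theta^R(\Omega M, N) = -\theta^R(M, N)$, so the replacement changes $\theta^R(M,N)$ only by a sign. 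Over a local hypersurface, every MCM module is the cokernel of some matrix factorization, so this reduction puts $M$ in the required form.

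Now fix a prime $\ell \geq 5$ as in the hypothesis; cofinitely many such primes satisfy $\ell \neq \chr(k)$, so we may restrict to an infinite subset of them with $\ell$ coprime to $\chr(k)$. Theorem \ref{Thm1} gives
$$
\theta_\ell(M,N) = \pm\, \ttheta_\ell([A]_\ell, [N']_\ell) \in \Z/\ell,
$$
where $[N'] \in K_0(\Spec(R) \smm)$ is the class of $N$. By Proposition \ref{Prop1025}, this value coincides with the image of $([A]_\ell, [N']_\ell)$ under
$$
\ttheta^\top \colon \Gtop_1(Q^\hen_\fm \otimes_V \overline{F}, \Z/\ell) \times \Ktop_0(\Spec(R)\smm, \Z/\ell) \lra \Z/\ell.
$$
Since the first factor of the source vanishes by hypothesis, $\ttheta^\top = 0$, and therefore $\theta^R(M,N) \equiv 0 \pmod{\ell}$. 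Varying $\ell$ over the infinite set of primes supplied by the hypothesis shows $\theta^R(M,N) \in \bigcap_\ell \ell\Z = \{0\}$.

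The only step requiring genuine care is the syzygy reduction to matrix-factorization form, which relies on standard facts about MCM modules over local hypersurface rings and on the multiplicative behavior of $\theta^R$ under syzygies coming from the Tor long exact sequence. I do not anticipate any serious obstacle beyond the bookkeeping of ensuring that the hypotheses of Proposition \ref{Prop1025}, Theorem \ref{Thm1}, and Proposition \ref{prop814} are preserved under the localization $Q \leadsto Q_\fm$ at every stage.
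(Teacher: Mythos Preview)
Your proof is correct and follows the same route as the paper's one-line argument: the vanishing of $\Gtop_1(Q^\hen_\fm \otimes_V \overline{F}, \Z/\ell)$ forces the bottom pairing in Proposition~\ref{Prop1025} to vanish, hence $\ttheta_\ell$ and (via Theorem~\ref{Thm1} or Corollary~\ref{Cor3}) $\theta_\ell$ vanish, so $\theta^R(M,N)$ is divisible by infinitely many primes. You have simply unpacked the reductions (localizing at $\fm$, passing to an MCM syzygy to obtain a matrix factorization) that the paper leaves implicit in its appeal to ``the Proposition.''
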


\begin{proof} The Proposition implies that  $\theta^R(M,N)$ is a multiple of $\ell$ for 
infinitely many primes $\ell$.
\end{proof}

\section{Vanishing of $\Ktop_1$}

In this section, we combine a theorem of Rosenschon-{\O}stv{\ae}r \cite[4.3]{OR}, which is an 
improvement of a theorem of Thomason \cite[4.1]{Thomason}, and a
theorem of 
Illusie \cite[2.10]{Illusie}  to
establish the vanishing of the odd-degree topological $K$-groups
with $\Z/\ell$-coefficients of the 
``algebraic Milnor fiber'', $\Spec(Q^\hen_\fm \otimes_V \overline{F})$, in certain cases.

\subsection{The Thomason-Rosenschon-{\O}stv{\ae}r  Theorem}

\begin{defn} Fix a prime $\ell$. 
The {\em mod-$\ell$ \etale cohomological dimension} of a scheme $X$, written $\cd_\ell(X) \in \N \cup \{\infty\}$,  is defined
as 
$$
\cd_\ell(X) = \sup\{i \, | \, H^i_{\et}(X, \cF) \ne 0 \text{ for some $\ell$-power torsion \etale 
sheaf $\cF$}\}.
$$
\end{defn}

If $F$ is algebraically closed (or even just separably closed), 
then $\cd_\ell(F) = 0$ 
since the \etale topology of such a field is
trivial. If $E$ is a field extension of $F$ of transcendence degree $d$, then by \cite[II.4.2.11]
{Serre} we have
$$
\cd_\ell(E) \leq \cd_\ell(F) + d.
$$
It follows that if $E$ is a field of transcendence degree $M$ over an algebraically closed field, then
\begin{equation} \label{E817}
\cd_\ell(E) \leq M.
\end{equation}

\begin{defn} For an odd prime $\ell$,
a scheme $X$ is {\em $\ell$-good} if $X$ is quasi-separated, quasi-compact and  of finite 
Krull dimension, $\ell$ is a unit of $\Gamma(X,
  \cO_X)$, and there is a uniform finite bound on the mod-$\ell$ \etale cohomological 
dimensions of all the residue fields of $X$. A commutative ring
  $A$ is {\em $\ell$-good} if $\Spec(A)$ is $\ell$-good.
\end{defn}

\begin{rem} The definition of $\ell$-good is motivated by the hypotheses of the 
  Rosenschon-{\O}stv{\ae}r Theorem. 
The correct version of $\ell$-good for $\ell = 2$ involves ``virtual \etale 
cohomological dimension''. 
\end{rem}

Note that an affine scheme $X = \Spec(A)$ is automatically separated (and hence quasi-separated) 
and quasi-compact. So, $A$ is $\ell$-good if
and only if $A$ has finite Krull dimension, $\ell$ is a unit of $A$, and there is a uniform finite 
bound on $\cd_\ell(\kappa(\fp))$ as $\fp$ ranges over all primes
ideals of $A$.

If $X$ is a separated scheme of finite type over an algebraically closed field $k$, then the 
transcendence degree of each of its residue fields is
at most $\dm(X) < \infty$, and hence we have $\cd_\ell(\kappa(x)) \leq \dm(X)$ for all $x \in 
X$. It follows that, for such $X$,  if $\ell \ne \chr(k)$, then $X$
is $\ell$-good.

We will need a generalization of this fact that involves Henselizations. Recall that given a 
commutative ring $V$, a $V$-algebra $Q$ is {\em essentially of
  finite type} over $V$ is $Q$ is the localization of a finitely generated $V$-algebra by some 
multiplicatively closed subset. 

\begin{lem} \label{lem817b}
Suppose $V$ is a Noetherian ring, $\fp$ is any prime of $V$, 
$Q$ is a $V$-algebra essentially of finite type, and $\fm$ is a maximal ideal of $Q$. Let 
$Q^\hen_\fm$ denote the
  Henselization of $Q$ at $\fm$. If $\ell \ne \chr(\kappa(\fp))$, then
$Q^\hen_\fm \otimes_V \o{\kappa(\fp)}$ is $\ell$-good, where $\o{\kappa(\fp)}$ is the 
algebraic closure of $\kappa(\fp)$. 
\end{lem}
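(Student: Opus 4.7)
The plan is to verify the three nontrivial conditions for the affine scheme $\Spec(A)$ with $A := Q^\hen_\fm \otimes_V \overline{\kappa(\fp)}$ to be $\ell$-good: that $\ell \in A^\times$, that $A$ has finite Krull dimension, and that its residue fields have uniformly bounded mod-$\ell$ \'etale cohomological dimension. The first is immediate from $\ell \ne \chr(\kappa(\fp))$.

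The main idea for the other two is to isolate the Noetherian content by factoring $V \to \overline{\kappa(\fp)}$ through $\kappa(\fp)$, writing $A = B \otimes_{\kappa(\fp)} \overline{\kappa(\fp)}$ with $B := Q^\hen_\fm \otimes_V \kappa(\fp)$. Since $\kappa(\fp) = V_\fp / \fp V_\fp$, the ring $B$ is a localization followed by a quotient of the Noetherian local ring $Q^\hen_\fm$, hence Noetherian with $\dm(B) \leq \dm(Q^\hen_\fm) < \infty$. I would then express $A$ as the filtered colimit of integral extensions $B \subseteq B \otimes_{\kappa(\fp)} L$ indexed by the finite subextensions $L$ of $\overline{\kappa(\fp)}/\kappa(\fp)$. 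Integral extensions preserve Krull dimension, and finite chains of primes descend along filtered colimits, so $\dm(A) = \dm(B) < \infty$.

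For the cohomological dimension bound, given $\fa \in \Spec(A)$, I would trace it down through the tower of contractions: let $\fb = \fa \cap B$, let $\fq^\hen$ be the corresponding prime of $Q^\hen_\fm$ (which necessarily satisfies $\fq^\hen \cap V = \fp$), and set $\fq := \fq^\hen \cap Q$. The extension $B \to A$ is integral, so $\kappa(\fa)/\kappa(\fq^\hen)$ is algebraic; by the EGA property of Henselization fibers recalled in the excerpt, $\kappa(\fq^\hen)/\kappa(\fq)$ is finite separable; and because $Q$ is essentially of finite type over $V$, the fiber $Q \otimes_V \kappa(\fp)$ is essentially of finite type over the field $\kappa(\fp)$, giving a uniform bound $\mathrm{trdeg}_{\kappa(\fp)} \kappa(\fq) \leq n$ for some integer $n$ depending only on a choice of presentation of $Q$ over $V$.

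Combining these steps, and using that $\overline{\kappa(\fp)}/\kappa(\fp)$ is algebraic, yields $\mathrm{trdeg}_{\overline{\kappa(\fp)}} \kappa(\fa) \leq n$ for every $\fa \in \Spec(A)$. Because $\overline{\kappa(\fp)}$ is algebraically closed of characteristic coprime to $\ell$, the estimate \eqref{E817} gives $\cd_\ell(\kappa(\fa)) \leq n$ uniformly, completing the verification. The main subtlety worth flagging is that $A$ itself is not Noetherian in general, so residue-field transcendence degrees are not a priori controlled by Krull dimension (as $k[[x]]$ at its generic point warns); the rescue is the separable algebraic character of Henselization fibers, which forces every $\kappa(\fa)$ to be algebraic over a residue field of the essentially-of-finite-type algebra $Q \otimes_V \kappa(\fp)$.
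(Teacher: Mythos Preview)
Your proof is correct and follows essentially the same approach as the paper: both factor $A = B \otimes_{\kappa(\fp)} \overline{\kappa(\fp)}$ with $B = Q^\hen_\fm \otimes_V \kappa(\fp)$, use integrality of $B \subseteq A$ to control Krull dimension, and bound transcendence degrees of residue fields by combining the EGA fact that Henselization fibers have algebraic residue field extensions with the essentially-of-finite-type hypothesis on $Q$ over $V$, then invoke \eqref{E817}. The only organizational difference is that the paper isolates the passage from $B$ to $B \otimes_{\kappa(\fp)} \overline{\kappa(\fp)}$ as a standalone general assertion about $k$-algebras, whereas you trace primes directly through the whole tower; the content is the same.
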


\begin{rem}
The lemma generalizes to schemes: If $S$ is a Noetherian scheme, $p: X \to S$ is a morphism 
of schemes essentially of finite type, and
$x \in X$ is any closed point, then each geometric fiber of $X^\hen_x \to S$ is $\ell$-good 
for all $\ell$ not equal to the characteristic of the fiber.
\end{rem}

\begin{proof}
Since $Q$ is essentially of finite type over $V$ and $V$ is Noetherian, $Q$ is also Noetherian. 
It follows \cite[18.6.6]{EGA4d} that $Q^\hen_\fm$ is Noetherian, and since
it is local, $\dm(Q^\hen_\fm) < \infty$.

Again using that $Q$ is
essentially of finite
type over $V$, there is a bound $M$ such that for each prime $\fq \in \Spec(Q)$, the 
transcendence degree of $\kappa(\fq)$ over $\kappa(\fq \cap V)$ is
at most $M$. 
The residue fields of $Q^\hen_\fm$ are algebraic (and separable) over the corresponding 
residue fields of $Q$ by \cite[18.6.9]{EGA4d}. 
It follows that for each prime $\fq \in \Spec(Q^\hen_\fm \otimes_V \kappa(\fp))$, the 
transcendence degree of $\kappa(\fq)$ over $\kappa(\fp)$ is at most $M$.

To simplify notation, we state and prove a more general assertion: If $k$ is any field,  $A$ is a 
commutative $k$-algebra of finite Krull dimension, and there
exists a finite bound $M$ such that the transcendence degree of $\kappa(\fq)$ over $k$ is at 
most $M$ for all $\fq \in \Spec(A)$, then $A \otimes_k \o{k}$ is
$\ell$-good for all $\ell \ne \chr(k)$. The Lemma follows from the case $k = \kappa(\fp)$ 
and $A = Q^\hen_\fm \otimes_V \kappa(\fp)$.

The field $\o{k}$ is the filtered colimit of the finite extensions $L$ of $k$ contained in it, and 
hence $A \otimes_k \o{k}$ is the filtered colimit of the
collection of rings $\{A
\otimes_k L\}$. For each $L$, each residue field of $A \otimes_k L$ is a finite extension of the 
corresponding residue field of $A$.
As with any colimit of rings, the residue field of $A \otimes_k \o{k}$ at a prime $\fq$ is the
filtered colimit of the residue fields $\fq \cap (A \otimes_k L)$. We conclude that for each $\fq 
\in \Spec(A \otimes_k \o{k})$ the residue field
$\kappa(\fq)$  is 
algebraic over $\kappa(\fq \cap A)$ and hence has transcendence degree at most $M$ over 
$k$. Since $\kappa(\fq)$ contains $\o{k}$, it has
transcendence degree at most $M$ over $\o{k}$ as well. By \eqref{E817} we have
$\cd_\ell(\kappa(\fq)) \leq M$
for all $\fq \in \Spec(A \otimes_k \o{k})$. 

Finally, since $A \subseteq A \otimes_k \o{k}$ is an integral extension, $\dm(A \otimes_k 
\o{k}) = \dm(A) < \infty$. 
\end{proof}

The following result of Rosenschon-{\O}stv{\ae}r is an improvement of a celebrated theorem 
of Thomason \cite[4.1]{Thomason}. 

\begin{thm}[Rosenschon-{\O}stv{\ae}r] \label{thm:OR}    \cite[4.3]{OR} 
If $X$ is an $\ell$-good scheme for a prime $\ell \geq 5$, there is a strongly convergent, 
right half-plane spectral sequence
$$
E_2^{p,q} \Longrightarrow \Ktop_{q-p}(X, \Z/\ell)
$$
where
$$
E_2^{p,q} = 
\begin{cases} 
H^p_\et(X, \mu_{\ell}^{\otimes i}) & \text{ if $q = 2i$ and} \\
0 & \text{ if $q$ is odd,}
\end{cases}
$$
and the differential on the $r$-th page has bi-degree $(r,r-1)$: $d_r: E_r^{p,q} \to E_r^{p+r, 
q+r-1}$.
\end{thm}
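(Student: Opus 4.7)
The plan is to build the spectral sequence as the descent spectral sequence for étale $K$-theory with $\Z/\ell$ coefficients, and then use (the main content of) the Thomason--Rosenschon--{\O}stv{\ae}r comparison to rewrite its abutment as $\Ktop_*(X,\Z/\ell) = K_*(X,\Z/\ell)[\beta^{-1}]$. Concretely, I would start by considering the presheaf of spectra on the small étale site of $X$ given by $U \mapsto \cK(U,\Z/\ell)[\beta^{-1}]$, form its étale hyperlocalization $\cK^{\et}(-,\Z/\ell)$, and set up the hyperdescent spectral sequence
\begin{equation*}
E_2^{p,q} = H^p_{\et}\bigl(X,\, \pi_q^{\sh}\,\cK^{\et}(-,\Z/\ell)\bigr) \Longrightarrow \pi_{q-p}\,\cK^{\et}(X,\Z/\ell),
\end{equation*}
where $\pi_q^{\sh}$ denotes the étale sheafification of the homotopy presheaf. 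This is the standard Brown--Gersten--Thomason machinery; because $\ell$ is a unit on $X$, the spectrum $\cK(-,\Z/\ell)[\beta^{-1}]$ is well-behaved (in particular $\cKU$-local).

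Next I would identify the $E_2$-page. Over a strictly Henselian local ring $A$ with residue characteristic prime to $\ell$, Suslin's rigidity gives $K_*(A,\Z/\ell) \cong K_*(\kappa,\Z/\ell) \cong \Z/\ell[\beta]$ with $\beta$ of degree $2$, so after inverting $\beta$ we get $\Z/\ell[\beta^{\pm 1}]$. Interpreting $\beta \in K_2(\Z[\mu_\ell],\Z/\ell)$ as corresponding (up to the Bockstein) to a primitive $\ell$-th root of unity, the homotopy sheaf in degree $2i$ is naturally $\mu_\ell^{\otimes i}$, and the odd-degree sheaves vanish. This produces the desired identification
\begin{equation*}
E_2^{p,2i} = H^p_{\et}(X, \mu_\ell^{\otimes i}), \qquad E_2^{p,2i+1} = 0,
\end{equation*}
with the conventional bi-degree $(r,r-1)$ for $d_r$.

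For the abutment, I would invoke the Thomason--Rosenschon--{\O}stv{\ae}r comparison theorem: under the $\ell$-goodness hypothesis (finite Krull dimension, $\ell$ a unit, uniform bound on $\cd_\ell$ of residue fields), the canonical map
\begin{equation*}
\cK(X,\Z/\ell)[\beta^{-1}] \longrightarrow \cK^{\et}(X,\Z/\ell)
\end{equation*}
is an equivalence. This is the deep input of the proof; Thomason's original version requires Bott-inverted $K$-theory already, and the Rosenschon--{\O}stv{\ae}r refinement packages this with the correct formulation of étale $K$-theory. Granting this, $\pi_{q-p}\cK^{\et}(X,\Z/\ell) = \Ktop_{q-p}(X,\Z/\ell)$, giving the claimed abutment.

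Finally, strong convergence and the right half-plane character come from the $\ell$-good hypothesis. The uniform bound on $\cd_\ell(\kappa(x))$ together with finite Krull dimension of $X$ gives a uniform finite bound on the étale cohomological dimension of $X$ with $\ell$-torsion coefficients (this is where the residue-field hypothesis is used: $\cd_\ell$ globally is controlled by $\cd_\ell$ on residue fields plus $\dim X$). Thus the $E_2$-page is concentrated in a finite horizontal strip, ensuring that only finitely many differentials affect each total degree and that the hyperdescent spectral sequence converges strongly. The main obstacle I expect is precisely this convergence/comparison step --- getting a clean descent spectral sequence for Bott-inverted mod-$\ell$ $K$-theory beyond the smooth quasiprojective case that Thomason originally treated, which is exactly the content of the Rosenschon--{\O}stv{\ae}r improvement.
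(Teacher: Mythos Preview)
The paper does not prove this theorem; it is quoted from the literature (Rosenschon--{\O}stv{\ae}r \cite[4.3]{OR}, building on Thomason \cite[4.1]{Thomason}) and used as a black box. There is therefore no ``paper's own proof'' to compare your proposal against.

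That said, your sketch is a reasonable outline of the actual argument in the cited references: one constructs the hyperdescent spectral sequence for the \'etale-sheafified $K$-theory presheaf, identifies the homotopy sheaves via Suslin rigidity at strictly Henselian stalks (giving $\mu_\ell^{\otimes i}$ in even degrees and zero in odd), and then invokes the comparison theorem to identify the global sections with Bott-inverted algebraic $K$-theory. The $\ell$-good hypothesis gives the finite cohomological dimension needed for strong convergence. You are right that the substantive content --- extending Thomason's comparison beyond his original hypotheses --- is precisely what Rosenschon--{\O}stv{\ae}r supply, and your proposal correctly identifies this as the point one would need to import rather than reprove. For the purposes of the present paper, however, no proof is expected: the theorem is simply cited.
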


\begin{rem} Some remarks are in order:
\begin{enumerate}
\item In their original paper, the abutment of this spectral sequence is $\pi_{q-p} L_\cKU 
\cK^B(X, \Z/\ell)$ where $\cK^B$ denotes {\em Bass's algebraic
    $K$-theory spectrum}. Since $\cK \to \cK^B$ induces an isomorphism on non-negative 
homotopy groups, the natural map $L_\cKU \cK \xra{\sim} L_\cKU \cK^B$ is
  a weak equivalence.

\item
The integer $i$ in $\mu_\ell^{\otimes i}$ is allowed to be negative. For $i < 0$, 
$\mu^{\otimes i} = (\mu_\ell^{-1})^{\otimes |i|}$ where
  $\mu^{-1}_\ell$ is the $\Z/\ell$-linear dual of $\mu_\ell$.

\item If $X$ is a scheme over an algebraically closed field of characteristic not equal to $\ell$, 
then upon choosing a primitive $\ell$-th root of unity, we may
identify $\mu_\ell^{\otimes i}$ with $\Z/\ell$, for all $i \in \Z$.

\item We have assumed $\ell \geq 5$ only to avoid some technical complications and 
because the cases $\ell = 2, 3$ will not be important for our
  purposes. But, appropriately interpreted, this Theorem remains valid for $\ell \in \{2,3\}$.
\end{enumerate}
\end{rem}

The following result gives the special case of the Theorem that we will need:

\begin{cor} \label{cor817c}
Assume $V$ is a Noetherian domain with field of fractions $F$, 
$Q$ is essentially of finite type over  $V$, the generic fiber of $\Spec(Q) \to \Spec(V)$ 
(namely, $\Spec(Q \otimes_V F) \to \Spec(F)$) is essentially smooth,
$\fm$ is a maximal ideal of $Q$, and
$\ell$ is a prime such that $\ell \geq 5$ and $\ell \ne \chr(F)$.
Then there is a strongly convergent spectral sequence
$$
E_2^{p,q} \Longrightarrow \Ktop_{q-p}(Q^\hen_\fm \otimes_V \o{F}, \Z/\ell)
$$
where 
$$
E_2^{p,q} = 
\begin{cases} 
H^p_\et(Q^\hen_\fm \otimes_V \o{F}, \mu_{\ell}^{\otimes i}) & \text{ if $q = 2i$ and} \\
0 & \text{ if $q$ is odd,}
\end{cases}
$$
and the differential on the $r$-th page has bi-degree $(r,r-1)$.
\end{cor}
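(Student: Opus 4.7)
The plan is to reduce directly to the Thomason--Rosenschon--{\O}stv{\ae}r Theorem \ref{thm:OR} applied to the affine scheme $X := \Spec(Q^\hen_\fm \otimes_V \o{F})$. Once we verify that $X$ is $\ell$-good in the sense of the definition preceding Theorem \ref{thm:OR}, the spectral sequence is immediate: the indicated $E_2$-page, bi-degree of differentials, and strong convergence to $\Ktop_{q-p}(X, \Z/\ell)$ are exactly what the theorem produces.

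First I would check the easy parts of $\ell$-goodness. Since $X$ is affine, it is automatically quasi-compact and quasi-separated. Because $\chr(F) \neq \ell$ (hypothesis) and $F \subseteq \o{F} \subseteq Q^\hen_\fm \otimes_V \o{F}$, the integer $\ell$ is a unit in $\Gamma(X, \mathcal{O}_X)$.

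The main point is to verify finite Krull dimension and the uniform bound on mod-$\ell$ \etale cohomological dimensions of residue fields. For this, I would simply invoke Lemma \ref{lem817b} with $\fp = (0) \in \Spec(V)$, which is a prime since $V$ is a domain and satisfies $\kappa(\fp) = F$, so $\chr(\kappa(\fp)) = \chr(F) \neq \ell$. The other hypotheses of that lemma ($V$ Noetherian, $Q$ essentially of finite type over $V$, $\fm$ maximal in $Q$) are precisely the hypotheses of the corollary. Thus Lemma \ref{lem817b} gives that $Q^\hen_\fm \otimes_V \o{F}$ is $\ell$-good, completing the verification.

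There is no real obstacle here since all the technical work has been carried out in Lemma \ref{lem817b}. The only subtlety worth flagging is that the essential smoothness hypothesis on the generic fiber $\Spec(Q\otimes_V F)\to\Spec(F)$ plays no role in this proof; it is retained because in the intended applications one wants to identify $\Ktop_*(Q^\hen_\fm \otimes_V \o{F}, \Z/\ell)$ with $\Gtop_*(Q^\hen_\fm \otimes_V \o{F}, \Z/\ell)$, which requires regularity of the geometric generic fiber (compare the discussion following Corollary \ref{cor87}). Finally, applying Theorem \ref{thm:OR} to $X$ yields the spectral sequence exactly as stated, and so the proof reduces to a one-line application of the lemma and the theorem.
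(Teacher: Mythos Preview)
Your proof is correct and follows exactly the paper's approach: the paper's proof is the single sentence ``This follows from Lemma \ref{lem817b} and Theorem \ref{thm:OR},'' and your expansion (taking $\fp = (0)$ in Lemma \ref{lem817b} and then invoking Theorem \ref{thm:OR}) is precisely what is intended. Your observation that the essential smoothness hypothesis on the generic fiber is not actually used here is also correct.
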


\begin{proof} This follows from Lemma \ref{lem817b} and Theorem \ref{thm:OR}.
\end{proof}

\subsection{Illusie's Theorem}

We will need to make the  following assumptions:

\begin{assumptions} \label{ass} 
Assume $(V,k,F,Q,\fm,f,\ell)$ satisfy:

\begin{enumerate}

\item $V$ is a Henselian dvr with algebraically closed residue field
  $k$ and field of fractions $F$. \label{Qass1}

\item $Q$ is a regular ring, $\fm$ is a maximal ideal of $Q$, and $f \in \fm$. \label{Qass2}

\item There is a flat, finite type map $\Spec(Q) \to \Spec(V)$ of affine schemes of relative 
dimension $n$ such that the associated map of rings sends some uniformizing parameter
  $t \in V$ to $f \in Q$.  \label{Qass3}

\item The morphism $\Spec(Q) \to \Spec(V)$ is smooth at every point except, possibly, $\fm 
\in \Spec(Q)$. Notice in particular that the generic fiber $\Spec(Q \otimes_V F) \to
  \Spec(F)$ is smooth.
\label{Qass4} 

\item The morphism $\Spec(Q) \to \Spec(V)$ is a complete intersection near $\fm$ --- that
  is, for some $g \in Q \setminus\fm$, $Q \ai{g}$ the quotient of a smooth
  $V$-algebra by a regular sequence. \label{Qass5}

\item $\ell$ is prime not equal to $2$, $3$ or $\chr(k)$. Notice this implies $\ell \ne \chr(F)$ 
too.\label{Qass6}

\end{enumerate}
\end{assumptions}

\begin{thm}[Illusie's Theorem] 
\cite[2.10]{Illusie} Under Assumptions \ref{ass}, 
$$
H_\et^j( (Q^{\hen}_\fm) \otimes_V {\overline{F}}, \Z/\ell) = 0
$$
if $j \notin \{0,n\}$, where $Q^\hen_\fm$ is the Henselization of $Q$ at $\fm$ and $\o{F}$ is 
the algebraic closure of $F$.
\end{thm}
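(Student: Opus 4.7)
The plan is to reinterpret $H^j_\et(Q^\hen_\fm \otimes_V \overline{F},\Z/\ell)$ as the stalk at a geometric point of a nearby-cycles sheaf for the trait $\Spec(V)$, and then to combine Artin's affine Lefschetz theorem for the smooth geometric generic fibre with the isolated complete-intersection structure at $\fm$ in order to concentrate the cohomology in degrees $0$ and $n$.

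Let $X = \Spec(Q^\hen_\fm)$, $s = \fm$, and choose geometric points $\bar s$ over $s$ and $\bar\eta$ over the generic point of $\Spec(V)$. By the standard description of nearby cycles over a Henselian trait (SGA 7 I, Exp.~XIII), there is an identification
$$
H^j_\et\!\left(Q^\hen_\fm \otimes_V \overline{F},\ \Z/\ell\right) \;\cong\; \bigl(R^j\Psi\,\Z/\ell\bigr)_{\bar s},
$$
so it suffices to control the stalks of $R^j\Psi\,\Z/\ell$ at $\bar s$. To show vanishing for $j > n$, I note that Assumptions \ref{ass}(\ref{Qass3})--(\ref{Qass4}) present $X_{\bar\eta}$ as a filtered colimit (with affine transition maps) of smooth affine $\overline{F}$-schemes of dimension $n$. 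Since \etale cohomology with torsion coefficients commutes with such colimits, Artin's affine Lefschetz theorem applied term-by-term gives $H^j_\et(X_{\bar\eta}, \Z/\ell) = 0$ for $j > n$.

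For the middle-range vanishing $0 < j < n$, I would use Assumption \ref{ass}(\ref{Qass4}) to the effect that $\Spec(Q) \to \Spec(V)$ is smooth away from $\fm$, so smooth base change forces $R^j\Psi\,\Z/\ell = 0$ for $j > 0$ on $X_s \setminus \{\bar s\}$; equivalently, the vanishing-cycles complex $R\Phi\,\Z/\ell$ is supported on $\bar s$ alone. Using Assumption \ref{ass}(\ref{Qass5}) to present $Q$ locally as $P/(g_1, \dots, g_r)$ with $P$ smooth over $V$ and $(g_1, \dots, g_r)$ a regular sequence, one then applies the \etale analogue of Milnor's theorem that the Milnor fibre of an isolated complete-intersection singularity is $(n-1)$-connected, which yields concentration of $R\Phi\,\Z/\ell$ at $\bar s$ in degree $n$; hence $(R^j\Psi\,\Z/\ell)_{\bar s} = 0$ for $0 < j < n$.

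The main obstacle is precisely this last step: the \etale concentration of vanishing cycles in middle degree for an isolated complete-intersection singularity. This is the genuine content of \cite[2.10]{Illusie}; the argument relies on the full SGA 7 nearby/vanishing-cycles formalism, combined with the complete-intersection presentation to reduce the local computation to known connectivity estimates for the Milnor fibre of a hypersurface (or iterated hypersurface) singularity, and on a careful control of the action of $\mathrm{Gal}(\overline{F}/F)$ on the vanishing cycles. The hypotheses $\ell \ne 2,3,\chr(k)$ enter when invoking Artin--Grothendieck purity and avoiding small-prime pathologies in the localization spectral sequence for $R\Psi$.
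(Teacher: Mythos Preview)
The paper does not prove this statement at all; it is quoted verbatim from \cite[2.10]{Illusie} and used as a black box. So there is nothing in the paper to compare your argument against.

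As a standalone sketch, your outline is the standard one: identify the groups in question with stalks of nearby cycles, use smoothness away from $\fm$ to confine $R\Phi$ to the closed point, and then appeal to the concentration-in-middle-degree result for isolated complete-intersection singularities. You yourself correctly flag that the last step is the entire content of Illusie's theorem, so your proposal is less a proof than a restatement of what needs to be shown together with the correct reduction to it. That is fine as context, but it is not an independent argument.

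One correction: your final sentence misattributes the restriction $\ell \ne 2,3$. In the paper these small primes are excluded in Assumption~\ref{ass}(\ref{Qass6}) only because of the multiplicative structure on $K$-theory with $\Z/\ell$ coefficients (the mod-$\ell$ Moore spectrum fails to be a ring spectrum for $\ell \in \{2,3\}$); Illusie's vanishing result itself requires only $\ell \ne \chr(k)$. Purity and the nearby-cycles formalism are insensitive to $\ell = 2$ or $3$.
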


\begin{rem}
Note that Illusie's Theorem is the analogue of Milnor's Theorem, stating that the Milnor fiber 
of an analytic isolated singularity is homotopy equivalent to a  bouquet of
$n$-dimensional spheres. 
\end{rem}

\begin{cor} \label{cor89}
Under Assumptions \ref{ass}, if $n$ is even, then
$$
\Ktop_1(Q^\hen_\fm \otimes_V \o{F}, \Z/\ell)  = 0. 
$$
\end{cor}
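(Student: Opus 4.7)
The plan is to combine the Thomason--Rosenschon-{\O}stv{\ae}r spectral sequence (Corollary \ref{cor817c}) with Illusie's Theorem to force the relevant $E_2$ terms to vanish. First, I would verify that the hypotheses of Corollary \ref{cor817c} are met: Assumptions \ref{ass} give $V$ Henselian (in particular Noetherian), $Q$ essentially of finite type over $V$ with smooth generic fiber, $\fm$ maximal in $Q$, and $\ell \ge 5$ with $\ell \ne \chr(F)$. Hence there is a strongly convergent spectral sequence
\[
E_2^{p,q} \Longrightarrow \Ktop_{q-p}(Q^\hen_\fm \otimes_V \o{F}, \Z/\ell)
\]
with $E_2^{p,q} = H^p_\et(Q^\hen_\fm \otimes_V \o{F}, \mu_\ell^{\otimes i})$ when $q = 2i$, and $E_2^{p,q}=0$ when $q$ is odd.

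Next, since $\o{F}$ is algebraically closed and $\ell$ is invertible in $\o{F}$, a choice of primitive $\ell$-th root of unity identifies $\mu_\ell^{\otimes i} \cong \Z/\ell$ as \etale sheaves on $\Spec(Q^\hen_\fm \otimes_V \o{F})$ for every $i \in \Z$. Illusie's Theorem then gives
\[
E_2^{p,q} = H^p_\et(Q^\hen_\fm \otimes_V \o{F}, \Z/\ell) = 0
\quad \text{unless } q \text{ is even and } p \in \{0, n\}.
\]

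Finally, I would read off the $\Ktop_1$ contributions from this vanishing pattern. A nonzero contribution to $\Ktop_1$ requires an $E_\infty$-term (equivalently, a surviving $E_2$-term, but it suffices to kill $E_2$) on the line $q - p = 1$ with $q$ even and $p \in \{0,n\}$. If $p = 0$, then $q = 1$ is odd, so $E_2^{0,1} = 0$. If $p = n$, then $q = n+1$; since $n$ is even, $q$ is odd and $E_2^{n,n+1} = 0$. No other $(p,q)$ contribute, so the abutment $\Ktop_1(Q^\hen_\fm \otimes_V \o{F}, \Z/\ell)$ vanishes.

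There is really no main obstacle: the work has all been done in assembling the spectral sequence (Corollary \ref{cor817c}, which rests on Lemma \ref{lem817b} and Theorem \ref{thm:OR}) and in citing Illusie; the corollary is then a parity argument. The only small thing to check is the identification $\mu_\ell^{\otimes i} \cong \Z/\ell$ over the algebraically closed base $\o{F}$, which uses $\ell \ne \chr(F)$ from Assumption \ref{ass}\eqref{Qass6}.
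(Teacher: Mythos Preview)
Your proposal is correct and follows essentially the same route as the paper: invoke Corollary \ref{cor817c} to obtain the spectral sequence, use $\mu_\ell^{\otimes i}\cong \Z/\ell$ over the algebraically closed $\o F$, apply Illusie's Theorem to leave only the columns $p\in\{0,n\}$ with $q$ even, and observe that for $n$ even none of these hit the line $q-p=1$. Your write-up is in fact slightly more explicit than the paper's about verifying the hypotheses of Corollary \ref{cor817c} and about the parity check, but the argument is the same.
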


\begin{proof} 
The assumptions allow us to apply Corollary \ref{cor817c}, giving a strongly convergent 
spectral sequence 
$$
E_2^{p,q} \Longrightarrow \Ktop_{q-p}(Q \otimes_V \o{F}, \Z/\ell)
$$
where 
$$
E_2^{p,q} = 
\begin{cases} 
H^p_\et(Q^\hen_\fm \otimes_V \o{F}, \mu_{\ell}^{\otimes i}) & \text{ if $q = 2i$ and} \\
0 & \text{ if $q$ is odd}
\end{cases}
$$
and the differential on the $r$-th page has bidegree $(r,r-1)$.
Since $\o{F}$ is algebraically closed, $\mu_\ell \cong \Z/\ell$ (non-canonically)
and thus Illusie's Theorem applies to give that the only non-zero $E_2$-terms are 
$E_2^{n,2i}$ and $E_2^{0,2i}$. Since $n$ is even, these terms only contribute to the even degree part of $K_*^\top$.
\end{proof}

\begin{rem}
The proof also shows that, when $n$ is even, there exists an exact sequence 
$$
0 \to H^n_\et(Q^\hen_\fm \otimes_V \o{F}, \Z/\ell) \to  
\Ktop_0(Q^\hen_\fm \otimes_V \o{F}, \Z/\ell) \to
H^0_\et(Q^\hen_\fm \otimes_V \o{F}, \Z/\ell)   \to 0
$$
Similarly, when $n$ is odd, there exists an exact sequence 
$$
\begin{aligned}
0 \to 
\Ktop_0(Q^\hen_\fm \otimes_V \o{F}, & \Z/\ell) \to 
H^0_\et(Q^\hen_\fm \otimes_V \o{F}, \Z/\ell)  \\
& \to 
H^n_\et(Q^\hen_\fm \otimes_V \o{F}, \Z/\ell) \to
\Ktop_1(Q^\hen_\fm \otimes_V \o{F}, \Z/\ell) \to 0.
\end{aligned}
$$
\end{rem}

\begin{thm} \label{MainThm}
If conditions \eqref{Qass1}--\eqref{Qass5} of Assumptions \ref{ass} hold and $n$ is even, 
then $\theta^R(M,N) = 0$ for all finitely generated $R$-modules $M$ and $N$, where $R = Q/f$.
\end{thm}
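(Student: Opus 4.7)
The plan is to assemble the pieces developed in the preceding sections. By hypothesis (1), $V$ is a Henselian dvr with algebraically closed residue field $k$; by (3), $Q$ is finite type over $V$ and $\fm$ lies over the closed point of $\Spec(V)$. Thus $R/\fm = Q/\fm$ is a finitely generated extension of the algebraically closed field $k$ that is itself a field, so by Hilbert's Nullstellensatz $R/\fm = k$ is algebraically closed. This verifies the residue field hypothesis required by Proposition \ref{Prop1025}.

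Next, I would fix a prime $\ell$ satisfying condition (6) of Assumptions \ref{ass}, that is, $\ell \geq 5$ and $\ell \ne \chr(k)$; there are infinitely many such primes. All of conditions (1)--(6) of Assumptions \ref{ass} then hold, so Corollary \ref{cor89} (Illusie's Theorem combined with the Rosenschon-{\O}stv{\ae}r spectral sequence) applies to give
\[
\Ktop_1(Q^\hen_\fm \otimes_V \o{F}, \Z/\ell) = 0.
\]
Because of condition (4), the generic fiber $\Spec(Q \otimes_V F) \to \Spec(F)$ is smooth, hence $Q \otimes_V F$ is regular. The Henselization $Q^\hen_\fm$ is flat over $Q$, so $Q^\hen_\fm \otimes_V F$ is a localization (at $\fm^\hen$) of an essentially smooth $F$-algebra, and therefore regular. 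Writing $Q^\hen_\fm \otimes_V \o{F}$ as the filtered colimit of $Q^\hen_\fm \otimes_V F'$ over finite extensions $F \subseteq F' \subseteq \o{F}$, each with flat transition maps and each regular Noetherian, the discussion at the end of Section 4 gives $G_n \cong K_n$ for this colimit. Consequently
\[
\Gtop_1(Q^\hen_\fm \otimes_V \o{F}, \Z/\ell) \cong \Ktop_1(Q^\hen_\fm \otimes_V \o{F}, \Z/\ell) = 0
\]
for every $\ell \geq 5$ with $\ell \ne \chr(k)$.

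Applying Corollary \ref{Cor98a} (whose hypotheses are exactly those of Proposition \ref{Prop1025}, all verified above), we conclude $\theta^R(M,N) = 0$ for every pair of finitely generated $R$-modules $M, N$ such that $N_\fp$ has finite projective dimension for all $\fp \ne \fm$. To dispose of this latter condition, observe that by (4) the morphism $\Spec(Q) \to \Spec(V)$ is smooth away from $\fm$, so the closed fiber $\Spec(R) = \Spec(Q/f)$ is regular away from $\fm$. Thus for every prime $\fp$ of $R$ with $\fp \ne \fm$, the local ring $R_\fp$ is regular, and every finitely generated $R_\fp$-module has finite projective dimension. The vanishing of $\theta^R(M,N)$ therefore holds unconditionally.

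I do not expect any serious obstacle; the statement follows by chaining together Corollary \ref{Cor98a}, Corollary \ref{cor89}, and the identification $\Gtop \cong \Ktop$ on the ``algebraic Milnor fiber''. The only point that deserves verification is the last identification, which rests on the smoothness of the generic fiber in condition (4); without it, one would have to work with $G$-theory of a possibly singular non-Noetherian ring, for which the spectral sequence of Rosenschon-{\O}stv{\ae}r is not directly available.
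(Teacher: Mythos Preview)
Your proof is correct and follows essentially the same route as the paper's: invoke Corollary \ref{cor89} for the vanishing of $\Ktop_1$ of the algebraic Milnor fiber, identify $\Ktop_1$ with $\Gtop_1$ via the filtered-colimit-of-regular-rings argument enabled by condition (4), and conclude by Corollary \ref{Cor98a}. You spell out more explicitly than the paper does why $Q/\fm$ is algebraically closed and why the finite-projective-dimension hypothesis on $N$ is automatic, but these are details the paper simply takes as evident.
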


\begin{proof} 
The only singular point of $R/f$ is $\fm$ and hence $\theta^R(M,N)$ is defined for all finitely 
generated $R$-modules.

The hypotheses ensure that $Q^\hen_\fm \otimes_V \o{F}$ is a filtered colimit
  of regular (Noetherian) rings with flat transition maps, and so
$$
\Ktop_1(Q^\hen_\fm \otimes_V \o{F}, \Z/\ell) \cong
\Gtop_1(Q^\hen_\fm \otimes_V \o{F}, \Z/\ell).
$$
The theorem is thus an immediate consequence of Corollaries \ref{Cor98a} and \ref{cor89}. 
\end{proof}

\begin{cor} \label{CorMain} 
Let $V$ be a Dedekind domain, $\fn$ be a maximal ideal of $V$ such that $V/\fn$ is 
a perfect field,  and $Q$ be a regular, flat $V$-algebra of finite type.
Assume the singular locus of the morphism $\Spec(Q) \to \Spec(V)$ is a finite set 
$\{\fm_1, \dots, \fm_1\}$ of maximal ideals of $Q$ that lie over $\fn$ and 
that the morphism $\Spec(Q) \to \Spec(V)$ is a complete intersection in an open neighborhood of each of the $\fm_i$'s.

Then $R := Q/f$ is a hypersurface with only isolated singularities and, 
if $\dm(R)$ is even, 
$$
\theta^R(M,N) = 0
$$ 
for all finitely generated $R$-modules $M$ and $N$.
\end{cor}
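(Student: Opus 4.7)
The plan is to reduce to Theorem~\ref{MainThm} by replacing the data $(V,\fn,Q)$ with a strict-Henselized, singular-point-localized version satisfying Assumptions~\ref{ass}, and then transferring the vanishing back. First I would note that $R = Q/f$ has isolated singularities exactly at $\{\fm_1,\dots,\fm_r\}$: on the smooth locus of $\Spec(Q)\to\Spec(V)$ the element $f$ is a local equation for a smooth divisor, so $\Spec(R)$ is regular there, while at each $\fm_i$ the quotient $R$ is singular because $Q\to V$ is not smooth. Hence for finitely generated $R$-modules $M$ and $N$, the modules $\Tor_j^R(M,N)$ have support in $\{\fm_1,\dots,\fm_r\}$ for $j \gg 0$, giving them finite length and making $\theta^R(M,N)$ well-defined. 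Since length decomposes over the support, $\theta^R(M,N) = \sum_i \theta^{R_{\fm_i}}(M_{\fm_i},N_{\fm_i})$, so it suffices to show each local summand vanishes; fix an index $i$.

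Next, let $\tilde V := V^{\sh}_\fn$ be the strict Henselization of $V$ at $\fn$. Because $V/\fn$ is perfect, its separable closure equals its algebraic closure, and $\tilde V$ is a Henselian Noetherian dvr with algebraically closed residue field $\overline{V/\fn}$. Form $\tilde Q := Q \otimes_V \tilde V$. Since $V \to \tilde V$ is ind-\'etale, $\tilde Q$ is a regular, flat, finite-type $\tilde V$-algebra of relative dimension $n$. As $Q/\fm_i$ is a finite separable extension of $V/\fn$, $\tilde Q/\fm_i\tilde Q \cong (Q/\fm_i)\otimes_{V/\fn}\overline{V/\fn}$ decomposes as a finite product of copies of $\overline{V/\fn}$; so the maximal ideals $\tilde\fm_{i,1},\dots,\tilde\fm_{i,s_i}$ of $\tilde Q$ lying over $\fm_i$ are finite in number, each with residue field $\overline{V/\fn}$.

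Pick $\tilde\fm := \tilde\fm_{i,j}$, then choose $g \in \tilde Q \setminus \tilde\fm$ contained in every other singular maximal ideal of $\tilde Q\to\tilde V$. Replacing $\tilde Q$ by $\tilde Q' := \tilde Q[1/g]$ isolates $\tilde\fm$ as the unique singular point, while preserving regularity, flatness, finite type, relative dimension $n$, and the local complete intersection property (all preserved by flat base change and Zariski localization). Thus $(\tilde V, \tilde Q', \tilde\fm, \tilde f)$ satisfies Assumptions~\ref{ass}, and Theorem~\ref{MainThm} yields $\theta^{\tilde R'}(-,-) = 0$ on all finitely generated $\tilde R'$-modules, where $\tilde R' := \tilde Q'/\tilde f$. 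To descend, I would use that $R_{\fm_i} \to \tilde R'_{\tilde\fm}$ is a flat local homomorphism with $\fm_i\tilde R'_{\tilde\fm} = \tilde\fm \tilde R'_{\tilde\fm}$; a composition series of a finite-length $R_{\fm_i}$-module base-changes to a filtration whose factors are the residue field of $\tilde R'_{\tilde\fm}$, hence simple, giving $\length_{\tilde R'_{\tilde\fm}}(T \otimes \tilde R'_{\tilde\fm}) = \length_{R_{\fm_i}}(T)$. Applied to the Tors, this extends Proposition~\ref{prop814} across the residue-field extension $Q/\fm_i \hookrightarrow \overline{V/\fn}$ and yields $\theta^{\tilde R'}(M \otimes_R \tilde R', N \otimes_R \tilde R') = \theta^{R_{\fm_i}}(M_{\fm_i},N_{\fm_i}) = 0$, completing the reduction.

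The main obstacle is the bookkeeping: Theorem~\ref{MainThm} assumes a single singular point and an algebraically closed residue field, whereas the corollary allows several singular points and a merely perfect residue field. Strict Henselization plus a Zariski localization around one $\tilde\fm_{i,j}$ handles both issues simultaneously, and the nontrivial residue-field extension introduced by strict Henselization is absorbed by the length-preservation argument that refines Proposition~\ref{prop814}. Once these reductions are in place, the desired vanishing follows immediately from Theorem~\ref{MainThm}.
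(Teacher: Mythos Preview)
Your proposal is correct and follows essentially the same approach as the paper: decompose $\theta^R$ as a sum over the singular points, pass to the strict Henselization of $V$ at $\fn$ (using perfectness of $V/\fn$ to get an algebraically closed residue field), localize around a single preimage of $\fm_i$ to meet Assumptions~\ref{ass}, apply Theorem~\ref{MainThm}, and descend via a flat local length comparison. The only cosmetic difference is that you exploit the ind-\'etale nature of $V \to V^{\sh}_\fn$ to get $\fm_i \tilde R'_{\tilde\fm} = \tilde\fm \tilde R'_{\tilde\fm}$ and hence exact length preservation, whereas the paper records the more general formula $\len_{R'}(T \otimes_R R') = \len_R(T) \cdot \len_{R'}(R'/\fm R')$ and divides out the multiplicative factor.
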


\begin{proof} 
We may assume $V$ is local and hence a dvr. Then $R$ is the hypersurface $Q/f$, where $f$ is the image in $Q$ of a
chosen uniformizing parameter $t$ of $V$. 
The non-regular locus of $R$ is $\{\fm_1, \dots, \fm_l\}$ and 
we have
$$
\theta^R(M,N) = \sum_{i=1}^l \theta^{R_{\fm_i}}(M_{\fm_i}, N_{\fm_i})
$$
for all finitely generated $R$-modules $M$ and $N$. It suffices to prove $\theta^{R_{\fm_i}} \equiv 0$, for all $i$, and thus 
upon replacing $\Spec(Q)$ by a 
sufficiently small affine open neighborhood of each $\fm_i$, we may assume
that $l = 1$, that $\fm := \fm_1$ is the only singular point of the morphism $\Spec(Q) \to \Spec(V)$, 
and that this morphism is a complete intersection.

Let $V^\sh_\fn$ denote the {\em strict Henselization} of $V$ at its maximal ideal $\fn$. 
Recall from \cite[\S 18.8]{EGA4d} that there is a faithfully flat local ring map $V \to
V^\sh_\fn$, that $\fn^\sh := \fn V^\sh_\fn$ is the maximal ideal of $V^\sh_\fn$, and 
that the induced map on residue fields $V/\fn \into V^\sh_\fn/\fn^\sh$ is a (initially chosen) 
separable closure of $V/\fn$. Since we assume $V/\fn$
is perfect, the residue field of $V^\sh_\fn$ is, in fact, algebraically closed.

Set  $Q' = Q \otimes_V V^\sh_\fn$ and $R' := R \otimes_V V^\sh_\fn = Q'/f'$ where $f'$ is 
the image of $f$ under $Q \to Q'$. 
The fiber of $\Spec(Q') \to \Spec(Q)$ over $\fm$ is 
$$
Q/\fm \otimes_V V^\sh_\fn \cong 
Q/\fm \otimes_{V/\fn} V^\sh/\fn^\sh = Q/\fm \otimes_k \o{k}.
$$
Since $Q$ has finite type over $V$, $k \into Q/\fm$ is a finite field extension. This shows that 
the fiber of 
$\Spec(Q') \to \Spec(Q)$ over $\fm$ consists of a finite number of maximal ideals $\fm'_1, 
\dots, \fm'_n$ of $Q'$. 
Since $\Spec(Q) \sm \fm \to \Spec(V)$ is smooth, so is $\Spec(Q) \sm \{\fm'_1, \dots, 
\fm'_n\} \to \Spec(V^\sh)$.
For each $i$, upon replacing $Q'$ by a suitably small affine open neighborhood of $\fm'_{i}$, 
conditions (1) -- (5) of Assumptions \ref{ass} are met, and thus we have
$\theta^{R'_{\fm_{i}}}(-,-) \equiv 0$  by the Theorem. 

To prove $\theta^R$ vanishes, it suffices to prove the following more general fact: 
if $R$ is a hypersurface ring having only one singular point, $\fm$, and 
there is a flat, local ring map $(R_\fm, \fm) \to (R', \fm')$ such that $R'$ is a local hypersurface 
with an isolated singularity such that  
$\theta^{R'} \equiv 0$, then $\theta^R \equiv 0$. 
To prove this, observe that if  $T$ is a finitely generated $R$-module supported on $\fm$, then 
$$
\len_{R'}(T \otimes_R R') = \len_R(T) \cdot \len_{R'}(R'/\fm_R R').
$$
It follows that for any pair of finitely generated $R$-modules $M$ and $N$, we have
$$
\begin{aligned}
\len_{R'} \Tor_{R'}^i(M \otimes_R R',N \otimes_R R') 
& = \len_{R'} \left(\Tor_R^i(M,N)  \otimes_R R'\right)  \\
& = \len_R(\Tor_R^i(M,N)) \cdot \len_{R'}(R'/\fm_R R') \\
\end{aligned}
$$
for $i \gg 0$, 
and hence
$$
\theta^R(M,N) = \frac{\theta^{R'}(M \otimes_R R', N \otimes_R R')}{\len_{R'}(R'/\fm_R R')} 
= 0.
$$
\end{proof}

Theorem \ref{introthm} from the Introduction follows quickly from the previous Corollary by taking $V = k[t]$: Since $f$ is a non-zero-divisor, the map of
$k$-algebras $k[t] \to Q$ sending $t$ to $f$ is flat.
Since $k$ is perfect, $Q$ is smooth over $k$ and
hence $Q[t]$ is smooth over $k[t]$. 
The morphism 
$\Spec(Q) \to \Spec(V) = \A^1_k$ is thus a complete intersection because $Q \cong Q[t]/(f-t)$.

We can extend our main vanishing result slightly by allowing localizations:

\begin{cor} \label{Cor114}
For any ring $R$ as in Corollary \ref{CorMain}, $\theta^{S^{-1}R}(M,N) = 0$  for 
any multiplicatively closed set $S$ disjoint from the singular locus
  of $R$ and any pair of finitely generated $S^{-1}R$-modules $M$ and $N$. 
\end{cor}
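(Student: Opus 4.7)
The plan is to reduce the statement to Corollary \ref{CorMain} by decomposing $\theta^{S^{-1}R}$ into local contributions at each singular maximal ideal of $S^{-1}R$ and identifying each summand with a local theta invariant of $R$ that is already known to vanish.

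First I would observe that $S^{-1}R$ is again a hypersurface with isolated singularities. Writing $R = Q/f$ as in Corollary \ref{CorMain}, we have $S^{-1}R = (S^{-1}Q)/f$ with $S^{-1}Q$ still regular. Since $S$ is disjoint from the singular locus $\{\fm_1, \ldots, \fm_l\}$ of $R$, each $\fm_i$ survives as a maximal ideal $\fm_i' := \fm_i \cdot S^{-1}R$, and these exhaust the singular maximal ideals of $S^{-1}R$. Moreover the canonical map $R_{\fm_i} \to (S^{-1}R)_{\fm_i'}$ is an isomorphism.

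Next, for any pair $(M, N)$ of finitely generated $S^{-1}R$-modules, the $\Tor$ modules over $S^{-1}R$ are eventually of finite length and are supported on $\{\fm_1', \ldots, \fm_l'\}$, since $S^{-1}R$ has only isolated singularities. Additivity of length under localization at the maximal ideals of the support --- the same decomposition used at the beginning of the proof of Corollary \ref{CorMain} --- gives
$$
\theta^{S^{-1}R}(M, N) \;=\; \sum_{i=1}^{l} \theta^{(S^{-1}R)_{\fm_i'}}(M_{\fm_i'}, N_{\fm_i'}).
$$
Invoking Proposition \ref{prop814} for the isomorphism $R_{\fm_i} \xra{\cong} (S^{-1}R)_{\fm_i'}$ identifies each summand with $\theta^{R_{\fm_i}}(M_{\fm_i'}, N_{\fm_i'})$.

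Finally, the proof of Corollary \ref{CorMain} already establishes that $\theta^{R_{\fm_i}}$ vanishes identically on all pairs of finitely generated $R_{\fm_i}$-modules: this is the content of the ``more general fact'' at the end of that proof, which propagates vanishing of $\theta$ along the flat local map from $R_{\fm_i}$ to its strict Henselization via Theorem \ref{MainThm}. Summing zero contributions yields $\theta^{S^{-1}R}(M, N) = 0$. I do not anticipate a serious obstacle here: the argument is a routine local-to-global reduction, with all substantive content already absorbed into Corollary \ref{CorMain} and Proposition \ref{prop814}.
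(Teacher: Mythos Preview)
Your argument is correct and reaches the same conclusion, but the route differs from the paper's. The paper lifts the $S^{-1}R$-modules $M$ and $N$ to finitely generated $R$-modules $\tilde{M}$, $\tilde{N}$ with $\tilde{M}\otimes_R S^{-1}R \cong M$, $\tilde{N}\otimes_R S^{-1}R \cong N$, and then observes that $S^{-1}\Tor^R_i(\tilde{M},\tilde{N}) \cong \Tor^{S^{-1}R}_i(M,N)$; since for $i\gg 0$ the Tor modules are supported on the singular locus and $S$ misses it, the localization does nothing, giving $\theta^{S^{-1}R}(M,N)=\theta^R(\tilde{M},\tilde{N})=0$ directly from the \emph{statement} of Corollary~\ref{CorMain}. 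You instead localize further at each singular prime, use the isomorphism $R_{\fm_i}\cong (S^{-1}R)_{\fm_i'}$, and invoke the \emph{proof} of Corollary~\ref{CorMain} for the vanishing of each $\theta^{R_{\fm_i}}$. Your approach avoids the (easy but still needed) lifting of modules along a localization; the paper's approach is slightly cleaner in that it uses only the stated conclusion of Corollary~\ref{CorMain} rather than an internal step of its proof. Both are short and sound.
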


\begin{proof} More generally, suppose $R$ is any hypersurface whose non-regular locus 
is $\{\fm_1, \dots, \fm_t\} \subseteq \mSpec(R)$ and $\theta^R 
\equiv 0$, and  let  $R' = S^{-1}R$ for any multiplicatively closed set
$S$ 
with $S \cap \fm_i = \emptyset$ for
all $i$. We claim that $\theta^{R'}$ too. 
It is clear $R'$ is also a hypersurface with isolated singularities. 
Given finitely generated $R'$-modules $M$ and $N$, there exist finitely generated $R$-
modules $\t{M}$ and $\t{N}$
such that $\t{M} \otimes_R R' \cong M$ and $\t{N} \otimes_R R' \cong N$. For all $i$ we have
$$
S^{-1} \Tor^{R}_i(\t{M},\t{N}) \cong
\Tor^{R'}_i(M,N),
$$
and, for $i \gg 0$, $\Tor^{R}_i(\t{M},\t{N})$ is supported on $\{\fm_1, \dots, \fm_t\}$ so that
$S^{-1} \Tor^{R}_i(\t{M},\t{N}) \cong \Tor^{R}_i(\t{M},\t{N})$. It follows that 
$\theta^{R'}(M,N) = \theta^R(\t{M}, \t{N}) = 0$.
\end{proof}

In particular, Corollary \ref{Cor114} justifies Example \ref{IntroEx} in the Introduction.

\bibliographystyle{amsplain}

\providecommand{\bysame}{\leavevmode\hbox to3em{\hrulefill}\thinspace}
\providecommand{\MR}{\relax\ifhmode\unskip\space\fi MR }
\providecommand{\MRhref}[2]{%
  \href{http://www.ams.org/mathscinet-getitem?mr=#1}{#2}
}
\providecommand{\href}[2]{#2}


\end{document}